\renewcommand{\Re}{{\operatorname{Re}\,}}
\renewcommand{\Im}{{\operatorname{Im}\,}}
\renewcommand{\epsilon}{\varepsilon}
\newcommand{\szego}{Szeg\H{o} }
\newcommand{\kahler}{K\"ahler }
\newcommand{\PP}{{\mathbb P}}
\newcommand{\R}{{\mathbb R}}
\newcommand{\C}{{\mathbb C}}
\newcommand{\Z}{{\mathbb Z}}
\newcommand{\Ss}{{\mathbb S}}
\newcommand{\Hh}{{\mathbb H}}
\newcommand{\CP}{\C\PP}
\newcommand{\dbar}{\bar\partial}
\newcommand{\ddbar}{\partial\dbar}
\newcommand{\half}{{\textstyle \frac 12}}
\renewcommand{\phi}{\varphi}
\newcommand{\acal}{\mathcal{A}}
\newcommand{\bcal}{\mathcal{B}}
\newcommand{\ccal}{\mathcal{C}}
\newcommand{\dcal}{\mathcal{D}}
\newcommand{\hcal}{\mathcal{H}}
\newcommand{\lcal}{\mathcal{L}}
\newcommand{\ocal}{\mathcal{O}}
\newcommand{\pcal}{\mathcal{P}}
\newcommand{\scal}{\mathcal{S}}
\newcommand{\ucal}{\mathcal{U}}
\newtheorem{theo}{{\sc Theorem}}[section]
\newtheorem{maintheo}{{\sc Theorem}}
\newtheorem{conj}{{\sc Conjecture}}
\newtheorem{maindefn}{{\sc Definition}}
\newtheorem{mainprop}{{\sc Proposition}}
\newtheorem{cor}[theo]{{\sc Corollary}}
\newtheorem{maincor}[maintheo]{{\sc Corollary}}
\newtheorem{lem}[theo]{{\sc Lemma}}
\newtheorem{prop}[theo]{{\sc Proposition}}
\newenvironment{rem}{\medskip\noindent{\it Remark:\/} }{\medskip}
\newtheorem{defn}[theo]{{\sc Definition}}
\title[Pluri-potential  theory on Grauert tubes   ]
{Pluri-potential  theory on Grauert tubes of  real analytic
Riemannian manifolds, I  }
\address{Department of Mathematics, Northwestern  University, Evanston, IL 60208, USA}
\email{zelditch@math.northwestern.edu}
\thanks{Research partially supported by NSF grant  \# DMS-0904252.}
\date{\today}
\begin{document}
\begin{abstract} Analogues of the some basic  notions of pluri-potential theory on strictly pseudo-convex domains in $\C^m$ are developed for
Grauert tubes $M_{\tau}$  in complexifications of real analytic
Riemannian manifolds $(M, g)$. In particular, the normalized
logarithm of the  complexified spectral projector
$\Pi_{I_{\lambda}}^{\C}(\zeta, \bar{\zeta})$ is the  analogue of
the Siciak-Zaharjuta extremal pluri-subharmonic function. It is
shown that $\frac{1}{\lambda} \log \Pi_{I_{\lambda}}^{\C}(\zeta,
\bar{\zeta}) \to \sqrt{\rho}(\zeta)$, where $\sqrt{\rho}$ is the
Grauert tube function. We give several applications to analytic
continuations of eigenfunctions: to norm estimates, triple product
integrals and to complex nodal sets.

\end{abstract}
 \maketitle


In the study of eigenfunctions  of the Laplacian $\Delta_g$ on a
real analytic Riemannian manifold $(M, g)$ of dimension $m$, it is
often useful to analytically continue an orthonormal basis
$\{\phi_{\lambda_j}\}$   of eigenfunctions,
$$\Delta_g \phi_{\lambda_j} = \lambda_j^2\; \phi_{\lambda_j}, \;\;\;
\langle \phi_{\lambda_j}, \phi_{\lambda_k} \rangle = \delta_{jk},
\;\; (\lambda_0 = 0 < \lambda_1 \leq \lambda_2 \leq \cdots),
$$ into the
complexification $M_{\C}$ of $M$. As recalled in \S \ref{GRAUERT},
eigenfunctions admit analytic continuations
$\phi_{\lambda_j}^{\C}$ to a maximal uniform 'Grauert tube'
\begin{equation} \label{MTAU} M_{\tau} = \{\zeta \in   M_{\C}, \sqrt{\rho}(\zeta) < \tau
\} \end{equation}  independent of $\lambda_j$, where the radius is
measured by the Grauert tube function $\sqrt{\rho} (\zeta) $
corresponding to $g$ (see \S \ref{GRAUERT}:  \cite{LS1,GS1}).
Analytic continuation of eigenfunctions and spectral projections
(\ref{CXSPMa})-(\ref{TCXSPM})  to Grauert tubes  have applications
to nodal geometry \cite{DF,Lin,Z3,TZ,RZ}, analytic wave front sets
\cite{Leb,GLS}, tunnelling estimates \cite{HS,Mar},
Paley-Wiener theorems \cite{G}, invariant triple products
\cite{Sar,BR}, random waves \cite{Z2} and Agmon estimates for
eigenfunctions in the classically forbidden region (see
e.g.\cite{To}).

 Grauert tubes are strictly
pseudo-convex Stein manifolds,  and in some ways are analogous to
strictly pseudo-convex domains in $\C^m$ and to Hermitian unit
bundles in negative line bundles. The purpose of this article and
its sequel \cite{Z1}  is to extend to Grauert tubes some of the
basic notions and results of PSH (pluri-subharmonic) function
theory on stricty pseudo-convex domains in $\C^n$ (cf.
\cite{K,BL}),  and their recent generalization of this theory to
\kahler manifolds in \cite{GZ}. The basic  theme is to use
analytic continuations of eigenfunctions $\{\phi_{\lambda_j}^{\C}
\}$ in place of holomorphic  polynomials of degree $\sim
\lambda_j$  on $\C^m$ or holomorphic sections of line bundles of
degree $\sim \lambda_j$ over a \kahler manifold. The  primary
objects are the analytic continuations of the spectral projections
kernels of $\Delta_g$,
\begin{equation}\label{CXSPMa}   \Pi_{I_{\lambda}}^{\C}(\zeta, \bar{\zeta}) =
\sum_{j: \lambda_j \in  I_{\lambda}}
|\phi_{\lambda_j}^{\C}(\zeta)|^2,
\end{equation}  which are of exponential growth, and  their  `tempered' analogues,
\begin{equation}\label{TCXSPM}   P_{ I_{\lambda}}^{\tau}(\zeta, \bar{\zeta}) =
\sum_{j: \lambda_j \in  I_{\lambda}} e^{- \tau \lambda_j}
|\phi_{\lambda_j}^{\C}(\zeta)|^2, \;\; (\sqrt{\rho}(\zeta) \leq
\tau),
\end{equation}
where $I_{\lambda} $ could be a short interval  $[\lambda, \lambda
+ 1]$ of frequencies or a long window $[0, \lambda]$. In this article, we
only consider long windows $I_{\lambda} = [0, \lambda]$ while in \cite{Z1}
we refine the results to short windows using the long time behavior of the complexified
geodesic flow.  The tempered
kernels $P_{ I_{\lambda}}^{\tau}(\zeta, \bar{\zeta})$ are in some
ways analogous to the `density of states function' or Bergman
kernel on the diagonal in the setting of positive line bundles
over \kahler manifolds \cite{Z4}. We gave some initial results on
these kernels in \cite{Z2}, by somewhat different methods.

A basic notion in PSH theory is that of maximal PSH functions
satisfying bounds and the (non-obviously) equivalent
Siciak-Zaharjuta extremal PSH functions. We define a Grauert tube
analogue of the Siciak-Zaharjuta extremal function and show in
Theorem \ref{SICIAK} that it is the same as the Grauert tube
function. The proof is to relate both to the complexified spectral
projections \eqref{CXSPMa}which are defined in terms of
eigenfunctions.  The proof only requires a one term local Weyl law
(see Theorem \ref{PTAULWL}), which also gives improvements on the
pointwise bounds on complexified eigenfunctions in \cite{GLS}. The
result can be improved to a rather interesting two term Weyl law
of Safarov-Vasilliev type \cite{SV}; this  is carried out in the
sequel \cite{Z1}.

This article also contains  a general type of result on integrals
of triple products of eigenfunctions (Proposition
\ref{TRIPLEPROD}). The precise results depend on the radius of the
maximal Grauert tube. We point out that there are two possible
definitions (see Definition \ref{MAXGRAU}), an analytic maximal radius and a geometric maximal
radius; in \S \ref{MAX} we sketch a proof that these radii are the same.
Finally, we build on \cite{AS} to
give a result on limit distribution of complex zeros of
eigenfunctions on locally symmetric manifolds of non-positive
curvature.

In keeping with the nature of this symposium, this article is
partly expository. In particular, we review the construction of
the Hadamard parametrix of the wave kernel and its holomorphic
extension to Grauert tubes. We also illustrate the issues and
notions with examples from surfaces of constant curvature. We thank
J. Sj\"ostrand and the referee for helpful discussions/comments on the material.

\subsection{A Siciak-Zaharjuta extremal function for Grauert
tubes}

Before defining the analogues, let us  first  recall the
definitions of relative   maximal or extremal PSH
 functions satisfying bounds on a pair $E \subset \Omega \subset
 \C^m$ where $\Omega$ is a bounded open set.
 There are two definitions:

\begin{itemize}

\item  The pluri-complex Green's function relative to a subset $E
\subset \Omega$, defined \cite{Br,Sic} as the upper
semi-continuous regularization $V_{E, \Omega}^*$ of

$$\begin{array}{lll} V_{E, \Omega}(z) & = &  \sup\{u(z): u \in PSH(\Omega), u|_{E}  \leq 0, u
|_{\partial \Omega} \leq 1\}
\end{array}.$$

\item  The Siciak-Zaharjuta  extremal function relative to $E
\subset \Omega$, defined by
$$\log \Phi_E^N(\zeta) = \sup\{ \frac{1}{N} \log |p_N(\zeta)|: p \in \pcal_E^N \}, \;\;\; \log \Phi_E = \limsup_{N \to \infty} \log \Phi_E^N,$$
where $\pcal_E^N = \{p \in \pcal^N: ||p||_E \leq 1, \;\;
||p||_{\Omega} \leq e^N\}. $

\end{itemize}
Here, $||f||_E = \sup_{z \in E} |f(z)|$ and $\pcal^N$ denotes the
space of all complex analytic polynomials of degree $N$.  Siciak
proved  that $\log \Phi_E = V_E$ (see \cite{Sic2} Theorem 1,  and
\cite{K}, Theorem 5.1.7).  Intuitively, there are enough
polynomials that one can obtain the sup by restricting to
polynomials.

There are analogous definitions in the case of unit co-disc
bundles in the dual of a positive holomorphic Hermitian line
bundle $L \to M$ over a \kahler manifold. In the case of $\CP^n$,
 one defines $$V_K(z) =
\sup \{u(z)\colon u \in \lcal, u\leq 0 \; \mbox{on}\; K \}$$ where
$\lcal$ denotes the Lelong class of all global plurisubharmonic
(PSH) functions $u$ on $\C^n$ with $u(z) \leq c_u +\log {(1+
|z|)}$. We refer to \cite{GZ} for further information in the
\kahler setting.

We now define an analogue of the Siciak-Zaharjuta extremal
function for Grauert tubes in the special case where $E = M$, the
underlying real manifold. A generalization to other sets $E
\subset M_{\tau}$ is discussed in \S \ref{GENSICIAK}.
 The
 Riemannian  analogue of $\pcal^N$ is the space
$$\hcal^{\lambda} = \{p =  \sum_{j: \lambda_j \in I_{\lambda}} a_j
\phi_{\lambda_j}^{\C}, \;\; a_1, \dots, a_{N(\lambda)} \in \R  \}
$$ spanned by the eigenfunctions with `degree' $\lambda_j \leq
\lambda$. Here, $N(\lambda) = \#\{j: \lambda_j \in I_{\lambda}\}$.
As above, we could let $I_{\lambda} = [0, \lambda]$  or
$I_{\lambda} = [\lambda, \lambda + c]$ for some $c > 0$. It is
simpler to work with $L^2$ based norms than sup norms, and so we
define
$$ S \hcal^{\lambda}_M = \{\psi =  \sum_{j: \lambda_j \leq \lambda} a_j
\phi_{\lambda_j}^{\C}, \;\;  \sum_{j =
1}^{N(\lambda)} |a_j|^2 = 1 \}.
$$

\begin{maindefn}

The Riemannian Siciak-Zaharjuta  extremal function (with respect
to the real locus $M$) is defined by:
\begin{equation} \label{SICIAKDEF}\left\{ \begin{array}{l}  \log \Phi_M^{\lambda}(\zeta) =
\sup\{ \frac{1}{\lambda} \log |\psi(\zeta)|: \psi \in S
\hcal_M^{\lambda} \}, \\ \\ \log \Phi_M = \limsup_{\lambda \to
\infty} \log \Phi_M^{\lambda}. \end{array} \right.  \end{equation}
\end{maindefn}

\begin{rem}

  One could define the analogous notion for any  set
$E \subset M_{\tau}$, with
$$S \hcal^{\lambda}_E = \{p  \in \hcal^{\lambda}, ||p||_{L^2(E)} \leq 1 \}.
$$ But we only discuss the results  for $E = M$ (see \S
\ref{GENSICIAK} for comments on the general case). \medskip


\end{rem}

One could also define the pluri-complex Green's function of
$M_{\tau}$ as follows:

\begin{maindefn}  Let $(M, g)$ be a real analytic Riemannian manifold,  let $M_{\tau}$
be an open Grauert tube, and let  $E \subset M_{\tau}$.  The
Riemannian pluri-complex Green's function with respect to $(E,
M_{\tau}, g)$ is defined by
$$V_{g, E, \tau}(\zeta) =  \sup\{u(z): u \in PSH(M_{\tau}), u|_{E}  \leq 0, u
|_{\partial M_{\tau}} \leq \tau\}.  $$
\end{maindefn}
It is obvious that $V_{g, M, \tau}(\zeta) \geq \sqrt{\rho}(\zeta)$
and it is almost standard that $V_{g, M, \tau}(\zeta) =
\sqrt{\rho}(\zeta)$. See Proposition 4.1 of \cite{GZ} or Corollary
9 of \cite{BT2}. The  set $M = (\sqrt{\rho})^{-1}(0)$ is often
called the center. As  proved in \cite{LS1},  there are no smooth
exhaustion functions solving the exact HCMA (Theorem 1.1). Hence
$u$ must be singular on its  minimum set. In \cite{HW} it is
proved that the minimum  set of strictly PSH function  is totally
real.

\subsection{Statement of results}

Our first results concern the logarithmic asymptotics of the
complexified spectral projections.

\begin{maintheo}\label{SICIAK}  (see also \cite{Z4})  Let $I_{\lambda} = [0, \lambda]$. Then

\begin{enumerate}

\item  $\log \Phi^{\lambda}_M(\zeta) = \frac{1}{\lambda} \log
\Pi^{\C}_{I_{\lambda}}(\zeta, \bar{\zeta}). $

 \item  $\log \Phi_M = \lim_{\lambda \to
\infty} \log \Phi_M^{\lambda} =  \sqrt{\rho}. $

 \end{enumerate}  \end{maintheo}

To prove the Theorem, it is  convenient to study the tempered
spectral projection measures (\ref{TCXSPM}), or in differentiated
form,
  \begin{equation} \label{SPPROJDAMPED} d_{\lambda} P_{[0, \lambda]
  }^{\tau}(\zeta, \bar{\zeta}) = \sum_j \delta(\lambda -
 \lambda_j) e^{- 2 \tau \lambda_j} |\phi_j^{\C}(\zeta)|^2, \end{equation}
 which is a  temperate distribution on $\R$ for each $\zeta$  satisfying $\sqrt{\rho}(\zeta)
 \leq \tau. $
When we set $\tau = \sqrt{\rho}(\zeta)$ we omit the
 $\tau$ and write
  \begin{equation} \label{SPPROJDAMPEDz} d_{\lambda} P_{[0, \lambda]
  }(\zeta, \bar{\zeta}) = \sum_j \delta(\lambda -
 \lambda_j) e^{- 2 \sqrt{\rho}(\zeta) \lambda_j}
 |\phi_j^{\C}(\zeta)|^2. \end{equation}
The advantage of the tempered projections is that they have
polynomial asymptotics and one can use standard Tauberian theorems
to analyse their growth.

We prove the following  one-term local Weyl law for complexified
spectral projections:
\begin{maintheo}\label{PTAULWL}  On any compact real analytic Riemannian manifold $(M,g)$ of dimension $n$,  we have,  with remainders uniform in
$\zeta$,
\begin{enumerate}

\item For $\sqrt{\rho}(\zeta) \geq \frac{C}{\lambda}, $
$$ P_{[0, \lambda]}(\zeta, \bar{\zeta}) =  (2\pi)^{-n} \left(\frac{\lambda}{\sqrt{\rho}} \right)^{\frac{n-1}{2}}
  \left( \frac{\lambda}{(n-1)/2 + 1} +  O (1) \right);
$$

\item For $\sqrt{\rho}(\zeta) \leq \frac{C}{\lambda}, $ $$ P_{[0,
\lambda]}(\zeta, \bar{\zeta}) =  (2\pi)^{-n} \; \lambda^{n}
\left(1 + O(\lambda^{-1}) \right]. $$
\end{enumerate}

\end{maintheo}

This implies new bounds on pointwise norms on complexified
eigenfunctions, improving those of \cite{GLS}.
inequality gives 

\begin{maincor} \label{PWa} Suppose  $(M, g)$ is real analytic of dimension $n$,  and
that $I_{\lambda} = [0, \lambda]$. Then,

\begin{enumerate}

\item For $\tau \geq \frac{C}{\lambda} $ and $\sqrt{\rho}(\zeta) = \tau$, 
there exists $C > 0$ so that
$$ C
\lambda_j^{-\frac{n-1}{2}} e^{ \tau \lambda} \leq \sup_{\zeta \in
M_{\tau}} |\phi^{\C}_{\lambda}(\zeta)| \leq C
  \lambda^{\frac{n-1}{4} + \half} e^{\tau \lambda}.
$$

\item  For $\tau\leq \frac{C}{\lambda}, $ and $\sqrt{\rho}(\zeta) = \tau$,  there
exists $C > 0$ so that
$$ |\phi^{\C}_{\lambda}(\zeta)| \leq \lambda^{\frac{n - 1}{2}};
$$

\end{enumerate}

\end{maincor}

The lower bound of Corollary \ref{PWa} (1)    combines  Theorem \ref{PTAULWL} with G\"arding's inequality. 
The upper bound  sharpens the estimates claimed in \cite{Bou,GLS},
\begin{equation} \label{SUPESTEIG} \sup_{\zeta \in M_{\tau}} |\phi^{\C}_{\lambda}(\zeta)|
\leq C_{\tau} \lambda^{n + 1} e^{\tau \lambda}.
\end{equation}
The improvement is due to using spectral asymptotics rather than a
crude Sobolev inequality.

The next Proposition ties together the work on  triple inner
products of eigenfunctions in \cite{Sar,BR} and elsewhere  with
analytic continuations of eigenfunctions to Grauert tubes.  The
basic question is the decay rate of the inner products $\int_M
\phi_{\lambda_j} \phi_{\lambda_k}^2 dV_g$ where $dV_g$ is the
volume form of $(M, g)$. More generally, one considers integrals
where $\phi_{\lambda_k}^2$ is replaced by a polynomial in
eigenfunctions of fixed eigenvalues.  In \cite{Sar}, it is proved
that
 $|\langle P,\phi_{\lambda_k}\rangle |\le
A(\lambda_k+1)^B\exp(-\pi\sqrt{\lambda_k}/2)$. The exponent is
sharp, but the prefactor is improved in \cite{BR}. The exponent
constant $\frac{\pi}{2}$ is the  radius of the maximal Grauert
tube for hyperbolic space and its quotients (see \cite{Sz,KM} and
\S \ref{EXAMPLES} for the latter fact). The next Proposition
generalizes this bound to any real analytic metric. The radius
$\tau_{an}$ is the maximal {\it analytic tube radius} defined in
Definition \ref{MAXGRAU}. Essentially, it is the largest tube to
which all eigenfunctions analytically continue. Its relation to the geometric
radius is discussed in \S \ref{MAXG} and \S \ref{MAX}.

\begin{mainprop} \label{TRIPLEPROD} Let $(M, g)$ be any compact
real analytic manifold and let $\tau_{an}(g)$ be the maximal
analytic Grauert tube  radius. Then, for all $\tau < \tau_{an}$,
there exists a constant $C_{\tau}$ such that
$$|\int_M \phi_{\lambda_j} \phi_{\lambda_k}^2 dV_g| \leq
C_{\tau} (\lambda_k) e^{- \tau \lambda_j}. $$ If $\partial
M_{\tau_{an}(g)}$ is a smooth manifold and $\phi_{\lambda_k}^{\C}$
is a distribution of order $r$ on $\partial M_{\tau_{an}(g)}$,
then there exists a constant $C$ so that
$$|\int_M \phi_{\lambda_j} \phi_{\lambda_k}^2 dV_g| \leq
C (\lambda_k) \lambda_j^r  e^{- \tau_{an}(g) \lambda_j}. $$
\end{mainprop}

As will be seen in the proof, $C(\lambda_k)$ is a Sobolev $W_s$
norm of $e^{\tau \sqrt{\Delta}} \phi_{\lambda_k}$.  The statement
lacks the precision of the hyperbolic case, since we do not determine
whether $\partial M_{\tau}$ is even a smooth manifold. In \S \ref{MAX},
we sketch a proof that  $\tau_{an}$ is the usual geometric radius of the
Grauert tube, and then the estimate of Proposition \ref{TRIPLEPROD} has almost the
same exponential asymptotics as in the hyperbolic case.

Finally, we give an application to complex zeros of the joint
eigenfunctions of the algebra $\dcal$ of invariant differential
operators on  the locally symmetric space $SO(n, \R) \backslash
SL_n(\R)/\Gamma$, where $\Gamma $ is a co-compact discrete
subgroup of $SL_n(\R)$ In \cite{AS}, Anantharaman-Silberman proved
a number of results on the entropies of the quantum limit measures
of the joint eigenfunctions as the joint eigenvalue tends to
infinity. Roughly speaking, the results say that the quantum limit
measures must have a non-trivial Haar component. This result is
sufficient to determine the limit distribution of complex zeros of
the complexifications of the same eigenfunctions.
 We denote by $[Z_{\phi_{\lambda}^{\C}}]$ the
current of integration over the complex zero set of
$\phi_{\lambda}^{\C}$. 

\begin{maintheo}\label{ZERORAN}  Let $(M, g)$ be a compact locally symmetric manifold, and let
$\{\phi_{\lambda}\}$ be any orthonormal basis of the joint
$\dcal$-eigenfunctions.  Then for $\tau < \tau_{an}$.
$$\frac{1}{\lambda} [Z_{\phi_{\lambda}^{\C}}] \to  \frac{i}{ \pi} \ddbar \sqrt{\rho},\;\;
 \mbox{weakly in}\;\; \dcal^{' (1,1)} (M_{\tau}), $$
 for the entire sequence of eigenfunctions  $\{\psi_{j}\}$.
\end{maintheo}

This proof requires just a small observation improving on the weak
convergence result of \cite{Z3},   placed on top of the very
strong Haar component theorem of Anantharaman-Silberman.

\subsection{Results of \cite{Z1}}

The asymptotics of the complexified spectral projection kernels
\eqref{CXSPMa} are complex analogues of those of the diagonal
spectral projections in the real domain and reflect the structure
of complex geodesics from $\zeta$ to $\bar{\zeta}$. As in the real
domain, one can obtain  more refined asymptotics of $P_{[\lambda,
\lambda + 1]}(\zeta, \bar{\zeta})$ by using the structure of
geodesic segments from $\zeta$ to $\bar{\zeta}$. This is the
subject of the sequel \cite{Z1}. For the sake of completeness, we
state the results here: There exists an explicit  complex
oscillatory factor $Q_{\zeta}(\lambda)$  depending on the geodesic
arc from $\zeta$ to $\bar{\zeta}$ such that
\begin{enumerate}

\item For $\sqrt{\rho}(\zeta) \geq \frac{C}{\lambda}, $
$$ P^{\tau}_{[0, \lambda] }(\zeta, \bar{\zeta}) =  (2\pi)^{-n} \lambda \left(\frac{\lambda}{\sqrt{\rho}} \right)^{\frac{n - 1}{2}}
  \left( 1 +  Q_{\zeta}(\lambda) \lambda^{-1} +  o(\lambda^{-1})  \right);
$$

\item For $\sqrt{\rho}(\zeta) \leq \frac{C}{\lambda}, $ $$
P^{\tau}_{[0, \lambda]}(\zeta, \bar{\zeta}) =  (2\pi)^{-n} \;
\lambda^{n} +   Q_{\zeta}(\lambda) \lambda^{n-1} + o(\lambda^{n-1}
),
$$
\end{enumerate}
  The functions $Q_{\zeta}(\lambda)$
 depend on whether $(M, g)$ is
a manifold without conjugate points, or with conjugate points. We
refer to \cite{Z1} for the formulae.  A special case is that of
Zoll manifolds where there exists a complete asymptotic expansion
similar to that for line bundles. The two term asymptotics lead to
improvement by one order of magnitude on the bounds in Corollary
\ref{PWa}, and are sharp in that they are achieved by complexified
zonal spherical harmonics on a standard sphere.

  \section{\label{GRAUERT} Grauert tubes and complex geodesic flow}

 By a
theorem of Bruhat-Whitney, a real analytic Riemannian manifold $M$
admits a complexification $M_{\C}$, i.e. a complex manifold into
which $M$ embeds as a totally real submanifold. Corresponding to a
real analytic metric $g$ is a unique plurisubharmonic exhaustion
function $\sqrt{\rho}$ on $M_{\C}$ satisfying two conditions (i)
It satisfies the Monge-Amp\`ere equation $(i \ddbar \sqrt{\rho})^n
= \delta_{M, g}$ where $\delta_{M,g} $ is the delta function on
$M$ with density $dV_g$ equal to the volume density of $g$; (ii)
the \kahler metric $\omega_g = i \ddbar \rho$ on $M_{\C}$ agrees
with $g$ along $M$. In fact,  \begin{equation} \label{rhoeq}
\sqrt{\rho}(\zeta) = \frac{1}{2 i} \sqrt{r^2_{\C}(\zeta,
\bar{\zeta})}, \end{equation} where $r^2(x,y)$ is the square of
the distance function and $r^2_{\C}$ is its holomorphic extension
to a small neighborhood of the anti-diagonal $(\zeta,
\bar{\zeta})$ in $M_{\C} \times M_{\C}$. In the case of flat
$\R^n$, $\sqrt{\rho}(x + i \xi) = 2 |\xi|$ and in general
$\sqrt{\rho}(\zeta)$ measures how far $\zeta$ reaches into the
complexification of $M$.  The open Grauert tube of radius $\tau$
is defined by  $M_{\tau} = \{\zeta \in M_{\C}, \sqrt{\rho}(\zeta)
< \tau\}$. We use the imprecise notation $M_{\C}$ to denote the
open complexificaiton when it is not
  important to specify the radius.

  \subsection{Analytic continuation of the exponential map}

The geodesic flow is a Hamiltonian flow on $T^*M$. In fact, there
are two standard choices of the Hamiltonian.  In PDE it is most
common to define the  (real) homogeneous geodesic flow $g^t$ of
$(M, g)$ as the Hamiltonian flow on $T^*M$ generated by the
Hamiltonian $|\xi|_g$ with respect to the standard Hamiltonian
form $\omega$.  This Hamiltonian is real analytic on $T^*M
\backslash 0$.  In Riemannian geometry it is  standard to let the
time of travel equal $|\xi|_g$; this corresponds to the
Hamiltonian flow of $|\xi|_g^2$, which is real analytic on all of
$T^*M$. We denote its Hamiltonian flow by $G^t$. In general, we
denote by $\Xi_H$ the Hamiltonian vector field of a Hamiltonian
$H$ and its flow by $\exp t \Xi_H$.  Both of the Hamiltonian flows
\begin{itemize}

\item $g^t = \exp t \Xi_{|\xi|_g}$;

\item $G^t = \exp t \Xi_{|\xi|_g^2} $

\end{itemize}
are important in analytic continuation of the wave kernel. The
exponential map is the map $\exp_x: T^*M \to M$ defined by $\exp_x
\xi = \pi G^t(x, \xi)$ where $\pi$ is the standard projection.

We denote by
  $\mbox{inj}(x)$ the injectivity radius of $(M, g)$ at $x$, i.e.
  the radius $r$ of the largest ball  on which $\exp_x: B_r M \to
  M$ is a diffeomorphism to its image.
Since $(M, g)$ is real analytic, $\exp_x t \xi$ admits an analytic
continuation in $t$ and the imaginary time exponential map
\begin{equation} \label{EXP} E: B_{\epsilon}^* M \to M_{\C}, \;\;\; E(x, \xi) =
\exp_x i \xi \end{equation} is, for small enough $\epsilon$, a
diffeomorphism from the ball bundle $B^*_{\epsilon} M$ of radius
$\epsilon $ in $T^*M$ to the Grauert tube $M_{\epsilon}$ in
$M_{\C}$. We have $E^* \omega = \omega_{T^*M}$ where $\omega = i
\ddbar \rho$ and where $\omega_{T^*M}$ is the canonical symplectic
form; and also $E^* \sqrt{\rho} = |\xi|$ \cite{GS1,LS1}. It
follows that $E^*$ conjugates the geodesic flow on $B^*M$ to the
Hamiltonian flow $\exp t \Xi_{\sqrt{\rho}}$ of $\sqrt{\rho}$ with
respect to $\omega$, i.e.
$$E(g^t(x, \xi)) = \exp t \Xi_{\sqrt{\rho}} (\exp_x i \xi). $$

\subsection{\label{MAXG} Maximal Grauert tubes}

A natural definition of {\it maximal Grauert tube} is the maximum
value of $\epsilon$ so that (\ref{EXP}) is a diffeomorphism. We
refer to this radius as the {\it maximal geometric tube radius}.
But for purposes of this paper, another definition of maximality
is relevant: the maximal tube on which all eigenfunctions extend
holomorphically. A closely related definition is the maximal tube
to which the Poisson kernel (\ref{POISSWAVE}) extends
holomorphically. We refer to the radius as the {\it maximal
analytic tube radius}.

A natural question is to relate these  notions of maximal
Grauert tube has not been explored. We therefore define the radii
more precisely:

\begin{defn} \label{MAXGRAU}

\begin{enumerate}
\medskip


\item  The maximal geometric tube radius  $\tau_{g}$  is the
largest radius $\epsilon$ for which $E$ (\ref{EXP}) is a
diffeomorphism.

\item The maximal analytic  tube radius  $\tau_{an}$
$M_{\tau_{an}} \subset M_{\C}$ is the maximal tube to which all
eigenfunctions extend holomorphically and to which the
anti-diagonal  $U(2i \tau, \zeta, \bar{\zeta})$ of the Poisson
kernel admits an analytic continuation.

\end{enumerate}

\end{defn}

We make:

\begin{conj} $\tau_g = \tau_{an}$. \end{conj}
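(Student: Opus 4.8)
The plan is to establish the two inequalities $\tau_{an}\ge\tau_g$ and $\tau_{an}\le\tau_g$ separately; only the second is serious. For $\tau_{an}\ge\tau_g$ one uses the Hadamard parametrix for the Poisson wave kernel reviewed in \S\ref{GRAUERT}. The kernel $U(t,x,y)$ of $e^{-it\sqrt{\Delta}}$ is microlocally a Fourier integral operator whose phase is built from the squared geodesic distance $r^2$ and whose amplitude is built from the exponential map $E$ and Jacobian factors. Since $r^2_\C$ and the complexified exponential map $E$ are holomorphic on the whole tube $M_{\tau_g}$ --- this is exactly the region where $E$ is a diffeomorphism --- the contour-shifting and stationary-phase argument of \cite{Bou,GLS,Z2} continues $\zeta\mapsto U(t,\zeta,y)$, and the anti-diagonal $\zeta\mapsto U(2i\tau,\zeta,\bar\zeta)$ of the Poisson kernel, holomorphically to $M_{\tau_g}$; applying the spectral projections continues every $\phi_{\lambda_j}$ holomorphically to $M_{\tau_g}$. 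Hence $\tau_{an}\ge\tau_g$.

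For $\tau_{an}\le\tau_g$, suppose to the contrary that $\tau_{an}>\tau_g$ and fix $\tau_g<\tau'<\tau_{an}$. Then every $\phi_{\lambda_j}^\C$ is holomorphic on $M_{\tau'}$, the partial sums $\Pi^\C_{[0,\lambda]}(\zeta,\bar\zeta)=\sum_{\lambda_j\le\lambda}|\phi^\C_{\lambda_j}(\zeta)|^2$ are real-analytic there, and $u_\lambda:=\frac1\lambda\log\Pi^\C_{[0,\lambda]}(\zeta,\bar\zeta)$ is PSH and, by the crude bound \eqref{SUPESTEIG}, uniformly bounded above on $M_{\tau'}$. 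On the subtube $M_{\tau_g}$ we already know $u_\lambda\to\sqrt\rho$ by Theorem \ref{SICIAK}. The idea is to propagate this convergence and the one-term local Weyl law of Theorem \ref{PTAULWL} outward: the analytic input of Theorem \ref{PTAULWL} is an asymptotic expansion of the tempered measures $P^\tau_{[0,\lambda]}(\zeta,\bar\zeta)$ governed by the complexified geodesic arcs from $\zeta$ to $\bar\zeta$, and this expansion survives as long as those arcs and the Hadamard amplitudes along them are holomorphic --- which, once the eigenfunctions extend, one obtains for $\zeta\in M_{\tau'}$ as well by analytic continuation of the parametrix. Carrying this out shows $u_\lambda\to U$ with $U$ a continuous PSH exhaustion of $M_{\tau'}$ that restricts to $\sqrt\rho$ on $M_{\tau_g}$ and satisfies the two defining conditions of the Grauert tube function of \S\ref{GRAUERT}, $(i\ddbar U)^n=\delta_{M,g}$ and $i\ddbar U^2|_M=g$. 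Elliptic regularity for the homogeneous complex Monge-Amp\`ere operator makes $U$ real-analytic off $M$, so $U$ is a genuine Grauert tube function of radius $\tau'>\tau_g$; equivalently $E$ extends to a diffeomorphism of $B^*_{\tau'}M$ onto $M_{\tau'}$, contradicting the maximality of $\tau_g$ (\cite{LS1,GS1}). Hence $\tau_{an}\le\tau_g$.

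I expect the main obstacle to be exactly the propagation step just described, and within it the possibility that at radius $\tau_g$ the map $E$ ceases to be injective while remaining an immersion. In that scenario nothing is \emph{locally} singular: along each of the finitely many sheets of $E$ over a point of $M_{\tau'}\setminus M_{\tau_g}$ the eigenfunctions, the Hadamard parametrix, and a smooth branch of $\sqrt\rho$ all continue, and the argument above produces only a smooth branch of $\sqrt\rho$ rather than a genuine single-valued one. Ruling this out requires showing that \textbf{single-valuedness of all eigenfunctions --- equivalently, of the Poisson kernel anti-diagonal $U(2i\tau',\zeta,\bar\zeta)$ --- forces the distinct sheets of $E$ to carry the same germ of the wave kernel}, so that a transversality argument makes the sheets coincide. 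Here the spectral theory must interact with the symplectic geometry of the complexified geodesic flow: the local Weyl law of Theorem \ref{PTAULWL}, which gives $\frac1\lambda\log P_{[0,\lambda]}(\zeta,\bar\zeta)\to 0$ precisely on $\partial M_{\tau}$ with $\tau=\sqrt\rho(\zeta)$, should be decisive, since two competing sheets would contribute two exponential rates and hence a discontinuity in the limit, incompatible by connectedness with the convergence already known on $M_{\tau_g}$. Making this step, together with the accompanying regularity statement for $\partial M_{\tau_g}$, rigorous is what remains; a sketch is given in \S\ref{MAX}.
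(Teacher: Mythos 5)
Your proposal diverges from the paper's argument, and both halves have real gaps. For $\tau_{an}\ge\tau_g$ you continue the Poisson--wave kernel via its Hadamard/H\"ormander parametrix ``over all of $M_{\tau_g}$,'' but that is precisely what cannot be taken for granted: the Hadamard series converges only for $|\Gamma|$ small, i.e.\ near the characteristic conoid and inside the injectivity radius (Theorem \ref{MAINHAD}), and Theorem \ref{BOUFIO2} is stated, and used, only for sufficiently small $\tau>0$. Saying that contour shifting and stationary phase continue $U(2i\tau,\zeta,\bar\zeta)$ up to the geometric radius is essentially a restatement of the conjecture for the Poisson kernel, not a proof; the passage from a local parametrix near the anti-diagonal to the global kernel, and the control of the remainder, are exactly where the radius restriction enters. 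The paper's sketch in \S\ref{MAX} avoids kernels altogether: it continues the eigenfunctions themselves, using the theorem of Zerner/Bony--Schapira/H\"ormander \cite{Zer,Ho3,BSh} on holomorphic extension of solutions of a holomorphic PDE across a non-characteristic $C^1$ hypersurface, applied to $\Delta_{\C}\phi_j^{\C}=\lambda_j^2\phi_j^{\C}$. The only geometric input is that each $\partial M_\tau$, $0<\tau<\tau_g$, is non-characteristic for $\Delta_{\C}$, which follows by analytically continuing the eikonal identity $g(\nabla r^2,\nabla r^2)=4r^2$ to $g_{\C}\bigl(\partial r_{\C}^2(\zeta,\bar\zeta),\partial r_{\C}^2(\zeta,\bar\zeta)\bigr)=-4r_{\C}^2(\zeta,\bar\zeta)>0$ off the real locus, using \eqref{rhoeq}. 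A point of non-extendability at some $\sqrt{\rho}(\zeta)<\tau_g$ is then immediately contradicted, so the maximal domain of holomorphy of every eigenfunction is $M_{\tau_g}$, with $\tau_g$ entering only as the largest radius on which the coefficients of $\Delta_{\C}$ (equivalently the tube structure) are defined and holomorphic. No parametrix, Weyl law, or Monge--Amp\`ere regularity is needed for this step.

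Your second half, $\tau_{an}\le\tau_g$, is not a proof even on its own terms: as you concede, the propagation step and the multi-valuedness scenario (where $E$ stays an immersion but loses injectivity) are left open, and that is a genuine missing idea, not a technicality. There are also problems upstream of it: for $\tau'>\tau_g$ it is unclear what $M_{\tau'}$ and $\sqrt{\rho}$ even mean, since $\sqrt{\rho}$ is defined through $r_{\C}^2$ near the anti-diagonal and the tube description is tied to $E$ being a diffeomorphism (Definition \ref{MAXGRAU}); Theorem \ref{PTAULWL} and the bound \eqref{SUPESTEIG} are established only on tubes where the eigenfunctions are already known to extend, so using them to get a uniform upper bound for $\frac{1}{\lambda}\log\Pi^{\C}_{[0,\lambda]}$ on $M_{\tau'}$ is circular; and the claimed elliptic regularity for the homogeneous complex Monge--Amp\`ere equation is not available (the equation is degenerate, and \cite{LS1} is invoked in the paper precisely for the absence of smooth solutions). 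Note finally that in the paper's framework this converse direction is not argued at all: $\tau_g$ is taken as the largest radius on which the complexified structure exists, so the content of the conjecture is the extension up to $\tau_g$, which the non-characteristic-extension route delivers directly.
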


In \S \ref{MAX} we sketch the proof, which is based on holomorphic extensions of solutions of
analytic PDE across non-characteristic hypersurfaces. It would require too much background to
incude a more detailed proof here, but we hope the sketch of proof indicates the main ideas. We found
a similar argument in \cite{KS} in the case of locally symmetric spaces but employing additional arguments.
We intend to give more details in \cite{Z5}.

\subsection{\label{EXAMPLES} Model examples}.

We consider some standard  examples to clarify these analytic
continuations.

\medskip

\noindent{\bf (i) } Complex tori:
\medskip

The complexification of the  torus $M = \R^m/\Z^m$ is $M_{\C} =
\C^m/\Z^m$. The adapted complex structure to the flat metric on
$M$  is the standard (unique)  complex structure on $\C^m$. The
complexified exponential map is $\exp_{ x}^{\C} (i \xi) = z: =  x
+ i \xi $, while  the distance function $r(x, y) = |x - y|$
extends to $r_{\C}(z, w) = \sqrt{(z - w)^2}. $ Then
$\sqrt{\rho}(z, \bar{z}) = \sqrt{(z - \bar{z})^2} = \pm 2 i |\Im
z| = \pm 2 i |\xi|. $

The complexified cotangent bundle is $T^* M_{\C} = \C^m/\Z^m
\times \C^m$, and the holomorphic geodesic flow is the entire
holomorphic map
$$G^t(\zeta, p_{\zeta}) = (\zeta  + t p_{\zeta}, p_{\zeta}). $$

\noindent{\bf  (ii)  $\Ss^n$} \cite{GS1}  The unit sphere $x_1^2 +
\cdots + x_{n+1}^2 = 1$ in $\R^{n+1}$ is complexified as the
complex quadric
$$\Ss^n_{\C} = \{(z_1, \dots, z_n) \in \C^{n + 1}: z_1^2 + \cdots +
z_{n+1}^2 = 1\}. $$ If we write $z_j = x_j + i \xi_j$, the
equations become $|x|^2 - |\xi|^2 = 1, \langle x, \xi \rangle =
0$. The geodesic flow $G^t(x, \xi) = (\cos t |\xi|) x + (\sin t
|\xi|) \frac{\xi}{|\xi|}, - |\xi| (\sin t  |\xi|) x + (\cos t
|\xi|) \xi)$ on $T^* \Ss^n$ complexifies to
$$\begin{array}{lll} G^t(Z, W) && = (\cos t \sqrt{W \cdot W} ) Z + (\sin t
\sqrt{W \cdot W} )) \frac{W}{\sqrt{W \cdot W} )},\\ &&\\
&& - \sqrt{W \cdot W} ) (\sin t \sqrt{W \cdot W} )) Z + (\cos t
\sqrt{W \cdot W} )) W),\;\; ((Z, W) \in T^* \Ss_{\C}^m). \end{array}$$
Here, the real cotangent bundle is the subset of $T^* \R^{n + 1}$
of $(x, \xi)$ such that $x \in \Ss^n, x \cdot \xi = 0$ and the
complexified cotangent bundle $T^* \Ss^n_{\C} \subset T^*
\C^{n+1}$ is the set of vectors $(Z, W): Z \cdot W  = 0$.  We note
that although $\sqrt{W \cdot W}$ is singular at $W = 0$, both
$\cos \sqrt{W \cdot W} )$ and $\sqrt{W \cdot W} ) \sin t \sqrt{W
\cdot W} )$ are holomorphic. The Grauert tube function equals
$$\sqrt{\rho}(z) =  i \cosh^{-1} |z|^2, \;\;(z \in \Ss^n_{\C}). $$
It is globally well defined on $\Ss^n_{\C}$. The characteristic
conoid is defined by $\cosh \frac{1}{i} \sqrt{\rho} = \cosh \tau$.

\medskip

\noindent{\bf (iii) (See e.g. \cite{KM}).  $\Hh^n$} The
hyperboloid model of hyperbolic space is the hypersurface in
$\R^{n+1}$ defined by
$$\Hh^n = \{ x_1^2 + \cdots
x_n^2 - x_{n+1}^2 = -1, \;\; x_n > 0\}. $$ Then,
$$H^n_{\C} = \{(z_1, \dots, z_{n+1}) \in \C^{n+1}:  z_1^2 + \cdots
z_n^2 - z_{n+1}^2 = -1\}. $$ In real coordinates $z_j = x_j + i
\xi_j$, this is:
$$\langle x, x \rangle_L - \langle \xi, \xi \rangle_L = -1,\;\;
\langle x, \xi \rangle_L = 0$$ where $\langle, \rangle_L$ is the
Lorentz inner product of signature $(n, 1)$. Hence the
complexified hyperbolic space is the hypersurface in $\C^{n + 1}$
given by the same equations.

We obtain $\Hh^n_{\C}$ from $\Ss^n_{\C}$ by the map $(z', z_{n +
1}) \to (i z', z_{n + 1})$. The complexified geodesic flow is
given by $$\begin{array}{lll} G^t(Z, W) && = (\cosh t
\sqrt{\langle W,  W \rangle_L}  Z + (\sinh t
\sqrt{\langle W,  W \rangle_L} )) \frac{W}{\sqrt{\langle W,  W \rangle_L} )},\\ &&\\
&& - \sqrt{\langle W,  W \rangle_L} ) (\sinh t \sqrt{\langle W,  W
\rangle_L} )) Z + (\cosh t \sqrt{\langle W,  W \rangle_L} ))
W),\;\; ((Z, W) \in T^* \Hh^m).
\end{array}$$

The Grauert tube  function is:
$$\sqrt{\rho}(z) = \cos^{-1} (||x||_L^2 + ||\xi||_L^2 - \pi)/\sqrt{2}. $$
 The radius of maximal Grauert
tube is $\epsilon = 1$ or $r = \pi/\sqrt{2}.$

\section{$\Delta_g, \Box_g $ and characteristics}

The Laplacian of $(M, g)$ is given in local coordinates by  $$\Delta_g =   
- \frac{1}{\Theta} \sum_{i, j} \frac{\partial}{\partial x_i} (\Theta
g^{i j} ) \frac{\partial}{\partial x_j}, $$ is the Laplacian of
$(M,g)$.  Here, $g_{ij} = g(\frac{\partial}{\partial
x_i},\frac{\partial}{\partial x_j}) $, $[g^{ij}]$ is the inverse
matrix to $[g_{ij}]$ and $\Theta = \sqrt{{\rm det} [g_{ij}]}.$  Since  $g$ is fixed we henceforth
write the Laplacian as $\Delta$.  Note that we have put a minus sign in front of the sum of squares
to make $\Delta$ a non-negative operator. This is for later notational convenience. On
a compact manifold, $\Delta$ is  negative operator with discrete spectrum
\begin{equation} \Delta \phi_j =  \lambda_j^2 \phi_j,\;\;\;\;\;\;\;\;\langle \phi_j, \phi_k
\rangle = \delta_{jk} \end{equation}
 of eigenvalues and eigenfunctions.  Note that the eigenvalues are denoted $\lambda_j^2$; we refer to
$\lambda_j$ as the `frequency'. 

In the real domain, $\Delta$ is an elliptic operator  with principal symbol
$\sigma_{\Delta} (x, \xi) =  :  \sum_{i, j = 1}^n g^{i j}(x) \xi_i \xi_j $. Hence its characteristic set (the zero set of its symbol)
consists only of the zero section $\xi = 0$ in $T^* M$. But when we continue it to the complex domain it develops
a  complex characteristic set
\begin{equation} \label{DELTACHAR} \mbox{Ch}(\Delta_{\C}) = \{ (\zeta, \xi)  \in T^* M_{\C} : \sum_{i, j = 1}^n g^{i j}(\zeta) \xi_i \xi_j = 0 \}. 
\end{equation}

The wave operator on the product spacetime  $(\R \times M, dt^2 - g_x)$ is given by
$$\Box_g = \frac{\partial^2}{\partial t^2} + \Delta_g. $$
The unusual sign in front of $\Delta_g$ is due to the sign normalization above making the Laplacian non-negative. 
Again we omit the subscript when the metric is fixed.
The characteristic variety of $\Box$ is the zero set of its symbol
$$\sigma_{\Box} (t, \tau, x, \xi) =   \tau^2 - |\xi|_x^2, $$
that is,
\begin{equation} \label{BOXCHAR} \mbox{Ch}(\Box) = \{(t, \tau, x, \xi) \in T^* (\R \times M):  \tau^2 - |\xi|_x^2 = 0\}.  \end{equation} 

The null-bicharacteristic flow of $\Box$ is the Hamiltonian flow of $\tau^2 - |\xi|_x^2$ on
Ch($\Box$). Its graph is thus
$$\Lambda= \{(t, \tau, x, \xi, y, \eta): \tau^2 - |\xi|_x^2 = 0, G^t(x, \xi) = (y, \eta) \} \subset T^*(\R \times M \times M). $$

\subsection{Characteristic variety and characteristic conoid}

Following \cite{H}, we put
\begin{equation} \label{GAMMA} \Gamma(t, x, y) = t^2 - r^2(x, y). \end{equation}
Here, $r(x, y)$ is the  distance  between $ x, y$.  It  is singular at $r = 0$ and also  when y is in the ``cut locus'' of $x$.
In this article we only consider $(x, y)$ so that $r(x, y) < \mbox{inj}(x)$, where inj(x) is 
the injectivity radius at $x$, i.e.  is the largest $\epsilon$ so that
$$\exp_x: B^*_{x, \epsilon} M \to M$$
is a diffeomorphism to its image.  The injectivity radius $inj(M, g)$ is the maximum of $\mbox{inj}(x)$ for $x \in M$.
Thus, we work  in a sufficiently small neighborhood of the diagonal so that cut points do not occur.

The squared distance  $r^2(x, y)$ is smooth in a neighborhoof of the diagonal. On a simply
connected manifold $(\tilde{M}, g)$ without conjugate points, it is globally smooth on
$\tilde{M} \times \tilde{M}$. We recall that `without conjugate points' means that $\exp_x : T_x M \to M$ 
is non-singular for all $x$.

The characteristic conoid is the set
\begin{equation} \label{REALCHCON} \ccal = \{(t, x, y): r(x,
y) < \mbox{inj}(x), \;\; r^2(x, y) = t^2\} \subset \R \times M
\times M.
\end{equation} 
It separates $\R \times M \times M$  into  the forward/backward semi-cones
$$\ccal_{\pm} =  \{(t, x, y): t^2 - r^2(x, y)  >  0, \pm t > 0\} . $$

 The  complexificationof $\ccal = \ccal_{\R}$  is the complex
characteristic conoid
\begin{equation} \label{CXCHCON} \ccal_{\C} = \{(t, x, y):
r_{\C}^2(x, y) = t^2\} \subset \C \times M_{\C} \times M_{\C}.
\end{equation}
We note that $\ccal_{\R} \subset \ccal_{\C}$ is a totally real
submanifold. Another totally real submanifold of central
importance in this article is the `diagonal' (or anti-diagonal)
conoid,
\begin{equation} \label{DCHCON} \ccal_{\Delta} = \{(2 i \tau, \zeta,
\bar{\zeta}): \tau \in \R_+, \zeta, \bar{\zeta} \in \partial
M_{\tau} \}.
\end{equation}
By definition, $r^2_{\C}(\zeta, \bar{\zeta}) = - 4 \tau^2$ if
$\zeta \in \partial M_{\tau}$.

\section{Propagators and fundamental solutions}

The main `wave kernels' in this article are the half-wave kernel $e^{i t \sqrt{\Delta}}$ and the 
Poisson kernel $e^{- \tau \sqrt{\Delta}}$ for $\tau > 0$. To put these kernels into context, we now
give a brief review of propagators and fundamental solutions for the wave equation.  We use the
term `propagator' for a solution operator to a Cauchy problem. It will be a homogeneous solution
of $\Box E = 0$ with special initial conditions. We use the term `fundamental solution' for a solution
of the inhomogeneous equation $\Box E = \delta_0$. We freely use  standard notation for homogeneous
distributions on $\R$ and refer to \cite{Ho} for notation and background.

\subsection{Cauchy problem for the wave equation}

The Cauchy problem for the wave equation on $\R \times M$ is the initial value problem
(with Cauchy data $f, g$ ) 
$$\left\{ \begin{array}{l} \Box u(t, x) = 0, \\ \\
u(0, x) = f, \;\; \frac{\partial}{\partial t} u(0, x) = g(x), \end{array} \right.. $$

The solution operator of the Cauchy problem (the ``propagator") is the wave group,
$$\ucal(t) = \begin{pmatrix} \cos t \sqrt{\Delta} & \frac{\sin t \sqrt{\Delta}}{\sqrt{\Delta}} \\ & \\
\sqrt{\Delta} \sin t \sqrt{\Delta}  & \cos t \sqrt{\Delta} \end{pmatrix}. $$
The solution of the Cauchy problem with data $(f, g)$ is $\ucal(t) \begin{pmatrix} f  \\ g \end{pmatrix}. $

Two of the components of $\ucal(t)$ are particularly important:

\begin{itemize}

\item The even part
$\cos t\sqrt{\Delta}$ is the solution operator of the initial value
problem,
\begin{equation} \left\{ \begin{array}{ll} \Box  u = 0& \\
u|_{t=0} = f & \frac{\partial}{\partial t} u |_{t=0} = 0
\end{array}\right .\end{equation} 

\item  The odd part $\frac{\sin t\sqrt{\Delta}}{\sqrt{\Delta}}$ is the
solution operator of the initial value problem,
\begin{equation} \left\{ \begin{array}{ll} \Box u = 0& \\
u|_{t=0} = 0 & \frac{\partial}{\partial t} u |_{t=0} = g
\end{array}\right .\end{equation} 

\end{itemize}

The kernels of $\cos t \sqrt{\Delta}, \frac{\sin t
\sqrt{\Delta}}{\sqrt{\Delta}}$ exhibit finite propagation speed of
solutions of the wave equation,  i.e.  are supported inside the characteristic
conoid $\ccal$ where  $r \leq |t|.$ 

\subsection{Cauchy problem for the half-wave equation}

The forward  half-wave group is the solution operator of the Cauchy problem
$$(\frac{1}{i } \frac{\partial}{\partial t} - \sqrt{\Delta} ) u = 0, \;\;\;  u(0, x) = u_0. $$
The solution is given by 
$$u(t, x) = U(t) u_0(x), \;\;\;
\mbox{with}\;\;U(t) = e^{i t \sqrt{\Delta}}.$$

The Schwartz kernel $U(t, x, y)$ of the wave group  $U(t) = e^{ i
 t \sqrt{\Delta}}$ solves the pseudo-differential Cauchy
 problem(the half-wave equation),
\begin{equation} \label{HALFWE} \left(\frac{1}{i} \frac{\partial }{\partial t} -
\sqrt{\Delta}_x \right) U(t, x, y) = 0, \;\; U(0, x, y) =
\delta_y(x). \end{equation}   Equivalently, it solves the wave
equation with pseudo-differential initial condition,
\begin{equation} \label{WE}\left\{ \begin{array}{ll}
\Box \; U  = 0, & \\
\\ \;\; U(0, x, y) = \delta_y(x),  & \frac{\partial }{\partial t}
U(t, x, y)|_{t = 0} = i \sqrt{\Delta}_x \; \delta_x(y). \end{array}
\right. \end{equation} The solution is given by
\begin{equation} \label{SOL} U(t, x, y) = \cos t \sqrt{\Delta}(x, y) + i \sqrt{\Delta}_x \; \frac{\sin t
\sqrt{\Delta}}{\sqrt{\Delta}}(x, y). \end{equation} 
Unlike the even/odd kernels, $e^{i t \sqrt{\Delta}}$
 has infinite propagation speed,  i.e. is non-zero outside the characteristic conoid $\ccal$; this
is due to the second of its initial condition.

 The half wave group   has the eigenfunction expansion,
 \begin{equation} \label{efnex} U (t, x, y) = \sum_j
e^{i t \lambda_j} \phi_{\lambda_j}(x) \phi_{\lambda_j}(y)
\end{equation}
on $\R \times M \times M$,
which converges in the sense of distributions.

\subsection{Fundamental solutions}

A fundamental solution of the wave equation is a solution of 
$$\Box E(t, x, y) = \delta_0(t) \delta_x(y). $$
The right side is the Schwartz kernel of the identity operator on $\R \times M$. 

There exists a unique fundamental solution which is supported in the forward conoid
$$C_+ = \{(t, x, y): t > 0,  t^2 - r^2(x, y) > 0\}. $$ called
the advanced (or forward) propagator. It is given by
$$E_+(t) = H(t) \frac{\sin t \sqrt{\Delta}}{\sqrt{\Delta}}, $$
where $H(t) = {\bf 1}_{t \geq 0}$ is the Heaviside step function. 
It is well-defined for any curved globally hyperbolic spacetime, while Cauchy problems and propagators
require a choice of ``Cauchy hypersurface" like $\{t = 0\}$.

 In \cite{R} , the forward fundamental solution  is constructed
in terms of the holomorphic family of  Riesz kernels $\frac{(t^2 - r^2)_+^{\alpha}}{\Gamma(\alpha + 1)}$,
which are supported in the forward characteristic conoid  $\ccal_+$.  A more contemporary treatment using the language of homogeneous
distributions on $\R$ is given in \cite{Be}. In \cite{J} it is pointed out that the Riesz kernels are
Schwartz kernels of complex powers $\Box^{\alpha}$ of the wave operator on $\R \times M$. Unlike
complex powers of $\Delta$,  $\Box^{\alpha}$ is only uniquely defined if the Scwhartz kernels are assumed
to be supported in $\ccal_+$.

\subsection{Hadamard-Feynman fundamental solution}

Hadamard and Feynman constructed another fundamental solution which is a (branched)
meromorphic  function of $(t, x, y)$ near the characteristic conoid  with the singularity $(t^2 - r^2)^{-\frac{m-2}{2}}$ analogous to the Newtonian potential
$r^{2 - n}$ in the elliptic case (here $m = n + 1 = \dim \R \times M^n$.)
 and is not supported in $\ccal_+$. It corresponds to the
inverse $(\Box + i 0)^{-1}$ rather than to the Riesz kernel $\Box^{-1}$.  It is the

 For background in the 
case of $\R^n \times \R$ we refer to \cite{IZ} and for the general case we refer to \cite{DH}. 
Hadamard \cite{H} defined this fundamental solution  to be the branched meromorphic fundamental solution of $\Box$,
and referred to it as the `elementary solution. 
 We review  his parametrix
extensively in \S \ref{HF}.

\begin{defn} The Hadamard-Feynman fundamental solution is the operator $$ (\Box + i 0)^{-1}
= \int_{\R} e^{i t \tau} (\Delta - \tau^2 + i 0)^{-1} dt $$ on $\R \times M$.
\end{defn}

\begin{prop} \label{UF}  As a family $U_F(t)$ of operators on $L^2(M)$ it is given by 
$$U_F(t) = \frac{e^{i |t| \sqrt{\Delta}}}{\sqrt{\Delta}}. $$
\end{prop}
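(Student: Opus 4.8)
The plan is to verify the operator identity $(\Box + i0)^{-1} = U_F(t)$ by computing the given integral $\int_{\R} e^{it\tau}(\Delta - \tau^2 + i0)^{-1}\,dt$ spectrally, i.e.\ by diagonalizing $\Delta$ with respect to its orthonormal eigenbasis $\{\phi_j\}$ and reducing everything to a family of scalar contour integrals in the variable $\tau$. On the $\lambda_j^2$-eigenspace the resolvent $(\Delta - \tau^2 + i0)^{-1}$ acts as multiplication by $(\lambda_j^2 - \tau^2 + i0)^{-1}$, so it suffices to establish, for each fixed $\mu = \lambda_j \geq 0$, the distributional Fourier identity
\begin{equation}\label{scalarid}
\int_{\R} e^{it\tau} \frac{d\tau}{\mu^2 - \tau^2 + i0} = \frac{e^{i|t|\mu}}{\mu}
\end{equation}
(with the appropriate convention at $\mu = 0$, where the right side is interpreted as the limit $|t|$, matching $U_F(t) = e^{i|t|\sqrt\Delta}/\sqrt\Delta$ on the kernel of $\Delta$, if that case arises — on a compact manifold $\lambda_0 = 0$ does occur, so one should note that $U_F$ restricted to constants is the operator of multiplication by $|t|$, consistent with $\lim_{\mu \to 0^+} e^{i|t|\mu}/\mu$ being divergent; in practice one works modulo the finite-dimensional kernel or writes $\frac{\sin|t|\mu}{\mu} \to |t|$). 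Summing \eqref{scalarid} against $\phi_j(x)\phi_j(y)$ over $j$ then yields $U_F(t, x, y) = \sum_j \frac{e^{i|t|\lambda_j}}{\lambda_j}\phi_j(x)\phi_j(y)$, which is exactly the kernel of $e^{i|t|\sqrt\Delta}/\sqrt\Delta$ by functional calculus, and the claimed statement that this is $U_F(t)$ as a family of operators on $L^2(M)$.

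The heart of the matter is \eqref{scalarid}, which I would prove by contour integration / residues. Factor $\mu^2 - \tau^2 + i0 = -(\tau - \mu - i0')(\tau + \mu + i0')$ for suitable infinitesimal shifts, so that the two simple poles in the $\tau$-plane sit at $\tau = \mu + i0'$ and $\tau = -\mu - i0'$, i.e.\ just \emph{above} the real axis near $+\mu$ and just \emph{above} near $-\mu$ — this is the placement dictated by the $+i0$ in $(\Box+i0)^{-1}$, and it is precisely what makes the result depend on $|t|$ rather than on $t$. For $t > 0$ one closes the contour in the upper half-plane (where $e^{it\tau}$ decays), picking up both residues; for $t < 0$ one closes in the lower half-plane and picks up nothing — but then one must instead shift and recompute, or more cleanly use the symmetry $\tau \mapsto -\tau$ of the integrand to reduce $t < 0$ to $t > 0$, which directly produces the $|t|$. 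Evaluating the two residues at $t > 0$: the pole at $\tau = \mu$ contributes $\frac{e^{it\mu}}{2\mu}$ and the pole at $\tau = -\mu$ contributes $-\frac{e^{-it\mu}}{-2\mu} = \frac{e^{-it\mu}}{2\mu}$... here one must be careful with the sign from the orientation and from which poles are enclosed; the correct bookkeeping (both poles in the upper half-plane, contour oriented counterclockwise, factor $2\pi i$, and the overall sign from $-(\tau-\mu)(\tau+\mu)$) yields $\frac{e^{i|t|\mu}}{\mu}$ after restoring the $t<0$ case by symmetry. I would double-check the constant and the sign against the known special case: differentiating, $(\frac{1}{i}\partial_t - \mu)\frac{e^{i|t|\mu}}{\mu}$ should produce a delta at $t=0$ with the right weight, consistent with $\Box U_F = \delta_0\otimes I$, equivalently $(\partial_t^2 + \mu^2)\frac{e^{i|t|\mu}}{\mu} = 2i\,\delta_0(t)$ — this is the sanity check that pins down all conventions.

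The main obstacle I anticipate is not any single hard estimate but rather the careful tracking of the $i0$ prescriptions and sign conventions through three layers at once: (a) the sign normalization of $\Delta$ and $\Box = \partial_t^2 + \Delta$ adopted in this paper (non-negative Laplacian, unusual sign in $\Box$), (b) the placement of the resolvent poles implied by $(\Box + i0)^{-1}$ versus the Riesz/forward solution $\Box^{-1}$ discussed just above in the excerpt, and (c) the branch of $\sqrt\Delta$ and the absolute value $|t|$. A secondary technical point is justifying the interchange of the $\tau$-integral with the spectral sum over $j$: this is legitimate in the sense of distributions (or on the dense domain of finite spectral combinations, then extended), since $(\Delta - \tau^2 + i0)^{-1}$ is a bounded operator for $\tau$ off the real spectrum and the integral converges as an oscillatory/distributional integral; one can also regularize by a convergence factor $e^{-\epsilon|\tau|}$ and pass to the limit. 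Once the scalar identity \eqref{scalarid} is nailed down with correct constants, assembling the operator statement is immediate from the functional calculus, and the finite propagation phenomena discussed earlier in the excerpt (that $\cos t\sqrt\Delta$ and $\frac{\sin t\sqrt\Delta}{\sqrt\Delta}$ are supported in $\ccal$, whereas $U_F$ and $U(t)=e^{it\sqrt\Delta}$ are not) are consistent with the form of $U_F(t)$ obtained.
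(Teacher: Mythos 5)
Your proposal follows exactly the paper's route: diagonalize in the eigenbasis $\{\phi_j\}$, reduce to the scalar distributional integral $\int_{\R}e^{it\tau}(\mu^2-\tau^2+i0)^{-1}\,d\tau$, and evaluate it by residues (the paper's entire proof is this reduction plus the sentence ``the evaluation follows by a residue calculation''). So the strategy is right and matches the source.

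One concrete slip in your execution of the residue step, which matters because it is the whole point of the Feynman prescription: your own factorization $\mu^2-\tau^2+i0=-(\tau-\mu-i0')(\tau+\mu+i0')$ puts the poles at $\tau=\mu+i0'$ (\emph{above} the axis) and $\tau=-\mu-i0'$ (\emph{below} the axis) --- one in each half-plane, not both above as you then assert. Consequently, for $t>0$ closing upward you pick up only the pole at $+\mu$ (contribution $\propto e^{it\mu}/\mu$), and for $t<0$ closing downward you pick up only the pole at $-\mu$ (contribution $\propto e^{-it\mu}/\mu$); the two cases assemble into $e^{i|t|\mu}/\mu$ without any need to ``shift and recompute.'' Had both poles really been in the upper half-plane, the $t>0$ computation would yield a combination of $e^{it\mu}$ and $e^{-it\mu}$, i.e.\ the advanced propagator $\propto\sin(t\mu)/\mu$, not $e^{i|t|\mu}/\mu$ --- so the literal version of your contour argument proves the wrong identity, even though your evenness-in-$\tau$ observation correctly explains the $|t|$ dependence. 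Two smaller points you handle better than the paper: the integral actually equals $-i\pi\,e^{i|t|\mu}/\mu$, so a multiplicative constant is being suppressed on both sides (your proposed sanity check $(\partial_t^2+\mu^2)\frac{e^{i|t|\mu}}{\mu}=2i\,\delta_0(t)$ is the right way to pin it down), and the zero mode $\lambda_0=0$ does need to be excluded or treated separately, which the paper does not mention.
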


\begin{proof} The proof is essentially the same as in the case of $M = \R^n$ (see for instance \cite{IZ}). 
Using the eigenfunction expansion  $$(\Delta - \tau^2 + i 0)^{-1} = \sum_j (\lambda^2 - \tau^2 + i 0)^{-1}
\phi_j(x) \phi_j(y) $$
it suffices to show that 
$$\int_{\R} e^{i t \tau}   (\lambda_j^2 - \tau^2 + i 0)^{-1} d\tau = \frac{e^{i |t| \lambda_j}}{\lambda_j}. $$
The evaluation follows by a residue calculation.

\end{proof}

\begin{rem} One can verify that $U_F(t)$ is a fundamental solution directly by applying $\Box$. \end{rem}

The analogous expressions for  the advanced (resp. retarded) Green's function are given by
$$G_{ret} (t, x, y) = - \sum_j  \left( \int_{\R} e^{- i t \tau}  ((\tau + i \epsilon)^2 - \lambda_j^2)^{-1} d \tau\;\; \right)\phi_j(x) \phi_j(y). $$

Since
$$ ((\tau + i \epsilon)^2 - \lambda_j^2)^{-1} = \frac{1}{2 \lambda_j} \left( \frac{1}{\tau - \lambda_j + i \epsilon} -  \frac{1}{\tau + \lambda_j + i \epsilon}  \right) $$
we have
$$G_{ret}(t,x, y) =   C_n H(t) \sum_j \frac{\sin t \lambda_j}{\lambda_j} \phi_j(x) \phi_j(y). $$

\subsection{Fundamental solutions and half-wave propagator on $\R^n$}

We illustrate the definitions in the case of $\R^n$ following \cite{Ho}.  We use the notation
 $\chi_+^{\alpha}(x) = \frac{x_+^{\alpha}}{\Gamma(\alpha + 1)}. $
The advanced/retarded fundamental solutions of $\Box$ on $\R^{n + 1} = \R_t \times \R^n_x$ 
is given by
$$ E_{\pm} (t, x, y) =  \chi_{\pm}^{\frac{2-(n + 1)}{2}} (\Gamma),$$
where we use the Hadamard  notation \eqref{GAMMA} (which unfortunately clashes with the Gamma function).

The  Hadamard-Feynamn fundamental solution on $\R^{n  + 1}$  is a ramified (branched) holomorphic
fundamental solution 
\begin{equation} \label{UFRN} U_F(t, x, y) =  (\Gamma + i 0)^{\frac{2-(n + 1)}{2}} . \end{equation}
There is an associated fundamental solution corresponding to $(\Gamma - i 0)^{\frac{2 - (n + 1)}{2}}. $

The half-wave propagator is  constructed on $\R^n$  by the Fourier inversion formula,
\begin{equation} \label{FOURIERINV} U(t, x, y) = \int_{\R^n} e^{i \langle x - y, \xi \rangle} e^{i t |\xi|} d \xi.  \end{equation}
The Poisson kernel (extending functions on $\R^n$ to harmonic functions on $\R_+ \times \R^n$)
is the half-wave propagagor at positive  imaginary times $t = i \tau$ ($\tau >0$),
\begin{equation} \label{UFORMULA} \begin{array}{lll} U(i \tau, x, y) & = & \int_{\R^n} e^{i \langle x - y, \xi \rangle} e^{- \tau |\xi|} d \xi \\ && \\
& = & 
 \tau^{-n} \left(1 + (\frac{x -
y}{\tau})^2 \right)^{-\frac{n+ 1}{2}}  = \tau \left(\tau^2 + (x -
y)^2)\right)^{-\frac{n + 1}{2}}. \end{array} \end{equation}

In the case of $\R^n$, the Poisson kernel analytically continues  to $t + i \tau, \zeta =
x + i p \in \C_+ \times \C^n$ as the integral
\begin{equation} \label{PWAC} U(t + i \tau , x + i p , y) = \int_{\R^n} e^{i (t + i \tau)  |\xi|} e^{i \langle \xi, x + i p - y
\rangle} d\xi,  \end{equation}  which converges absolutely for $|p| < \tau.$ 
If we  substitute $\tau \to \tau - it$ and let $\tau \to 0$ we get the formula 
\begin{equation} \label{UFORM} U(t, x, y) =C_n\;\lim_{\tau \to 0}  i t ( (t + i \tau))^2 -  r(x, y)^2  )^{- \frac{n + 1}{2}}, \end{equation}
for a constant $C_n$ depending only on the dimension. 

\begin{rem} \label{KERNELS}

We observe that the half-wave kernel differs from the Hadamard-Feynman fundamental solution
not only in the power but also because the former uses powers of  the quadratic form $((t + i 0)^2 - r^2)$
while the latter uses $(t^2 - r^2 + i 0)$.  That is, with $r^2 = |x - y|^2$,

\begin{itemize}

\item $C_n \;(t^2 - r^2 + i 0)^{- \frac{n-1}{2}}$ is the Feynman  fundamental solution.\bigskip \bigskip

\item  $C_n'\;  t ( (t + i 0)^2 - r^2)^{- \frac{n +1}{2}}$  is the solution operator kernel of the half-wave equation.

\end{itemize}
\bigskip

This difference in the kernels holds for general $(M, g)$. In the half-wave kernel, $  (t + i \epsilon)^2 = 
 t^2 - \epsilon^2 + 2 i t \epsilon$ and the imaginary
part only has a fixed sign if we assume that $t > 0$.  This in part explains the $|t|$-dependence in 
the formula of Proposition \ref{UF}.

We observe that neither kernel has `finite propagation speed', i.e. neither is supported in the characteristic
conoid.  

\end{rem}

\subsection{\label{SUB} Subordination of the Poisson kernel to the heat kernel}

There is another standard approach to the Poisson kernel based on the  `subordination identity'
$$\begin{array}{lll}  e^{- \gamma} & = & \frac{1}{\sqrt{\pi}} \int_0^{\infty}
\frac{e^{-u}}{\sqrt{u}}   e^{- \frac{\gamma^2}{4 u}} du. 
\end{array}$$
More generally, for any positive operator $A$,
$$e^{- t A} = \frac{t}{2 \sqrt{\pi}} \int_0^{\infty} e^{- \frac{t^2}{4 u}} e^{- u A^2} u^{-3/2} du. $$
This recuces the construction of the Poisson kernel to the heat kernel,  which is useful
since there exists a well-known parametrix for the heat kernel (Levi, Minakshisundaram-Pleijel).
We follow the exposition in \cite{St} \S III.2.

The subordination identity follows from two further  identities:
$$(i)  \; \int_{\R^n} e^{- \pi \delta |t|^2} e^{- 2 \pi i \langle t, x \rangle} dt = \delta^{-n/2} e^{- \pi \frac{|x|^2
}{\delta}}, $$
and
$$(ii) \; e^{- \gamma} = \frac{1}{\sqrt{\pi}} \int_0^{\infty} \frac{e^{-u}}{\sqrt{u}} e^{- \frac{\gamma^2}{4 u}} du. $$
Proof of (ii): We have
$$e^{- \gamma} =  \frac{1}{\sqrt{\pi}} \int_{-\infty}^{\infty} \frac{e^{i  \gamma \cdot x}}{1 + x^2} dx
= \frac{1}{\pi} \int_0^{\infty} e^{-u} \int_{\R} e^{i \gamma x} e^{- u x^2} dx. $$

The formula \eqref{UFORMULA}  for the Poisson kernel
can be obtained from the subordination identity,
$$e^{- \tau |\xi|} =  \frac{1}{\sqrt{\pi}} \int_0^{\infty} \frac{e^{-u}}{\sqrt{u}} e^{- \frac{(\tau |\xi|)^2}{4 u}} du, $$
giving
$$U(i \tau, x, y) =  \frac{1}{\sqrt{\pi}} \int_0^{\infty}  \int_{\R^n}  
 \frac{e^{-u}}{\sqrt{u}} e^{- \frac{(\tau |\xi|)^2}{4 u}}  e^{i \langle x - y, \xi  \rangle} d \xi  du.$$
Interchanging the order of integration then gives,
$$U(i \tau, x, y) =  \frac{1}{\sqrt{\pi}} \int_0^{\infty}   \frac{e^{-u}}{\sqrt{u}}  \left( \int_{T_x M}  
e^{- \frac{(\tau |\xi|)^2}{4 u}}  e^{i \langle x - y, \xi  \rangle}  d \xi \right) du.$$ 

Substituting the formula for the heat kernel
on $\R^n$,
$$e^{- t \Delta} = (4 \pi t)^{-n/2} e^{- \frac{|x - y|^2}{4 t}}, $$
we get 
$$\begin{array}{lll} e^{- t \sqrt{\Delta} } & = &  \frac{t}{2 \sqrt{\pi}} \int_0^{\infty} e^{- \frac{t^2}{4 u}} e^{- u \Delta} u^{-3/2} du \\ && \\ & = &  \frac{t}{2 \sqrt{\pi}} \int_0^{\infty} e^{- \frac{t^2}{4 u}}  (4 \pi u)^{-n/2} e^{- \frac{|x - y|^2}{4 u}}  u^{-3/2} du. \\ && \\ & = &  C_n \tau
\int_0^{\infty} e^{- \theta( \tau^2+ |x - y|^2)}  \theta^{(n - 1)/2}   d \theta
 = C_n \tau (\tau^2 + |x - y|^2)^{-\frac{n + 1}{2}}.\end{array}$$
In the last step we put $\theta = \frac{1}{4 u}.$

\subsection{\label{CONSTCURV} Wave kernels and Poisson kernels on spaces of constant curvature}

As further illustrations, we  consider the Poisson-wave kernels on spaces of non-zero
constant curvature following  \cite{T}.

\subsubsection{The sphere $\Ss^n$}

Let $A = \sqrt{\Delta + (\frac{n-1}{4})^2}. $ Then the Poisson
operator $e^{- t A}$  is given by
$$\begin{array}{lll} U(i \tau, \omega, \omega') &= &  C_n \frac{\sinh t }{(\cosh \tau - \cos
r(\omega, \omega'))^{\frac{n+1}{2}}} \\ & & \\
& = &  C_n \frac{\partial}{\partial \tau}  \left(\cosh \tau - \cos
r(\omega, \omega')\right)^{-\frac{n-1}{2}}. \end{array}$$ Here,
$r(\omega, \omega')$ is the distance between points of $\Ss^n$.
This formula is proved in \cite{T} using the Poisson integral
formula for a ball. Note that the addition of $\frac{n-1}{4}$
simplifies the formula and makes the operator strictly positive.

The wave kernel is the   boundary value:

$$\begin{array}{l} e^{i t A}(x,y)= U(t, x, y)
=  C_n \sin t \left(\cos (t + i 0)  - \cos r(\omega,
\omega'\right)^{-\frac{n+1}{2}} .
\end{array} $$
We  see the kernel structure emphasized  in Remark \ref{KERNELS}.

The analytically continued Poisson kernel is
$$\begin{array}{l} e^{(- \tau + i t) A} (\zeta, \omega) =
 C_n   \sinh t (
\cosh (\tau + i t) -  \cos r(\omega, \zeta))^{-\frac{n+1}{2}}.
\end{array}
$$
 It is singular on the complex
characteristic conoid    $$\cosh (\tau + i t) - \cosh r(\zeta,
\bar{\zeta}') = 0. $$

\subsubsection{Wave kernel on Hyperbolic space}

On hyperbolic space we define $A = \sqrt{\Delta -
(\frac{n-1}{2})^2}$, which brings the continuous spectrum down to
zero. The wave kernel on hyperbolic space is obtained \cite{T}  by
analytic continuation of the wave kernel of $\frac{\sin t A}{A}$
on the sphere by changing $r \to i r$:
$$\lim_{\epsilon \to 0^+} - 2 C_n \Im (\cos (it - \epsilon) -
\cosh r)_+^{-\frac{n-1}{2}}
 $$
 It follows that the Poisson kernel is
$$U(i \tau, x, y) = \sinh t \left(\cosh (t + i 0)- \cosh
r \right)^{-\frac{n+1}{2}}.
$$
We again  see the kernel structure emphasized  in Remark \ref{KERNELS}. The Poisson kernel and wave kernels for$
e^{it \sqrt{\Delta}}$  rather than $e^{i t A}$ are derived in \cite{JL} for hyperbolic
quotients  using the subordination method (\S \ref{SUB}) and the heat kernel. The use of $\sqrt{\Delta}$
rather than $A$ leads to lower order terms. 

There is an alternative approach using Fourier analysis on
hyperbolic space, where the exponential functions $e^{(i
 \lambda + 1) \langle z, b \rangle} $ play the role of plane waves. Here, $z$ is in the interior of hyperbolic
 space and $b$ lies on its boundary and $\langle z, b \rangle$ is the distance of the horosphere through $z, b$
 to $0$. For background
we refer to \cite{Hel}. We have,
 $$e^{i t A}(x, y) = \int_0^{\infty}  e^{i t \lambda} \left\{\int_B e^{(i
 \lambda + 1) \langle z, b \rangle}  e^{(-i
 \lambda + 1) \langle w, b \rangle} db\right\} dp(\lambda), $$
 where $dp(\lambda)$ is the Planchere measure and $db$ is the
 standard measure on the boundary (a sphere). This formula is the
 analogue of  \eqref{FOURIERINV}.  The inner integral over $B$ is a
 spherical function $\phi_{\lambda}(r(z,w))$ and is the hyperbolic
 analogue of a Bessel function. The analytic continuation of the
 Poisson kernel
 $$e^{- \tau A}(x, y) = \int_0^{\infty}  e^{-\tau \lambda} \left\{\int_B e^{(i
 \lambda + 1) \langle z, b \rangle}  e^{(-i
 \lambda + 1) \langle w, b \rangle} db\right\} dp(\lambda), $$ can be easily read off from this expression.

\section{\label{HF} The Hadamard-Feynman fundamental solution and Hadamard's parametrix}

In his seminal work \cite{H},
Hadamard constructed  a solution of $\Box E = 0$ for $t > 0$ which has the
singularity $\Gamma^{\frac{-m + 2}{2}},\;\;\; m = n + 1 = \dim M \times \R $ (recall that
$\Gamma$ is defined by \eqref{GAMMA}).
Note the analogy to the elliptic case where the Green's function (the kernel of $\Delta^{-1}$)
has the singularity $r^{-n + 2}$ if $n > 2$. 

The fundamental
solution is more complicated in  even spacetime dimensions (i.e. odd space dimensions). Hadamard found
the general solution as follows:

\begin{itemize} 

\item The elementary solution in odd spacetime dimensions has the form 
$$U \Gamma^{- \frac{m-2}{2}}, $$ where
$$U= U_0 + \Gamma U_1 + \cdots + \Gamma^h U_h + \cdots$$
is a holomorphic function.  (This $U$ is not the half-wave propagator!).

\bigskip

\item The elementary solution in even spacetime dimensions has the form

$$U \Gamma^{- \frac{m-2}{2}}  + V \log \Gamma + W,$$
where 
$$U = \sum_{j = 0}^{m-1} U_j \Gamma^j, \;\;\; V = \sum_{j = 0}^{\infty} V_j \Gamma^j, \;\; W = \sum_{j = 1}^{\infty}
W_j \Gamma^j. $$

\end{itemize}

Hadamard's formulae for the fundamental solutions pre-date the Schwartz theory of distributions. We follow his
approach of describing the fundamental solutions as branched meromorphic functions (possibly logarithmically
branched) on complexified spacetime. In modern terms $\Gamma^{\alpha}$ (resp. $\log \Gamma$)
would be defined as the distributions $(\Gamma + i 0)^{\alpha}$ (resp. $\log (\Gamma + i 0))$ as in the 
constant curvature cases. Hadamard implicitly worked in the complexified setting.  For background on $\log (x + i 0)$,
see \cite{GeSh} Ch. III.4.4.

\begin{theo} \label{HADAMARD}  (Hadamard, 1920) With the $U_j, V_k, W_{\ell}$ defined as above, 
\begin{itemize}

\item In odd spacetime dimensions, there exists a formal series $U$ as above
so that  $E = U \Gamma^{\frac{2 - m}{2}}$ solves $\Box E = \delta_0(t) \delta_y(x)$.
If $(M, g)$ is real analytic, the  series $U =\sum_{j = 0}^{\infty} U_j \Gamma^j$ converges absolutely for $|\Gamma | < \epsilon$ sufficient
small, i.e. near the characteristic conoid and  admits a  holomorphic continuation to a complex neighborhood of $\ccal_{\C}$.

\item In even spacetime dimensions,  $E = U \Gamma^{\frac{2 - m}{2}} + V \log \Gamma +W $ solves $\Box E = \delta_0(t) \delta_y(x)$. If $(M,g)$ is real analytic, all  of the series for $U, V, W$ converge for $|\Gamma|$ small enough and admit
analytic continuations to a neighborhood of $\ccal_{\C}$

\end{itemize}

\end{theo}

 In the smooth case, the series do not converge.  But if they are truncated  at some $j_0$,  the partial sum defines
a  parametrix, i.e. a fundamental solution modulo functioins in $C^{j_0}$.  By the Levi sums method
(Duhamel principle) the parametrix differs from a true fundamental solution by a $C^{j_0}$ kernels.
We are mainly interested in real analytic $(M,g)$ in this article and do not go into details on the last point. 
We note that the singularities of the kernel are due to the factors $\Gamma^{\frac{2 - m}{2}}, \log \Gamma$,
which are branched meromorphic (and logarithmic) kernels.  The terms are explicitly evaluated in the case of 
hyperbolic quotients in \cite{JL}. We also refer to Chapter 5.2 of \cite{Gar} for a somewhat  modern
presentation of the proof.

It may be of interest to  note that this construction only occupies a third of Hadamard's book \cite{H}. The rest is devoted
to the use of such kernels to solve the Cauchy problem, using Green's formula applied to a domain obtained
by intersecting the backward characteristic conoid from a point $(t, x)$ of spacetime with the Cauchy hypersurface.
The integrals over the lightlike (null) part of the boundary caused serious trouble since the factors $\Gamma^{\frac{2-m}{2}}$
are infinite along them and need to be re-normalized. This was the origin of Hadamard's finite parts of
divergent integrals. Riesz used analytic continuation methods instead  to define the forward
fundamental solution in \cite{R}.

\subsection{Sketch of proof of Hadamard's construction}

Let  $\Theta = \sqrt{\det(g_{jk})}$ be the volume density in
normal coordinates based at $y$, $dV = \Theta(y, x) dx$. That is,
$$\Theta(x, y) = \left| \det D_{\exp_x^{-1}(y)} \exp_x \right|.$$
Fix $x \in M$ and endow $B_{\epsilon}(x)$ with
geodesic polar coordinates $r, \theta$. That is, use the chart $\exp_x^{-1}: B_r(x) \to B^*_{x, r} M$
combined with polar coordinates on $T^*_x M$. 
 Then $g^{11} =
1, g^{1 j} = 0$ for $j =2, \dots, n$.  Also, $dV = \Theta(x, y) dy
= \Theta(x, r, \theta) r^{n-1} dr d \theta$. So the volume density
$J$  relative to Lebesgue measure $dr d \theta$ in polar
coordinates  is given by $J = r^{n - 1} \Theta$.

 In these
coordinates,
$$\Delta = \frac{1}{J} \sum_{j, k =1}^n \frac{\partial
}{\partial x_j} \left( J  g^{jk} \frac{\partial }{\partial x_k}.
\right) = \frac{\partial^2}{\partial r^2} + \frac{J'}{J}
\frac{\partial}{\partial r} + L, $$ where $L$ involves no
$\frac{\partial}{\partial r}$ derivatives. Equivalently,
$$\Delta = \frac{\partial^2}{\partial r^2} +
(\frac{\Theta'}{\Theta}+ \frac{n-1}{r}) \frac{\partial}{\partial
r} + L, $$

The first step in the parametrix construction is to find the phase function. Hadamard chooses
to use $\Gamma$ \eqref{GAMMA}. In the Lortenzian metric, $\Gamma $ satisfies
\begin{equation} \label{GAMMADOT} \nabla \Gamma \cdot \nabla \Gamma = 4 \Gamma. \end{equation}
\bigskip
This is not the standard Eikonal equation $\sigma_{\Box} (d \phi) = 0$ of geometric optics, but 
rather has the form
$$\sigma_{\Box} (d \Gamma) = 4 \Gamma. $$
But $\Gamma$ is a good phase, since the Lagrangian submanifold
$$\{(t, d_t \Gamma, x, d_x \Gamma, y, - d_y \Gamma)\}$$
is the graph of the bichafacteristic  flow. This is because the $d_x r(x, y)$ is the unit vector pointing along
the geodesic joining $x$ to $y$ and $d_y r(x, y)$ is the unit vector pointing along the geodesic
pointing from $y$ to $x$.

To proceed, we introduce the simplifying notation 
$$M = \Box \Gamma = - 4 - 2 r \frac{(n-1) }{r}- 2 r \frac{\Theta_r}{\Theta}  =  2m  + 2 r \frac{\Theta_r}{\Theta}  $$
where $m = n + 1$.  We then have,
$$\begin{array}{lll} \Box \left[f(\Gamma)  U_j \right]  
& =  &\Box  \left[ f(\Gamma) \right] U_j  + 2 \nabla  \left[ f(\Gamma) \right] \nabla U_j +  f(\Gamma) 
\Box U_j
\\ && \\ & = &\left( f''(\Gamma) \nabla (\Gamma) \cdot \nabla (\Gamma) + f'(\Gamma) \Box(\Gamma) \right) U_j + 2 f'(\Gamma) \nabla \Gamma \cdot \nabla U_j + f(\Gamma) \Box U_j. \end{array}$$

In addition to \eqref{GAMMADOT}, we  further have
$$\left\{ \begin{array}{l} \Box \Gamma  =  4  + \frac{J_r}{J}  2 r \\ \\ 
\nabla \Gamma \cdot \nabla = \nabla (t^2 - r^2) \cdot \nabla 
=  2( t \frac{\partial}{\partial t} + r \frac{\partial}{\partial r})  = 2 s \frac{d}{ds}, \end{array} \right.$$
where we recall that  that we are using
the Lorentz metric of signature $+ ---$.   Here $s^2 = \Gamma$, and the notation  $s \frac{d}{ds}$ refers to differentiation along a spacetime geodesic.

We then have
$$\begin{array}{lll} \Box \left[ f(\Gamma) U_j \right] & = &
\left( f''(\Gamma) (4 \Gamma) + f'(\Gamma)  ( 4  + \frac{J_r}{J}  2 r) )\right) U_j
+ 2 f'(\Gamma) (- 2 s \frac{d}{ds} U_j) + f(\Gamma) \Box U_j. \end{array}$$

We now  apply this equation  in the cases   $f = x^{\frac{2 - m}{2} + j}.$
(and later to $f = \log x$), in which case
$$f' = (\frac{2-m}{2}+ j) x^{ \frac{2-m}{2}+ j -1} , \;\; f'' =   (\frac{2-m}{2} + j)   (\frac{2-m}{2}+ j-1) x^{ \frac{2-m}{2}+ j -2}.$$

We then attempt to solve \begin{equation}
\label{FIRST} \Box \left( \Gamma^{\frac{2-m}{2} }\sum_{j = 0}^{\infty} U_j \Gamma^j \right) = 0 \end{equation}
away from the characteristic conoid by seting the coefficient of each power  $\Gamma^{\frac{2 - m}{2} + j - 1}$   of $\Gamma$ equal to zero. The resulting `transport equations' are
$$\begin{array}{l} [ - 4\left(  (\frac{2-m}{2} + j)   (\frac{2-m}{2} + j-1)  \right) 
+(\frac{2-m}{2} + j)  (- 4  - \frac{J_r}{J}  2 r) ) \\ \\
+ 2 \left((\frac{2-m}{2}+ j) (- 2 s \frac{d}{ds}] \right)U_j 
+\Box U_{j-1} = 0. \end{array}$$
They are  impossible to solve for all $j$ when $m$ is even because  the common factor
 $ (\frac{2-m}{2} + j)  $ vanishes when $j = \frac{m-2}{2}$. We thus first assume that $m$ is odd so that it is non-zero
for all $j$ We then recursively solve 
Hadamard's transport equations in even space dimensions,
$$  4s \frac{d U_k}{ds}  + (M - 2m +  2 r \frac{J_r}{J}) U_k = - \Box U_{k-1}. $$
When $k = 0$ we get 
$$2 s \frac{d U_0}{ds} + 2 s \frac{\Theta_s}{\Theta} = 0,$$ which is solved by
$$U_0 = \Theta^{-\half} .$$
The solution of the $\ell$th transport equation is then, 
$$U_{\ell} = - \frac{U_0}{4 \ell s^{m + \ell}} \int_0^s U_0^{-1} s^{\ell + m -1} \Box U_{\ell - 1} ds. $$
Hence we have a formal solution with the singularity of the Green's function in the elliptic case, and by
comparison with the Euclidean case we see that it solves $\Box E = \delta_0$.

We now consider the necessary modifications in the case of even dimensional spacetimes.
In  this case, $\Gamma^{\frac{2 - m}{2}} \Gamma^j$ is always an integer power. If we could solve
the transport equation for 
$j = \frac{m - 2}{2}$, the resulting term would be regular  with power $\Gamma^0$. The problem is
that 
$\Gamma^0$ should actually be a term with a logarithmic singularity $\log \Gamma$.

Thus the  parametrix \eqref{FIRST} is inadequate in even spacetime dimensions. Hadamard therefore introduced
 a logarithmic  term $V \log (\Gamma)$.  By a similar calculation to the above, 
$$ \begin{array}{l} \Box \left[ (\log \Gamma) V \right] 
=\left( -\Gamma^{-2} (4 \Gamma) + \Gamma^{-1}  (- 4  - \frac{J_r}{J}  2 r) \right) V
 + 2 \Gamma^{-1} (- 2 s \frac{d}{ds} V) + \log \Gamma \Box V. \end{array}$$
Due to \eqref{GAMMADOT},  all terms except the logarithmic term have the same singularity $\Gamma^{-1}$. 
On the other hand, the only way to eliminate the logarithmic term is to insist that
 $\Box V = 0$.  We further assume that
$$V = \sum_{j = 0}^{\infty}  V_j \Gamma^j.$$

We then return  to the unsolvable transport equations for $U_j$ for $j \geq \frac{m-2}{2}$, which
now acquires the new $V_0$ term to become:
$$\begin{array}{l} [ - 4\left(  (\frac{2-m}{2} + j)   (\frac{2-m}{2} + j-1)  \right) 
+(\frac{2-m}{2} + j)  (- 4  - \frac{J_r}{J}  2 r) )
 + 2 \left((\frac{2-m}{2}+ j) (- 2 s \frac{d}{ds}] \right)U_j 
+\Box U_{j-1}  \\ \\
+  \Gamma^{-1}  [ \left(4 +   (- 4  - \frac{J_r}{J}  2 r) \right) 
 + 2  \frac{d}{ds}]V_0 = 0 . \end{array}$$
When $j = \frac{m-2}{2}$, 
everything cancels in the $\Gamma^{-1}$ term except $\Box U_{m -1}$.  Hence, 
 we drop the $U_j$ for $j \geq \frac{m-2}{2}$  and assume the non logarithmic  part is
just the finite sum  $\sum_{j = 0}^{m - 1} U_j \Gamma^j$. 
But adding in the $V_0$ term we get  the transport equation,
$$ - 4s \frac{d V_0}{ds} - 2 r \frac{J_r}{J} V_0 = - \Box U_{m-1}. $$
Here, $U_{m-1}$ is known and we solve for $V_0$ to get,
$$V_{0} = - \frac{U_0}{4 s^{m }} \int_0^s U_0^{-1} s^{ m -1} \Box U_{m-1}ds. $$

The condition  $\Box V = 0$ imposed above then determines the rest of the coefficients $V_j$, 
$$V_{\ell} = - \frac{U_0}{4 \ell s^{m + \ell}} \int_0^s U_0^{-1} s^{\ell + m -1} \Box V_{\ell - 1} ds. $$
We now have two equations: the original $\Box (U \Gamma^{\frac{2-m}{2}} U + V \log \Gamma) = 0$
and the new $\Box V = 0$.   By solving the transport equations for $U_0, \dots, U_{m-1}, V_0, V_j (j \geq 1)$
we obtain a solution of an inhomogeneous equation of the form,
$$\Box (U \Gamma^{\frac{2 - m}{2}} + V \log \Gamma) = \sum_{j = 0} w_j \Gamma^j, $$
where the right side is regular. 
To complete the construction, we add a new  term of the form $W  = \sum_{\ell = 1}^{\infty} W_{\ell} (r^2 - t^2)^{\ell}$ 
in order to ensure that 
$$ \Box \sum_{j = 0}^{m-1} U_j (r^2 - t^2)^{-m + j} + V \log (r^2 - t^2) + W) = 0 $$
away from the characteristic conoid. It then
suffices to find $W_j$ so that
$$\Box \sum_{j = 1}^{\infty} W_j \Gamma^j =  \sum_{j = 0} w_j \Gamma^j. $$
This leads to more transport equations which are always solvable (by the Cauchy-Kowalevskaya theorem).
This concludes the sketch of the proof of Theorem \ref{HADAMARD}.

\subsection{\label{HADAN} Convergence in the real analytic case}

The above parametrix construction was formal.  However, when
the metric is real analytic,  Hadamard proved that the formal
 series converges for   $|t|$ and $|\Gamma|$ sufficiently small.   
The
convergence proof based on  the method of majorants.

\begin{theo} \label{MAINHAD} \cite{H} (see also \cite{Gar}) Assume that $(M, g)$ is real analytic.  Then there exists $K >0$
so that the Hadamard parametrix  converges for any
$(t, y)$ such that $t \not= 0$, $r(x, y) < \epsilon =  inj(x_0)$
and
\begin{equation} \label{ROFC} |t^2 - r^2| \leq \frac{\left((1 -
\frac{||y||}{\epsilon} \right)^2}{\left(1 + \frac{m_1}{\epsilon} +
\frac{m_1^2}{\epsilon^2} \right) K}, \;\;\;\;\;\;\;(m_1 =
\frac{m-2}{2}).
\end{equation}
 \end{theo}

It follows that the Hadamard fundamental solutions holomorphically extend to a neighborhood
of $\ccal_{\C}$ as branched meromorphic functions iwith $\ccal_{\C}$ as branch locus. To obtain single valued distributions, one
then needs to restrict the kernels to regions where a unique branch can be defined.

\subsection{Away from $\ccal_{\R}$}

A further complication is that the fundamental solution has only been constructed in  
a neighborhood of $\ccal_{\R}$.   But it is known to be real
analytic on $(\R \times M \times M) \backslash \ccal_{\R}$ and that it extends to a holomorphic kernel away from
a neighborhood of $\ccal_{\C}$. To prove this it suffices to note that  the analytic wave front 
set of the fundamental solution is $\ccal_{\R}$.   A more 
detailed proof  is given  by Mizohata \cite{Miz,Miz2}
for  `elementary solutions', i.e. solutions $E(t, x, y)$ of $\Box E
= 0$ such as $\cos t \sqrt{\Delta}$ or $\frac{\sin t
\sqrt{\Delta}}{\sqrt{\Delta}}$ whose Cauchy data is either zero or
a delta function. Here, $\Box$ operates in the $x$ variable with
$y$ as a parameter. To analyze the wave kernels away from the
characteristic conoid, Mizohata makes the decomposition
\begin{equation} E(t, x, y) = E_N(t, x, y) + w_N(t, x, y) + z_N(t, x,
y), \end{equation} where \begin{itemize}

\item $\Box E_N = f_N$ with $f_N \in C^{N-1}(\R \times M \times
M)$. Also, $E_N(t, x, y) |_{t = 0 } = \delta_y + a(x, y)$ with
$a(x, y) \in C^{\omega}(M \times M)$ (in fact it is independent of
$y$);

\item $\Box w_N(t, x, y) = - f_N(t, x, y),$ with $ w(0, x, y) =
0$;

\item $\Box z_N = 0, z_N(0, x, y) = - a(x, y)$.

\end{itemize}

The same kind of decomposition applies to the Hadamard fundamental solution. The
term constructed by the parametrix method is  $E_N$.  By solving the above equations,
it is shown in \cite{Miz} that the sum is analytic away from $\ccal_{\R}$. One can see that
the Hadamard method is only a branched Laurent type expansion near $\ccal_{\R}$ by
considering the examples in \S \ref{CONSTCURV}  of the kernels for spaces of constant
curvature.

\section{\label{HADPWK} Hadamard parametrix for the Poisson-wave kernel}

We are most interested in the Hadamard parametrix for the half-wave kernel, which does not seem to have been
 discussed in the literature. We are more generally interested in the Poisson-wave semi-group
$e^{i(t + i \tau)\sqrt{\Delta}}$ for $\tau > 0$. The Poisson-wave lernel
\begin{equation} \label{POISSWAVE} U(t + i \tau, x, y) = \sum_j
e^{(i (t + i \tau) \lambda_j} \phi_j(x) \phi_j(y) \end{equation}
is a real analytic kernel which possesses an analytic extension
to a Grauert tube. Thus, there exists a non-zero analytic radius
$\tau_{an}
> 0$ so that the Poisson kernel admits a holomorphic extension
$U(t + i \tau, \zeta, y)$ to $M_{\tau} \times M$  for $\tau \leq
\tau_{an}$.  Since
\begin{equation} U(i \tau) \phi_{\lambda} = e^{- \tau \lambda}
\phi_{\lambda}^{\C}, \end{equation}
 the eigenfunctions analytically extend to the same
maximal tube as does $U(i \tau)$.

We would like to construct a Hadamard type parametrix for \eqref{POISSWAVE}. 
We may derive it from the  Feynman-Hadamard fundamental solution 
in Proposition \ref{UF} 
using that 
\begin{equation}   \frac{d}{dt}  \frac{e^{i |t| \sqrt{\Delta}}}{\sqrt{\Delta}} = i \mbox{sgn}(t)\; e^{i |t| \sqrt{\Delta}} \end{equation}
and 
\begin{equation} e^{i t \sqrt{\Delta}} = \frac{1}{i} H(t)    \frac{d}{dt}  \frac{e^{i |t| \sqrt{\Delta}}}{\sqrt{\Delta}} 
- \frac{1}{i} H(-t)    \frac{d}{dt}  \frac{e^ {-i |t| \sqrt{\Delta}}}{\sqrt{\Delta}} . \end{equation}
Hence, 
\begin{equation} \label{ddt} \frac{d}{i dt} U_F(t) = e^{i t \sqrt{\Delta}},  \;\; (t > 0).  \end{equation}
The restriction to $t > 0$ is consistent with the kernel analysis in Remark \ref{KERNELS}, reflecting the
fact that $e^{it \sqrt{\Delta}}(x, y)$ has the singularity $((t + i 0)^2 - r^2)^{\frac{ - m}{2}}$ (in odd spacetime
dimensions) while $U_F(t)$ has the singularity $(t^2 - r^2 + i0)^{\frac{2 - m}{2}}$. We note (again) that
$((t + i 0)^2  - r^2)^{\alpha} = (t^2 - r^2 +  i 0)^{\alpha}$ for $t > 0$.

From Theorem \ref{HADAMARD} we conclude: 
\begin{cor}  \label{HALFHAD} Let $(M, g)$ be real analytic. Then with the $U_j, V_k, W_{\ell}$ defined as in Theorem \ref{HADAMARD},
we have:
\begin{itemize}

\item In odd spacetime dimensions, for $t > 0$ the Poisson-wave kernel $U(t + i \tau, x, y)$ ($\tau > 0$)
has the form   $ A  \Gamma^{\frac{ - m}{2}}$ where $A
 = \sum_{j = 0}^{\infty} A_j \Gamma^j$ with $A_j$ holomorphic. The series converges absolutely to a holomorphic function for $|\Gamma | < \epsilon$ sufficient
small, i.e. near the characteristic conoid. 

\item In even spacetime dimensions,  for $t > 0$,
the Poisson-wave kernel has the form $ B \Gamma^{\frac{ - m}{2}} + C \log \Gamma +D $ where the 
coefficients $B, C, D$ are holomorphic in a neighborhood of $\ccal_{\C}$, and have the same $\Gamma$
expansions as $A$.

\end{itemize}

\end{cor}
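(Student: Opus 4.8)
The plan is to obtain the corollary by differentiating the Hadamard--Feynman parametrix of Theorem \ref{HADAMARD} in $t$ and invoking \eqref{ddt}. For real $t>0$, Proposition \ref{UF} together with \eqref{ddt} gives $e^{it\sqrt{\Delta}}(x,y)=\frac1i\frac{d}{dt}U_F(t,x,y)$; since $U_F$ extends, as a branched meromorphic kernel, to a neighborhood of $\ccal_{\C}$ and the identity is an identity of holomorphic functions on $\{\mathrm{Re}\,t>0\}$, it persists after the substitution $t\mapsto t+i\tau$ with $\tau>0$, so the Poisson--wave kernel $U(t+i\tau,x,y)$ is $\frac1i$ times the holomorphic $t$-derivative of the continued Hadamard--Feynman kernel. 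It then remains to differentiate the explicit $\Gamma$-expansions of Theorem \ref{HADAMARD} term by term and to collect powers. Termwise differentiation is legitimate because, by Theorem \ref{MAINHAD}, those series converge absolutely and uniformly on a neighborhood of $\ccal_{\C}$ and hence represent holomorphic functions there, whose derivatives are again holomorphic with convergent expansions on (a slightly smaller) neighborhood.

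In the odd spacetime case one has $U_F=\Gamma^{\frac{2-m}{2}}\sum_{j\geq0}U_j\Gamma^j$ with the $U_j$ holomorphic near $\ccal_{\C}$, and since $\frac{d}{dt}\Gamma=2t$ (with $\Gamma$ continued to $(t+i\tau)^2-r_{\C}^2$) differentiation falls on the explicit powers and yields
$$\frac1i\frac{d}{dt}U_F=\frac{2t}{i}\sum_{j\geq0}\left(\frac{2-m}{2}+j\right)U_j\,\Gamma^{\frac{-m}{2}+j}=\Gamma^{\frac{-m}{2}}\,A,$$
with $A=\sum_j A_j\Gamma^j$, $A_j=\frac{2t}{i}(\frac{2-m}{2}+j)U_j$ holomorphic near $\ccal_{\C}$, and the differentiated series has the same domain of absolute convergence as the original. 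In particular the leading singularity of the Poisson--wave kernel is $\Gamma^{-m/2}$ rather than $\Gamma^{\frac{2-m}{2}}$, consistent with Remark \ref{KERNELS}.

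In the even spacetime case $U_F=\Gamma^{\frac{2-m}{2}}\sum_{j=0}^{m-1}U_j\Gamma^j+V\log\Gamma+W$ with $V=\sum_{j\geq0}V_j\Gamma^j$ and $W=\sum_{j\geq1}W_j\Gamma^j$. The polynomial block differentiates exactly as above and produces a $\Gamma^{-m/2}$-series. From $\frac{d}{dt}(V\log\Gamma)=(\partial_t V)\log\Gamma+\frac{2t}{\Gamma}V$ one reads off a new $\log\Gamma$-coefficient $C=\frac1i\partial_t V=\frac{2t}{i}\sum_{j\geq1}jV_j\Gamma^{j-1}$, holomorphic near $\ccal_{\C}$, together with the term $\frac{2t}{i}\,\Gamma^{-1}V=\frac{2t}{i}V_0\Gamma^{-1}+(\mbox{holomorphic})$; finally $\frac1i\frac{d}{dt}W$ is holomorphic. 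The one point needing care is the stray $\Gamma^{-1}$: since $m$ is even and $m\geq2$, one has $\Gamma^{-1}=\Gamma^{-m/2}\cdot\Gamma^{\frac{m}{2}-1}$ with $\frac{m}{2}-1\geq0$ an integer, so this term folds into the $\Gamma^{-m/2}$-series, which together with the polynomial block I would collect into a single holomorphic $B=\sum_j B_j\Gamma^j$. Hence $U(t+i\tau,x,y)=B\Gamma^{-m/2}+C\log\Gamma+D$ with $B,C,D$ holomorphic near $\ccal_{\C}$ and expanded in nonnegative powers of $\Gamma$, so no singularity worse than $\Gamma^{-m/2}$ or $\log\Gamma$ is created.

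The main obstacle is really just this bookkeeping in even dimensions: one must verify that differentiating $\log\Gamma$ produces nothing more singular than $\Gamma^{-m/2}$ and that the spurious $\Gamma^{-1}$ genuinely merges with the meromorphic block, which hinges on $m$ being an even integer $\geq2$. A secondary point to make explicit is that \eqref{ddt}, and hence the whole argument, is valid only for $t>0$ (equivalently $\mathrm{Re}(t+i\tau)>0$), which is why the statement is restricted to $t>0$ and is compatible with the branch discussion of Remark \ref{KERNELS}; to know that the resulting expansion actually represents the Poisson--wave kernel away from a neighborhood of $\ccal_{\C}$ one invokes the analyticity statements of \S\ref{HADAN}.
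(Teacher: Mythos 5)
Your proposal is correct and follows essentially the same route as the paper: the paper also obtains the corollary by applying $\frac{1}{i}\frac{d}{dt}$ to the Hadamard--Feynman fundamental solution via \eqref{ddt} (valid for $t>0$, continued to $t+i\tau$) and then reading off the $\Gamma$-expansions from Theorem \ref{HADAMARD}, with convergence near $\ccal_{\C}$ supplied by Theorem \ref{MAINHAD}. Your explicit bookkeeping in the even case (the $\log\Gamma$ derivative producing a $\Gamma^{-1}$ term that folds into the $\Gamma^{-m/2}$ block since $m$ is even) is exactly the computation the paper leaves implicit.
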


We use this parametrix to prove Theorem \ref{PTAULWL}  (1). 

\subsection{\label{OSCHAD} Hadamard parametrix as an oscillatory integral with complex phase}

 Corollary \ref{HALFHAD} gives a precise description of the singularities of the Poisson-wave
propagator. It  implicitly describes the kernel as a Fourier integral kernel. We
now make this description explicit in the real domain. In the following sections, we extend the
description to the complex domain.

We first express 
$\Gamma^{\frac{-m}{2} + j}$  as an
 oscillatory integral with one phase variable using the well-known identity
\begin{equation} \label{INT} \int_0^{\infty} e^{i \theta \sigma} \theta_+^{\lambda} d \lambda
= i e^{i \lambda \pi/2} \Gamma(\lambda + 1) (\sigma + i
0)^{-\lambda - 1}.
\end{equation}
At least formally, this leads to the representation $$  \int_{0}^{\infty} e^{i \theta (t^2 - r^2)}
 \theta_+^{\frac{n-1}{2} - j}
d\theta  = i e^{i (\frac{n-1}{2} - j) \pi/2} \Gamma(\frac{n-1}{2} -
j + 1) (t^2 - r^2 + i0)^{j-\frac{n -1}{2} - 1}
$$ for the principal term of the Poisson-wave.  Here, the notation $\Gamma = t^2 - r^2$
unfortunately clashes with 
that for the Gamma function, and we temporarily write out its defintion. 

In even space dimensions, the Hadamard parametrix for the Hadamard-Feynman fundamental solution thus 
has the form
\begin{equation}  \label{SUMUJ} \sum_{j = 0}^{\infty} U_j(t, x, y) \Gamma^{\frac{1 - n}{2} + j} =   \int_{0}^{\infty} e^{i \theta (t^2 - r^2)}
\left(\sum_{j = 0}^{\infty} U_j(t, x, y)  (i e^{i (\frac{n-1}{2} - j) \pi/2})^{-1}  \frac{ \theta_+^{\frac{n-3}{2} - j}}{\Gamma(\frac{n-3}{2} - j + 1)} \right)
d\theta. \end{equation} 
Here we follow Hadamard's notation, but it is simpler to re-define the coefficients $U_j$ so that the $\Gamma$-factors
appear on the left side as in \cite{Be} (7).  We thus define
$$\ucal_j(t, x, y) =  \left((i e^{i (\frac{n-1}{2} - j) \pi/2})^{-1}  \frac{1}{\Gamma(\frac{n-3}{2} - j + 1)} \right) U_j(t, x, y) . $$
By the  duplication formula
$\Gamma(z) \Gamma(1 - z) = \frac{\pi}{\sin \pi z}$ with  $z = \frac{m}{2} - k - \frac{\alpha}{2}$, i.e. 
$$\Gamma(\frac{m}{2} - j  - \frac{\alpha}{2}) =  (-1)^j \frac{\pi}{\sin \pi (\frac{m}{2}  - \frac{\alpha}{2})}   \frac{1}{\Gamma(- \frac{m}{2} + 1 + j  + \frac{\alpha}{2})},$$ it follows that
$$U_j(t, x, y) = \left( (-1)^j \frac{\pi}{\sin \pi (\frac{m}{2}  - \frac{\alpha}{2})}   \frac{1}{\Gamma(- \frac{m}{2} + 1 + j  + \frac{\alpha}{2})} \right)
\ucal_j(t, x, y), $$
so that the formula in odd spacetime dimensions becomes 
\begin{equation} \label{HDSTUFFa} \begin{array}{ll}  C_n \frac{1}{\sin \pi (\frac{m}{2}  - \frac{\alpha}{2})} 
\sum_{j = 0}^{\infty} (-1)^j \frac{\ucal_j(t, x, y) }{\Gamma(- \frac{m}{2} + 1 + j  + \frac{\alpha}{2})} (t^2 - r^2)^{-\frac{m - 2}{2} + j} \\ & \\
& = \int_0^{\infty} e^{i \theta(t^2 - r^2)}
\left(\sum_{j = 0}^{\infty} \ucal_j(t, x, y)  \theta_+^{\frac{n-3}{2} - j
}\right)
d\theta. \end{array} \end{equation} 

The  amplitude in the right side of \eqref{HDSTUFFa} is then a formal analytic symbol,
\begin{equation} \label{AMPa} A(t, x, y, \theta) =  \sum_{j =
0}^{\infty} \ucal_j (t, x,y) \theta_+^{\frac{n-3}{2} - j},
\end{equation} 
Due to the Gamma-factors appearing in the identity \eqref{INT},  convergence of the series on the left side of \eqref{HDSTUFFa} does not imply convergence of the series
\eqref{AMPa}.  However,  there exists a realization of the formal symbol \eqref{AMPa}
by a holmorphic symbol 
$$\acal(t, x, y, \theta) = \sum_{0 \leq j \leq \frac{\theta}{e C}}   \ucal_j (t, x,y) \theta_+^{\frac{n-3}{2} - j},$$
and one obtains an analytic 
parametrix 
\begin{equation} \label{PARAONEanh} U(t, x, y) = \int_0^{\infty} e^{i \theta \Gamma} \acal(t, x, y, \theta) d \theta
\end{equation}
which approximates the wave kernel  for small $|t|$ and $(x, y)$ near the diagonal up to a holomorphic
error, whose amplitude is exponentially decaying in $\theta$. Here, we recall (see \cite{Sj}, p. 3 and section 9)
that a  {\it classical formal analytic symbol} (\cite{Sj}, page 3)  on a domain $\Omega \subset \C^n$ is a formal 
semi-classical series
$$a(z, \lambda) = \sum_{k = 0}^{\infty} a_k(z) \lambda^{-k}, $$
where $a_k(z, \lambda)  \in \ocal(\Omega)$ for all $\lambda > 0$.
Then for some $C > 0$,  the $a_k(z) \in \ocal(\Omega)$ satisfy
$$|a_k(z)| \leq C^{k + 1} k^k, \;\; k = 0, 1, 2, \dots. $$
A realization of the formal symbol is a genuine holomorphic symbol of the form,
$$a(z, \lambda) = \sum_{0 \leq k \leq \frac{\lambda}{e C}} a_k(z)
\lambda^{-k} . $$ It is an analytic symbol since, with the
index restriction,
$$|a_k(z) \lambda^{-k}| \leq  C_{\Omega} (\frac{C k}{\lambda})^k \leq C
e^{-k}. $$ Hence the series converges uniformly on $\Omega$ to a
holomorphic function of $z$ for each $\lambda$.

Returning to \eqref{AMPa}, the   Hadamard-Riesz coefficients $\ucal_j$ are determined inductively
by the transport equations
\begin{equation}\left\{\begin{array}{l}
 \frac{\Theta'}{2 \Theta} \ucal_0 + \frac{\partial \ucal_0}{\partial r} = 0\\ \\
4 i r(x,y) \{(\frac{k+1}{r(x,y)} +  \frac{\Theta'}{2 \Theta})
\ucal_{j+1} + \frac{\partial \ucal_{j + 1}}{\partial r}\} = \Delta_y \ucal_j.
\end{array} \right., \end{equation}
whose solutions are given by:
\begin{equation}\label{HR} \left\{ \begin{array}{l} \ucal_0(x,y) = \Theta^{-\half}(x,y) \\ \\
\ucal_{j+1}(x,y) =  \Theta^{-\half}(x,y) \int_0^1 s^k \Theta(x,
x_s)^{\half} \Delta_2 \ucal_j(x, x_s) ds
\end{array} \right. \end{equation}
where $x_s$ is the geodesic from $x$ to $y$ parametrized
proportionately to arc-length and where $\Delta_2$ operates in the
second variable.

As discussed above,   the representation \eqref{HDSTUFFa}  does not suffice when   $n$ is odd, since  $
\Gamma(z)$ and $\theta_+^{z}$ have poles at the negative integers.  To  rescue the
representation when $n$ is odd, we need to use the distributions $\theta_+^{-n}$ with $n = 1, 2, \dots$,
defined as follows  (see \cite{Ho} Vol. I):   
$$\theta_+^{-k}(\phi) = \int_0^{\infty} (\log \theta) \phi^{(k)}(\theta)
dx/(k-1)! + \phi^{(k-1)}(0) (\sum_{j = 1}^k 1/j)/(k-1)!. $$
This family behaves in an unusal way under derivation, 
$$\frac{d}{d\theta} \theta_+^{-k} = -k \theta_+^{-k-1} + (-1)^k \delta_0^{(k)}/k! $$
(see \cite{Ho} Vol. I (3.2.2)'') and is therefore sometimes avoided in the Hadamard-Riesz parametrix construction (as in \cite{Be}).

However, we have already constructed the parametrices and only want to express them in terms of the
above oscillatory integrals to make contact with Fourier integral operator theory.  In odd space dimensions,
the Hadamard parametrices  can be written in the 
 form
\begin{equation} \label{ODDPAR} \begin{array}{l} \int_0^{\infty} e^{i \theta \Gamma} \left(
U_0(t, x, y)  \theta_+^{ m} + \cdots + U_{m} \theta_+^0 \right) d \theta \\ \\
+ \int_0^{\infty} e^{i \theta\Gamma}  \left( U_{m + 1} \theta_+^{-1} + U_{m + 2} \theta_+^{-2} + \cdots \right) d \theta 
\end{array} \end{equation}
Again the amplitude is a formal symbol. To produce a genuine amplitude it needs to be replace by a realization
which approximates it modulo a holomorphic symbol which is exponentially decaying in $\theta$. 

We are paying close attention to the regularization of the integral at $\theta = 0$, but
only the behavior of the amplitude as $\theta \to \infty$ is relevant to the singularity. 
The terms with $\theta_+^{- k}$ for $k > 0$ produce logarithmic terms in the kernel. 
If  we use a smooth cutoff  at $\theta = 0$, we obtain distributions of the form 
$$u_{\mu}(\Gamma) = \int_{\R} e^{i  \theta \Gamma} \chi(\theta) \theta^{\mu} d\theta$$
where $\chi(\theta) = 1$ for $\theta \geq 1 $ and $\chi(\theta) = 0$ for $\theta\leq \half$. Then
$$u_{-k} (\Gamma) = i^{k+1} \Gamma^{k-1} \log \Gamma, \;\;\mbox{ modulo}\;\;C^{\infty}, \;\;\;
u_{-k}( - \Gamma) = (-i)^{n+1} \Gamma^{n-1} \log \Gamma. $$
Hence the terms with negative powers of $\theta_+$ in \eqref{ODDPAR} produce the logarithmic terms and 
the holomorphic terms.

Above, we discussed the Hadamard-Feynman fundamental solution, but for $t > 0$ we only need to differentiate
it in $t$ (according to Proposition \ref{HALFHAD} ) to obtain the parametrices for the Poisson-wave group.  Away from
the characteristic conoid the Schwartz kernels of the Poisson-wave group and Hadamard-Feynman fundamental
solution are holomorphic by the theorem on propagation of analytic wave front sets \cite{Ho,Sj} (see also \cite{Miz, Miz2}). 
The Fourier integral structure and mapping properties follow immediately from the order of the amplitude and from
the exact formula for the phase.

\subsection{\label{MODOSCHAD} Modified Hadamard parametrix with phase $t - r$ when $t \geq 0$}

We now assume that $t \geq 0$ as well as $\tau > 0$. We note that the phase $t^2 - r^2$ for $U(t, x,y)$
factors as $(t - r) (t + r)$ with $t + r \geq 0$ when $t \geq 0$. Of course, $r(x, y) \geq 0$ in the real domain. 
We can therefore change variables $\theta \to (t + r) \theta$ in \eqref{PARAONEanh} to obtain the modified
Hadamard parametrix,

\begin{equation} \label{PARAONEanh2} U(t, x, y) = (t + r)^{-1} \int_0^{\infty} e^{i \theta (t - r)} \acal(t, x, y, 
\frac{\theta}{t + r}) d \theta, \;\;\;\; (t \geq 0)
\end{equation}
with phase $t - r$. We note that it is singular when $r = 0$ but we only intend to use it for $r_{\C} \not= 0$. 
The amplitude $= (t + r)^{-1}   \acal(t, x, y, 
\frac{\theta}{t + r}) $ is a representative of an analytic symbol as long as $r + t \not= 0$ and $r \not= 0$.

\section{H\"ormander parametrix for the Poisson-wave kernel \label{HORPAR}}

A more familiar  construction of $U(t, x, y)$ and its analytic continuation  which is particularly useful for small
$|t|$ is  the one \eqref{FOURIERINV} based on the Fourier inversion formula. Its generalization to 
Riemannian manifolds is given by 
\begin{equation} \label{PARAONEa} U(t, x, y) = \int_{T^*_y M} e^{
i t |\xi|_{g_y} } e^{i \langle \xi, \exp_y^{-1} (x) \rangle} A(t,
x, y, \xi) d\xi,
\end{equation}
for $(x, y)$ sufficiently close to the diagonal.  We use this parametrix to prove Theorem \ref{PTAULWL} (2).

The amplitude is a polyhomogeous symbol of the form
\begin{equation} \label{AMP} A(t, x, y, \xi)  \sim \sum_{j = }^{\infty} A_j(t, x, y, \xi), \end{equation}
where the asymptotics are in the sense of the symbol topology and where
$$A_j(t, x, y, \tau \xi) = \tau^{-j} A_j(t, x, y, \xi), \;\;\; \mbox{for} |\xi| \geq 1. $$
The principal term $A_0(t, x, y, \xi)$ equals $1$ when $t = 0$ on the diagonal, and the higher $A_j$ are determined
by transport 
equations  discussed  in \cite{DG}.  

It can be verified that in the case of real analytic $(M, g)$,  the amplitude is a classical formal analytic  symbol
(see \S \ref{OSCHAD}). 
Hence if $\acal(t, x, y, \xi)$ is a realization of the amplitude $A(t, x, y, \xi)$, then one obtains an analytic 
parametrix 
\begin{equation} \label{PARAONEan} U(t, x, y) = \int_{T^*_y M} e^{
i t |\xi|_{g_y} } e^{i \langle \xi, \exp_y^{-1} (x) \rangle} \acal(t,
x, y, \xi) d\xi,
\end{equation}
which approximates the wave kernel  for small $|t|$ and $(x, y)$ near the diagonal up to a holomorphic
error, whose amplitude is exponentially decaying in $|\xi|$.

\subsection{Subordination to the heat kernel}

The parametrix \eqref{PARAONE} can also be obtained by subordinating the Poisson-wave kernel to
the heat kernel in the sense of \S \ref{SUB}. To make use of it, one needs to analytically the heat kernel
to $M_{\tau}$. This analytic continuation was studied by 
Golse-Leichtnam-Stenzel in \cite{GLS}., who proved the following: For any $x_0 \in M$ there exists $\epsilon, \rho > 0$
and an open neighborhood $W$ of $x_0$ in $M_{\epsilon}$ such that for $0 < t < 1$ and $(x, y)
\in W \times W$, 
$$E(t, x, y) = N(t, x, y) e^{- \frac{r^2(x, y)}{4t}} + R(t, x, y), $$
where $$N(t, x, y) = \sum_{0 \leq j \leq \frac{1}{C t}} W_j(x, y) t^j  $$
as $t \downarrow 0^+$ where $W_j(x, y)$ are the Hadamard-Minakshisundaram-Pleijel
heat kernel coefficients.  is an analytic symbol of ordern $n/2$ with respect to $t^{-1}$ in the sense of
\cite{Sj}.  As above, the remainder is exponentially small, 
$$|R(t, x, y) | \leq C e^{- \frac{\rho}{8 t}} $$
with a uniform $C$ in $(x, y)$ as $t \downarrow 0^+$. 
The heat kernel itself obviously admits a holmoorphic extension in the open subset  $\Re r_{\C}^2(x, y) > 0$
ot $M_{\C} \times M_{\C}$.

\section{Complexified Poisson kernel as a complex Fourier
integral operator}

We now consider the Fourier integral operator aspects of the analytic continuation of the
Poisson-wave kernel $U(t + i \tau, \zeta, y)$ for $\tau > 0$ and $(\zeta, y) \in M_{\tau}  \times M$,
developing analogues of the results of \S \ref{OSCHAD}  in the complex domain.
  The aim is to prove that the analytically continued
Poisson-wave kernel is a complex Fourier integral operator.
  We
denote by $\ocal (M_{\epsilon})$ the space of holomorphic
functions on the Grauert tube and by a slight abuse of notation we also
denote by $\ocal(\partial M_{\epsilon})$ the CR holomorphic functions on the boundary $\partial M_{\epsilon}$  of the strictly
pseudo-convex domain $M_{\epsilon}$ (the null space of the boundary Cauchy-Riemann operator $\dbar_b$.)
 In particular, we denote by  $\ocal^0(\partial M_{\epsilon}) = H^2(\partial M_{\epsilon})$
the Hardy space of boundary values of holomorphic functions of
$M_{\epsilon}$ which lie in $L^2(\partial M_{\epsilon})$ relative
to the natural Liouville measure
\begin{equation} \label{LIOUVILLE} d\mu_{\tau} = (i \ddbar
\sqrt{\rho})^{m-1} \wedge d^c \sqrt{\rho}. \end{equation}
 We further  denote by  $\ocal^{s +
\frac{n-1}{4}}(\partial M _{\tau})$ the Sobolev spaces of CR
holomorphic functions on $\partial M_{\tau}$, i.e.
\begin{equation} \label{SOBSP} {\mathcal O}^{s +
\frac{m-1}{4}}(\partial M_{\tau}) = W^{s + \frac{m-1}{4}}(\partial
M_{\tau}) \cap \ocal (\partial M_{\tau}), \end{equation}  where
$W_s$ is the $s$th Sobolev space.

The spray \begin{equation} \label{SIGMATAU} \Sigma_{\tau} =
\{(\zeta, r d^c \sqrt{\rho}(\zeta): r \in \R_+\} \subset T^*
(\partial M_{\tau})
\end{equation}  of the contact form $d^c \sqrt{\rho}$ defines a
symplectic cone. There exists a symplectic equivalence  (cf.
\cite{GS2})
\begin{equation} \iota_{\tau} : T^*M - 0 \to
\Sigma_{\tau},\;\; \iota_{\tau} (x, \xi) = (E(x, \tau
\frac{\xi}{|\xi|}), |\xi|d^c \sqrt{\rho}_{E(x, \tau
\frac{\xi}{|\xi|})} ).
\end{equation}

 The following theorem is stated in \cite{Bou} (see also
 \cite{Z3}):

\begin{theo}\label{BOUFIO2} (see  \cite{Bou, GS2,GLS})   For sufficiently small $\tau > 0$, $  U_{\C} (i \tau): L^2(M)
\to \ocal(\partial M_{\tau})$ is a  Fourier integral
operator of order $- \frac{m-1}{4}$  with complex phase  associated to the canonical
relation
$$\Lambda = \{(y, \eta, \iota_{\tau} (y, \eta) \} \subset T^*M \times \Sigma_{\tau}.$$
Moreover, for any $s$,
$$ U_{\C} (i \tau): W^s(M) \to {\mathcal O}^{s +
\frac{m-1}{4}}(\partial  M_{\tau})$$ is a continuous isomorphism.
\end{theo}

The proof of Theorem \ref{BOUFIO2} is barely sketched in
\cite{Bou}. However, the theorem  follows almost immediately  from  the construction of the branched
meromorphic  Hadamard parametrix
in Corollary \ref{HALFHAD}, or alternatively from the  analytic continuation of the H\"ormander parametrix of \S \ref{HORAC}.  
It suffices to show that either is a parametrix for $U_{\C}(i \tau, \zeta, y)$, i.e. differs from it by an analytic kernel (smooth
would be sufficient by analytic wave front set considerations). But the Hadamard parametrix construction is an
exact formula and actually gives a more precise description of the singularities of $U_{\C}(i \tau, \zeta, y)$ than
is stated in Theorem \ref{BOUFIO2}.  We briefly explain how either the Hadamard or H\"ormander parametrix
can be used to complete the proof.

\subsection{Fourier integral distributions  with complex phase}

First,  we  review  the relevant definitions (see \cite{Ho} IV, \S 25.5 or \cite{MeSj}).  A Fourier integral distribution
with complex phase on a manifold $X$ is a distribution that can locally be represented by an oscillatory integral
$$A(x) = \int_{\R^N}  e^{i \phi(x, \theta)} a(x, \theta) d \theta$$
where $a(x, \theta) \in S^m( X \times V)$ is a symbol of order $m$ in a cone $V \subset \R^N$ and where the phase $\phi$ is a positive
regular phase function, i.e. it satisfies
\begin{itemize}

\item $\Im \phi \geq 0$;

\item $d\frac{\partial \phi}{\partial \theta_1}, \dots, d \frac{\partial \phi}{\partial \theta_N}$ are
linearly independent complex vectors on $$C_{\phi \R} = \{(x, \theta) : d_{\theta} (x, \theta) = 0\}. $$

\item In the analytic setting (which is assumed in this article), $\phi$ admits an analytic continuation 
$\phi_{\C}$ to an open
cone in 
$X_{\C} \times V_{\C}$.

\end{itemize}

Define  
$$C_{\phi_{\C}} = \{(x, \theta) \in X_{\C} \times V_{\C} : \nabla_{\theta} \phi_{\C}(x, \theta) = 0\}. $$
Then $C_{\phi_{\C}}$ is a manifold near the real domain. One defines the Lagrangian submanifold
$\Lambda_{\phi_{\C}} \subset T^* X_{\C}$ as the image 
$$(x, \theta) \in C_{\phi_{\C}} \to (x, \nabla_x \phi_{\C}(x, \theta)). $$

\subsection{\label{AC1}Analytic continuation of the Hadamard parametrix}

As in \S \ref{OSCHAD} and \S \ref{MODOSCHAD}, we can express $U_{\C}(i \tau, \zeta, y)$ as a local Fourier integral
distribution with complex phase  by rewriting  the Hadamard
series in Corollary \ref{HALFHAD} as oscillatory integrals.  Here we assume that $\tau > 0, t \geq 0$. 

A complication  is that we can only use  the complexified phase  $\Gamma = t^2 - r^2$ in regions
of complexified $\R \times M \times M$ where its imaginary part is $\geq 0$.  As in \S \ref{MODOSCHAD},
we  could also use the
phase $t - r$  (resp.  $t + r$) in regions where $t + r \not= 0$ (resp. $t - r \not= 0$) and where the contour
$\R_+$ can be deformed back to itself after the 
the change of variables $\theta \to (t + r) \theta$.

\subsection{\label{HORAC} Analytic continuation of the H\"ormander parametrix}

As was the case in $\R^n$ \eqref{PWAC},
the parametrix (\ref{PARAONEan})  admits an analytic
continuation in time to a strip $\{t + i \tau: \tau < \tau_{an}, |t| < 1\}$.  In the space
variables, the   parametrix  then  admits
an analytic continuation to complex $x, y$ satisfying $|r_{\C}(x,
y) | \leq \tau. $

  The analytically continued parametrix \eqref{PARAONE} approximates 
the true analytically continued Poisson kernel  up to a holomorphic kernel. 
 More preicsely,  for any $x_0 \in M$  and $\tau > 0$, there exists $\epsilon, \rho > 0$
and an open neighborhood $W$ of $x_0$ in $M_{\tau}$ such that for $|t| < 1$ and $(x, y)
\in W \times W$, 
\begin{equation}  \label{PARAONE} U(t + i \tau , x, y) =  \int_{T^*_y M} e^{
- \tau |\xi|_{g_y} } e^{i \langle \xi, \exp_y^{-1} (x) \rangle}
\acal (t + i \tau, x, y, \xi) d\xi + R(t, x, y), \end{equation}
where $R(t, x, y)$ is holomorphic for small $|t|$ and for $(x, y)$ near the diagonal. 

The parametrix is only defined near the diagonal where $\exp_y^{-1}$ is defined. However one can extend
it to a global  holomorphic kernel away from $\ccal_{\C}$ by cutting off the first term of \eqref{PARAONE} with a smooth cutoff
$\chi(x, y)$ supported near the diagonal in $M_{\tau} \times M_{\tau}$ and then  solving a $\dbar$ problem
on the Grauert tube (or a $\dbar_b$ problem on its boundary) to  extend the kernel to be globally holmorphic (resp. CR).
We refer to \cite{Z1} for a more detailed discussion. This gives an alternative to the Hadamard parametrix construction
of Corollary \ref{HALFHAD}.

 This concludes
the sketch of proof of Theorem \ref{BOUFIO2}.

\section{\label{POISSON} Tempered spectral projector and Poisson
semi-group as complex Fourier integral operators}

To study the  tempered  spectral projection kernels
(\ref{CXSPMa}), we further need to  continue $U_{\C}(t, \zeta, y)$
anti-holomorphically in the $y$ variable. The discussion is
similar to the holomorphic case except that we need to double the
Grauert tube radius to obtain convergence. We thus have,
\begin{equation} \label{CXWVGP} \begin{array}{lll} U_{\C} (t + 2 i \tau, \zeta, \bar{\zeta}) &= & \sum_j
e^{(- 2 \tau + i t) \lambda_j} |\phi_j^{\C}(\zeta)|^2 \\ && \\ & =
& \int_{\R} e^{i t \lambda} d_{\lambda} P_{[0, \lambda]}^{\tau}
(\zeta, \bar{\zeta}). \end{array} \end{equation} Properties of
these kernels may be obtained from kernels which are analytically
continued in one variable only from  the formula,
\begin{equation} \label{EFORM}
\begin{array}{lll} U_{\C} (t + 2 i \tau, \zeta, \bar{\zeta}') &  = &
\int_M  U (t + i \tau, \zeta, y)  U_{\C} (i \tau, y, \bar{\zeta}'
) dV_g(x)\\
&& \\
&&  = \sum_j  e^{(- 2 \tau + i t) \lambda_j} \phi_j^{\C}(\zeta)
\overline{\phi_j^{\C}(\zeta')}.
\end{array}
\end{equation}

We have,

\begin{prop} \label{FINALHAD} For small $t,  \tau > 0$ and for
sufficiently small $\tau \geq \sqrt{\rho}(\zeta) > 0$, there exists a realization $\bcal(t, \zeta, \bar{\zeta}, \theta)$ of
a formal analytic symbol $B(t, \zeta, \bar{\zeta}, \theta)$ so that as
tempered distributions on $\R \times M_{\tau}$,
\begin{equation} \label{HDD} U_{\C} (t + 2 i \tau, \zeta, \bar{\zeta}) = \int_0^{\infty} e^{i \theta ((t
+ 2 i \tau) - 2 i \sqrt{\rho}(\zeta))} \bcal(t, \zeta,
\bar{\zeta}, \theta) d \theta + R(t + 2 i \tau, \zeta, \bar{\zeta}),
\end{equation}
where 
$R(t + 2 i \tau, \zeta, \bar{\zeta}) $ is the restriction to the anti-diagonal of a  holomorphic kernel. Moreover
\begin{itemize}

\item $\theta ((t
+ 2 i \tau) - 2 i \sqrt{\rho}(\zeta))$ is a phase of positive type.  

\item If $\sqrt{\rho}(\zeta) < \tau$ the entire kernel
is locally holomorphic.  

\item If $\sqrt{\rho}(\zeta) = \tau$  then
\begin{equation} \label{HDDb} U_{\C} (t + 2 i \tau, \zeta, \bar{\zeta}) = \int_0^{\infty} e^{i \theta t} \bcal(t, \zeta,
\bar{\zeta}, \theta) d \theta + R(t + 2 i \tau, \zeta, \bar{\zeta}).
\end{equation}

\end{itemize}

\end{prop}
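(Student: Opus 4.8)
The plan is to build the kernel $U_{\C}(t+2i\tau,\zeta,\bar\zeta)$ from the one-sided analytic continuation already constructed in \S\ref{OSCHAD}--\S\ref{HORAC} by composing it with its anti-holomorphic partner via the semigroup identity \eqref{EFORM}, and then to recognize the composition as an oscillatory integral with a single phase variable $\theta$ whose phase is $\theta\big((t+2i\tau)-2i\sqrt\rho(\zeta)\big)$. The starting point is the modified Hadamard parametrix \eqref{PARAONEanh2}, or equivalently the H\"ormander parametrix \eqref{PARAONE}: for $t\ge 0$ and $0<\tau\le\tau_{an}$ one has $U(t+i\tau,\zeta,y)=\int_0^\infty e^{i\theta((t+i\tau)-r_{\C}(\zeta,y))}\acal(t+i\tau,\zeta,y,\theta/(\cdot))\,d\theta$ modulo a holomorphic kernel, valid for $\zeta\in M_\tau$ and $y\in M$ near the (real) projection of $\zeta$, with $\acal$ a realization of a formal analytic symbol. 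The anti-holomorphic continuation in the second slot is obtained the same way, giving $U_{\C}(i\tau,y,\bar\zeta')$ with phase $-\theta' r_{\C}(y,\bar\zeta')$ and the exponential damping $e^{-\tau|\xi|}$; the doubling of the tube radius in \S\ref{POISSON} is exactly what is needed for the $y$-integral in \eqref{EFORM} to converge.

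First I would substitute the two parametrices into \eqref{EFORM} and perform the $y$-integral over $M$ by stationary phase in the complex domain (Melin--Sj\"ostrand, as in \cite{MeSj}, or the real-domain version of \cite{DG} analytically continued). The critical point equation in $y$ is the statement that $y$ lies on the complexified geodesic joining $\zeta$ to $\bar\zeta$, and the critical value of the combined phase $\theta\big((t+i\tau)-r_{\C}(\zeta,y)\big)+i\tau|\xi|-\theta' r_{\C}(y,\bar\zeta)$ collapses, after identifying $\theta=\theta'$ at the critical point and using $r_{\C}(\zeta,\bar\zeta)=2i\sqrt\rho(\zeta)$ from \eqref{rhoeq}, to $\theta\big((t+2i\tau)-2i\sqrt\rho(\zeta)\big)$. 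The amplitude produced by the stationary phase expansion — Hessian determinant times the product of the two symbols, evaluated along the complexified geodesic — is again a formal analytic symbol in $\theta$ because each ingredient is (the transport-equation solutions \eqref{HR} are analytic, the Hessian is the analytically continued Jacobian of the geodesic flow, nonvanishing for small $\tau$), and I would take $\bcal$ to be a realization of it in the sense of \cite{Sj}. The holomorphic error terms from each parametrix contribute, after the $y$-integration, a kernel holomorphic in $\zeta$ (hence its restriction to the anti-diagonal is the restriction of a holomorphic kernel), yielding the $R$ term.

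Next I would check the positivity of the phase: $\Im\big[\theta\big((t+2i\tau)-2i\sqrt\rho(\zeta)\big)\big]=2\theta(\tau-\sqrt\rho(\zeta))\ge 0$ precisely because $\zeta\in M_\tau$ means $\sqrt\rho(\zeta)\le\tau$, so $\theta\big((t+2i\tau)-2i\sqrt\rho(\zeta)\big)$ is a phase of positive type in the sense reviewed in \S on complex-phase Fourier integral distributions; strict positivity when $\sqrt\rho(\zeta)<\tau$ forces the $\theta$-integral to converge absolutely and depend holomorphically on $\zeta$, giving the second bullet. When $\sqrt\rho(\zeta)=\tau$ the imaginary part of the phase vanishes identically and the phase reduces to $\theta t$, which is \eqref{HDDb}; the resulting object is a genuine tempered (not smooth) distribution on $\R$, and the $\theta\to\infty$ asymptotics of $\bcal$ are what will drive the Tauberian argument for Theorem \ref{PTAULWL}.

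I expect the main obstacle to be the complex stationary phase step: one must verify that the combined phase has a \emph{unique, nondegenerate} critical point in $y$ lying in the allowed complex neighborhood, that the contour $M$ (a totally real cycle in $M_{\C}$) can be deformed to pass through it without crossing the singular locus $r_{\C}=0$ or leaving the region where the Hadamard series converges (Theorem \ref{MAINHAD}), and that the two realizations of formal analytic symbols compose to a realization of the expected product symbol with exponentially small error — this is the place where the smallness of $t$ and $\tau$ is genuinely used. The bookkeeping of which branch of $r_{\C}$ and which sign of $\Im\Gamma$ is being used (cf. Remark \ref{KERNELS}) also requires care, but it is handled exactly as in \S\ref{AC1}: for $t\ge 0$ one works with the factored phase $t-r$ rather than $t^2-r^2$, and the deformation of the $\theta$-contour after the change of variables $\theta\to(t+r)\theta$ stays within the region of analyticity.
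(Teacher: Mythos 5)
Your route is genuinely different from the paper's, and considerably heavier. The paper does not compose one-sided continuations via \eqref{EFORM} at all: it simply takes the Hadamard parametrix of Corollary \ref{HALFHAD}, which is already an exact branched expansion valid in a neighborhood of $\ccal_{\C}$, evaluates it at complexified time $t+2i\tau$ on the anti-diagonal $(\zeta,\bar\zeta)$ (which lies in that neighborhood for small $\tau$, by \eqref{DCHCON}), and factors the phase using \eqref{rhoeq}:
$\Gamma(t+2i\tau,\zeta,\bar\zeta)=(t+2i\tau-2i\sqrt{\rho})(t+2i\tau+2i\sqrt{\rho})$.
Since the second factor is non-vanishing for $\tau>0$, it is absorbed into the Hadamard coefficients (exactly the device of \S\ref{MODOSCHAD}), and each term is then rewritten as an oscillatory integral with the single phase $\theta\bigl(t+2i\tau-2i\sqrt{\rho}(\zeta)\bigr)$, which is manifestly of positive type and reduces to $\theta t$ on $\partial M_\tau$; the $R$ term comes from the analyticity of the kernel away from $\ccal_{\C}$. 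In other words, the proposition is essentially a change of variables in an already-constructed parametrix, not a composition theorem.

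By contrast, the heart of your argument --- performing the $y$-integral in \eqref{EFORM} by Melin--Sj\"ostrand stationary phase, deforming the totally real cycle $M$ into $M_{\C}$ through a unique nondegenerate complex critical point on the complexified geodesic from $\zeta$ to $\bar\zeta$, and showing that the two realizations of analytic symbols compose up to an exponentially small (holomorphic) error --- is precisely the step you flag as ``the main obstacle'' and never carry out. That step is not routine: it is the kind of composition of complex Fourier integral operators that the paper itself explicitly postpones to \cite{Z1} (see the second bullet of Lemma \ref{PSIDOstuff} and the surrounding remarks), and your identifications ($\theta=\theta'$ at the critical point, additivity $r_{\C}(\zeta,y)+r_{\C}(y,\bar\zeta)=r_{\C}(\zeta,\bar\zeta)$ along the deformed contour, persistence of holomorphy of the error terms after the contour shift) are asserted rather than verified. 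So while your critical value, the positivity computation $\Im\,\theta\bigl((t+2i\tau)-2i\sqrt\rho\bigr)=2\theta(\tau-\sqrt\rho)\ge 0$, and the reduction to \eqref{HDDb} on $\partial M_\tau$ are all consistent with the statement, as written the proposal has a genuine gap at its central analytic step; the paper's factoring argument reaches the same conclusion while avoiding that machinery entirely, which is exactly what makes the proposition usable as input to the Tauberian argument for Theorem \ref{PTAULWL} without waiting for \cite{Z1}.
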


\begin{proof}  We use the Hadamard parametrix (Corollary \ref{HALFHAD}) for $U(t + 2 i \tau, \zeta, \bar{\zeta})$ and
use \eqref{rhoeq} to simplify the phase, i.e. we write 
$$\Gamma(t + 2  i \tau, \zeta, \bar{\zeta}) = (t + 2 i \tau -2 i \sqrt{\rho})(t + 2 i \tau + 2 i \sqrt{\rho}) $$
in the Hadamard parametrix in Corollary \ref{HALFHAD}.  The factors of   $(t + 2 i \tau + 2 i \sqrt{\rho}) $ 
are non-zero when $\tau > 0$ and can be absorbed into the Hadamard coefficients. We denote the
new amplitude by $\bcal$ to distinguish it from the amplitude in Corollary \ref{HALFHAD}.  We then express
each term  as a Fourier integral  distribution of complex type with phase $t + 2 i \tau - 2 i
\sqrt{\rho}$. It is manifestly of positive type. On $\partial M_{\tau}$, $t + 2 i \tau - 2i \sqrt{\rho}$ simplifies
to $t$.

\end{proof}

\subsection{Complexified wave group and \szego kernels}

As in \cite{Z3} it will also be necessary for us to understand the composition   $U_{\C}(i \tau)^* U_{\C}(i \tau) $. 
In this regard, it is useful to introduce the
  \szego kernels $\Pi_{\tau}$ of $M_{\tau}$, i.e.  the orthogonal
projections

\begin{equation} \Pi_{\tau}: L^2(\partial M_{\tau}, d\mu_{\tau}) \to H^2(\partial M_{\tau},
d\mu_{\tau}), \end{equation} where $d\mu_{\tau}$ is the natural
 volume form \eqref{LIOUVILLE}. Here as above,
$H^2(\partial M_{\tau}, d\mu_{\tau})$ is the Hardy space of
boundary values of holomorphic functions in $M_{\tau}$ which
belong to $ L^2(\partial M_{\tau}, d\mu_{\tau})$. It is  simple to
prove that the restrictions of  $\{\phi_{\lambda_j}^{\C}\}$ to
$\partial M_{\tau}$  is a basis of $H^2(\partial M_{\tau},
d\mu_{\tau})$. The \szego projector $\Pi_{\tau}$ is a complex
Fourier integral operator with a positive complex canonical
relation. The
 real points of its canonical relation form the graph
$\Delta_{\Sigma}$ of the identity map on the symplectic cone
 $\Sigma_{\tau}
\subset T^*
\partial M_{\tau}$ (\ref{SIGMATAU}). We refer to \cite{Z3} for
further background. We only  need the first statement in the
following:

\begin{lem}\label{PSIDOstuff}  Let  $\Psi^s(X)$ denote  the class of pseudo-differential operators of
order $s$ on $X$. Then,

\begin{itemize}

\item  $U_{\C}(i \tau)^* U_{\C}(i \tau) \in \Psi^{-
\frac{m-1}{2}}(M)$ with principal symbol $|\xi|_g^{- (
\frac{m-1}{2})}.$

\item  $U_{\C}(i \tau) \circ U_{\C}(i \tau)^* = \Pi_{\tau}
A_{\tau} \Pi_{\tau}$ where $A_{\tau} \in \Psi^{ \frac{m-1}{2}}
(\partial M_{\tau})$ has principal symbol $|\sigma|_g^{ (
\frac{m-1}{2})}$ as a function on $\Sigma_{\tau}$.

\end{itemize}

\end{lem}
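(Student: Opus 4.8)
First I would reduce both assertions to the composition calculus for Fourier integral operators with positive complex phase, using the description of $U_\C(i\tau)$ provided by Theorem \ref{BOUFIO2} (equivalently, one may work directly from the explicit oscillatory--integral parametrices of \S\ref{AC1} and \S\ref{HORAC} and compute the compositions by complex stationary phase). Recall from Theorem \ref{BOUFIO2} that $U_\C(i\tau)\colon L^2(M)\to\ocal(\partial M_\tau)$ is an FIO of order $-\frac{m-1}{4}$ with positive complex phase, associated to $\Lambda=\mathrm{graph}(\iota_\tau)$, where $\iota_\tau\colon T^*M-0\to\Sigma_\tau$ is the symplectic equivalence of \eqref{SIGMATAU} (cf.\ \cite{GS2}). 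Forming adjoints with respect to $dV_g$ on $M$ and the Liouville measure $d\mu_\tau$ of \eqref{LIOUVILLE} on $\partial M_\tau$, the Melin--Sj\"ostrand calculus \cite{MeSj} shows that $U_\C(i\tau)^*$ is again an FIO of order $-\frac{m-1}{4}$, now with the transposed canonical relation $\mathrm{graph}(\iota_\tau^{-1})$; and since the range of $U_\C(i\tau)$ lies in $H^2(\partial M_\tau)$ we have $\Pi_\tau U_\C(i\tau)=U_\C(i\tau)$ and $U_\C(i\tau)^*\Pi_\tau=U_\C(i\tau)^*$, which will be used for the second bullet.

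For the first bullet I would compose: the canonical relation of $U_\C(i\tau)^*U_\C(i\tau)$ is $\mathrm{graph}(\iota_\tau^{-1})\circ\mathrm{graph}(\iota_\tau)=\Delta_{T^*M-0}$, and the composition is transversal with no excess because $\iota_\tau$ is a diffeomorphism; hence $U_\C(i\tau)^*U_\C(i\tau)$ is a pseudodifferential operator, of order $-\frac{m-1}{4}+(-\frac{m-1}{4})=-\frac{m-1}{2}$. (The mapping property $W^s(M)\to W^{s+\frac{m-1}{2}}(M)$ also follows at once from the isomorphism statement of Theorem \ref{BOUFIO2} and its adjoint.) Its principal symbol at $(x,\xi)$ is $|b(\iota_\tau(x,\xi))|^2$, where $b$ is the principal symbol of $U_\C(i\tau)$ regarded as a function on $\Lambda\cong T^*M-0$, the Maslov factors (being unimodular) multiplying to $1$ and the half--densities combining in the standard way. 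One then reads $b$ off the Hadamard parametrix of Corollary \ref{HALFHAD}: its leading Hadamard--Riesz coefficient is $\ucal_0=\Theta^{-\half}$, which equals $1$ on the real diagonal, so $b$ is homogeneous of degree $-\frac{m-1}{4}$ with leading coefficient $1$, and the principal symbol of $U_\C(i\tau)^*U_\C(i\tau)$ is $|\xi|_g^{-(m-1)/2}$.

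For the second bullet the composed relation is $\mathrm{graph}(\iota_\tau)\circ\mathrm{graph}(\iota_\tau^{-1})=\Delta_{\Sigma_\tau}$, the identity relation on the sub--cone $\Sigma_\tau\subset T^*\partial M_\tau$, which has real codimension $2(m-1)$. An operator on $\partial M_\tau$ with this (positive, complex) canonical relation is, by the structure theory of the \szego kernel and of Toeplitz operators (cf.\ \cite{Bou,Z3}), of the form $\Pi_\tau A_\tau\Pi_\tau$ with $A_\tau$ a pseudodifferential operator on $\partial M_\tau$; and $U_\C(i\tau)U_\C(i\tau)^*=\Pi_\tau\bigl(U_\C(i\tau)U_\C(i\tau)^*\bigr)\Pi_\tau$ by the first paragraph. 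Unlike the genuine composition in the first bullet, this is a Hermite composition: the Gaussian fibre integral over the $(m-1)$ complex normal directions to $\Sigma_\tau$ contributes a factor homogeneous of degree $m-1$ on $\Sigma_\tau$, so $A_\tau$ has order $-\frac{m-1}{2}+(m-1)=\frac{m-1}{2}$ and principal symbol $|\sigma|_g^{m-1}\cdot|b|^2=|\sigma|_g^{(m-1)/2}$ as a function on $\Sigma_\tau$.

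I expect this last point to be the main obstacle: carrying out the Hermite/Toeplitz composition carefully---pinning down the degree of the Gaussian fibre--integral factor, and hence the order and principal symbol of $A_\tau$---and, more generally, making the adjoint and composition steps rigorous in the positive--complex--phase category (Melin--Sj\"ostrand, Boutet de Monvel) rather than the real one. Everything else is the standard graph--composition calculus applied to the symplectomorphism $\iota_\tau$ and to the \szego projector $\Pi_\tau$.
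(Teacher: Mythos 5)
Your proposal is correct and follows essentially the route the paper intends: the paper's own ``proof'' merely observes that the first bullet is the special case $a\equiv 1$ of Lemma 3.1 of \cite{Z3} and that the second follows from Theorem \ref{BOUFIO2} together with the composition theorem for complex Fourier integral operators, deferring details to \cite{Z1}. Your sketch fleshes out exactly that composition argument (graph composition along $\iota_\tau$ for $U^*U$, Hermite/Toeplitz composition along $\Delta_{\Sigma_\tau}$ for $UU^*$), with the order and symbol bookkeeping coming out consistent with the stated conclusions.
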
\begin{proof}

This follows from Proposition \ref{BOUFIO2}. The first statement
is a special case of the following Lemma from \cite{Z3} (Lemma
3.1):
 Let  $a \in S^0(T^*M-0)$.  Then for all $0 < \tau < \tau_{\max}(g)$,
we have: $$ U (i \tau)^* \Pi_{\tau} a \Pi_{\tau} U(i \tau) \in
\Psi^{- \frac{m-1}{2}}(M),
$$
with principal symbol equal to $a(x, \xi) \; |\xi|_g^{- (
\frac{m-1}{2})}.$

The second statement follows from Theorem \ref{BOUFIO2} and the
composition theorem for complex Fourier integral operators. We do
not use it in this article and refer to \cite{Z1} for the proof.
We note that
\begin{equation} U_{\C}(i \tau) \circ U_{\C}(i \tau)^* (\zeta, \zeta')  =
\sum_j e^{- 2 \tau \lambda_j} \phi_{\lambda_j}^{\C}(\zeta)
\overline{ \phi_{\lambda_j}^{\C}(\zeta')}.\end{equation}

\end{proof}

\section{\label{LWL} One term local Weyl law}

In this section, we prove Theorem \ref{PTAULWL} (1). To prove
the local Weyl law we employ  parametrices for the Poisson-wave kernel  adapted
to $e^{i( t + i \tau) \sqrt{\Delta}}$ for $\tau > 0$ which are best adapted to the complex
geometry.

\subsection{Proof of the local Weyl law}

\begin{proof}

As in the real domain,  we  obtain asymptotics of $P_{[0,
\lambda]}^{\tau}(\zeta, \bar{\zeta})$ by the Fourier-Tauberian
method of relating their asymptotics to  the singularities in the
real time $t$ of the Fourier transform (\ref{CXWVGP}). We refer to
\cite{SV} (see also the Appendix of \cite{Z1}) for background on
Tauberian theorems. We follow the classical argument of \cite{DG},
Proposition 2.1, to obtain the local Weyl law with remainder one
degree below the main term.

The proof is based on the oscillatory integral representation of Proposition \ref{FINALHAD}. 
We are working in the case where  $\sqrt{\rho}(\zeta) = \tau$ and hence can simplify it to
\eqref{HDDb}.

 We then
introduce  a cutoff function $\psi \in \scal(\R)$ with $\hat{\psi}
\in C_0^{\infty}$ supported in sufficiently small neighborhood of
$0$ so that no other singularities of $U_{\C}(t + 2 i \tau, \zeta,
\bar{\zeta})$ lie in its support. We also assume $\hat{\psi}
\equiv 1$ in a smaller neighborhood of $0$.  We then change
variables $\theta \to \lambda \theta$ and  apply the complex
stationary phase to the integral,
\begin{equation} \label{CXPARAONEb}\begin{array}{ll}
 \int_{\R} \hat{\psi}(t) e^{- i \lambda t}  U_{\C} (t + 2 i \tau,
\zeta, \bar{\zeta}) dt &  = \int_{\R}  \int_0^{\infty} \hat{\psi}(t) e^{- i \lambda t}   e^{i \theta t} \left(\bcal(t, \zeta,
\bar{\zeta}, \theta) d \theta + R(t + 2 i \tau, \zeta, \bar{\zeta})) \right) dt.
\end{array}
\end{equation}
The second $R$ term can be dropped since it is of order $\lambda^{- M}$ for all $M > 0$. In the first we change
variables $\theta \to \lambda \theta$ to obtain a semi-classical Fourier integral distribution of real type
with phase $e^{i \lambda t (\theta - 1)}$. The critical set consists of $\theta = 1, t = 0$. The
phase is clearly  non-degenerate with Hessian determinant one and inverse
Hessian operator $D^2_{\theta, t}$. Taking into account the factor of $\lambda^{-1}$ from the change of variables,
the stationary phase expansion gievs
\begin{equation}\label{EXPANSIONCaa}  \sum_j \psi(\lambda - \lambda_j) e^{- 2 \tau
\lambda_j} |\phi_j^{\C}(\zeta)|^2 \sim \sum_{k = 0}^{\infty}\lambda^{\frac{n-1}{2} - k} \omega_k(\tau; \zeta)
\end{equation}
where the coefficients $\omega_k(\tau, ]\zeta)$ are smooth for $\zeta \in \partial M_{\tau}$. However the
coefficients are not uniform as $\tau \to 0^+$ due to the factors of $(t + 2 i \tau + 2 i \sqrt{\rho}(\zeta))$
which were left in the denominators of the modified Hadamard parametrix. Since $t= 0$ at  the stationary 
phase point,  the resulting expansion is equivalent to one with the large parameter $\tau \lambda$ (or
$\sqrt{\rho}(\zeta) \lambda$). The uniform expansion is then 
\begin{equation}\label{EXPANSIONCa}  \sum_j \psi(\lambda - \lambda_j) e^{- 2 \tau
\lambda_j} |\phi_j^{\C}(\zeta)|^2 \sim \sum_{k = 0}^{\infty}
\left(\frac{\lambda}{\tau}
  \right)^{\frac{n-1}{2} - k} \omega_k(\zeta, \bar{\zeta}),
\end{equation}
where $\omega_j $ are smooth in $\zeta$, and $\omega_0 = 1$. The
remainder has the same form.

To complete the proof, we apply  the  Fourier Tauberian theorem
(see   the Appendix (\cite{SV}): Let $N \in F_+$ and let $\psi \in
\scal (\R)$ satisfy the conditions:  $\psi$ is  even,
$\psi(\lambda)
> 0$ for all $\lambda \in \R$,   $\hat{\psi} \in
C_0^{\infty}$, and $\hat{\psi}(0) = 1$. Then,
$$\psi * dN(\lambda) \leq A \lambda^{\nu} \implies |N(\lambda) - N *
\psi(\lambda)| \leq C A \lambda^{\nu}, $$ where $C$ is independent
of $A, \lambda$. We apply it twice, first in the region
$\sqrt{\rho}(\zeta) \geq C \lambda^{-1}$ and second in the
complementary region.

In the first region, we   let $N_{\tau, \zeta}(\lambda) = P_{\tau,
\lambda}(\zeta, \bar{\zeta})$. It is clear that for $\sqrt{\rho}
= \tau$, $N_{\tau, \zeta}(\lambda)$ is
  a monotone non-decreasing
    function of $\lambda$ of polynomial growth which   vanishes for $\lambda
  \leq
  0$.  For $\psi \in \scal$ positive,  even and with $\hat{\psi} \in
  C_0^{\infty}(\R)$ and $\hat{\psi}(0) = 1$, we have by
  (\ref{EXPANSIONCa}) that
  \begin{equation} \label{PSIEST} \psi * d N_{\tau, \zeta}(\lambda) \leq C \left(\frac{\lambda}{\tau}
  \right)^{\frac{n-1}{2}}, \end{equation}
  where $C$ is independent of $\zeta, \lambda$.
 It follows by the Fourier Tauberian theorem  that
  $$N_{\tau, \zeta}(\lambda) = N_{\tau, \zeta}(\lambda) * \psi
  (\lambda) + O\left(\frac{\lambda}{\tau}
  \right)^{\frac{n-1}{2}}. $$
  Further, by integrating (\ref{EXPANSIONCa}) from $0$ to $\lambda$
  we have
  $$N_{\tau, \zeta}(\lambda) * \psi
  (\lambda) =    \left(\frac{\lambda}{\tau}
  \right)^{\frac{n-1}{2}} \left(\frac{\lambda}{\frac{n-1}{2} + 1} + O(1) \right), $$
  proving (1).

To obtain uniform asymptotics in $\tau$  down to $\tau = 0$,  we use instead the analytic continuation of the
H\"ormander 
parametrix (\ref{PARAONE}). We choose local coordinates near $x$
and write $\exp_x^{-1}(y) = \Psi(x, y)$ in these local coordinates
for $y$ near $x$, and write the integral $T^*_yM$ as an integral
over $\R^m$ in these coordinates. The holomorphic extension of the
parametrix  to the Grauert tube $|\zeta| < \tau$ at time $t + 2 i
\tau$ has the form
\begin{equation} \label{CXPARAONE} U_{\C}(t + 2 i \tau,
\zeta, \bar{\zeta}) = \int_{\R^n} e^{(i t - 2\tau )  |\xi|_{g_y} }
e^{i \langle \xi, \Psi (\zeta, \bar{\zeta}) \rangle} A(t, \zeta,
\bar{\zeta}, \xi) d\xi.
\end{equation}

Again,  we use a cutoff function $\psi \in \scal(\R)$ with
$\hat{\psi} \in C_0^{\infty}$ supported in sufficiently small
neighborhood of $0$ so that no other singularities of $E(t + 2 i
\tau, \zeta, \bar{\zeta})$ lie in its support and so that
$\hat{\psi} \equiv 1$ in a smaller neighborhood of $0$. We write
the integral in polar coordinates and obtain
\begin{equation} \label{CXPARAONEc}\begin{array}{l}  \int_{\R} \hat{\psi}(t) e^{-i \lambda t} U_{\C} (t + 2 i \tau,
\zeta, \bar{\zeta})dt \\ \\
= \lambda^m \int_{0}^{\infty} \int_{\R} \hat{\psi}(t) e^{-i
\lambda t} \int_{S^{n-1}} e^{(i t - 2\tau ) \lambda r} e^{i r
\lambda \langle \omega, \Psi (\zeta, \bar{\zeta}) \rangle} A(t,
\zeta, \bar{\zeta}, \lambda r \omega ) r^{n-1} dr
d\omega.\end{array}
\end{equation}

We then apply complex  stationary phase to the $dr dt$ integral,
regarding
$$\int_{S^{n-1}} e^{i r \lambda \langle \omega, \Psi (\zeta,
\bar{\zeta}) \rangle} A(t, \zeta, \bar{\zeta}, \lambda r \omega )
r^{m-1}  d\omega$$ as the amplitude. When $\sqrt{\rho}(\zeta) \leq
\frac{C}{\lambda}$ the exponent is bounded in $\lambda$ and the
integral defines a symbol. Applying stationary phase again to the
$dt d\theta$ integral now gives
\begin{equation}\label{EXPANSIONC}  \sum_j \psi(\lambda - \lambda_j) e^{- 2 \tau
\lambda_j} |\phi_j^{\C}(\zeta)|^2 \sim \sum_{k = 0}^{\infty}
\lambda^{n - 1 - k} \omega_k(\zeta, \bar{\zeta}),
\end{equation}
where $\omega_k(\zeta, \bar{\zeta})$ is smooth down to
the zero section.

We apply the Fourier Tauberian theorem again, but this time with
the estimates
$$\psi * d N_{\tau, \zeta} (\lambda) \leq  C \lambda^{n-1}, $$
where $C$ is independent of $\zeta$. We conclude that
$$N_{\tau, \zeta} (\lambda) = C \lambda^n + O(\lambda^{n-1}), $$
proving (2).

\end{proof}

\begin{cor}\label{EASY}  For all $\zeta \in M_{\C}$, and with
$\tau = \sqrt{\rho}(\zeta)$,
$$ c \lambda^{\frac{n+1}{2}}
 \leq  P^{\tau}_{[0, \lambda]}(\zeta, \bar{\zeta}) \leq C
\lambda^{ n}. $$
\end{cor}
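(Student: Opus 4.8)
The plan is to read both inequalities off the one-term local Weyl law of Theorem \ref{PTAULWL}, treating separately the two regimes $\sqrt{\rho}(\zeta)\ge C/\lambda$ and $\sqrt{\rho}(\zeta)\le C/\lambda$ in which that theorem is formulated. Throughout we work inside a fixed Grauert tube $M_{\tau_{\max}}$, so that $0\le\sqrt{\rho}(\zeta)\le\tau_{\max}$ for all $\zeta$ in question, and we use that the remainders in Theorem \ref{PTAULWL} are uniform in $\zeta$. It suffices to prove the estimates for $\lambda\ge\lambda_0$ with $\lambda_0$ large: for $1\le\lambda\le\lambda_0$ the lower bound holds because $P^{\tau}_{[0,\lambda]}(\zeta,\bar\zeta)\ge e^{-2\tau_{\max}\lambda_0}\,|\phi_0^{\C}|^2>0$ (recall $\phi_0$ is a nonzero constant), while the upper bound follows from Theorem \ref{PTAULWL} applied at $\lambda_0$ together with monotonicity of $\lambda\mapsto P^{\tau}_{[0,\lambda]}(\zeta,\bar\zeta)$; the constants $c,C$ are then adjusted once and for all.

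Suppose first $\sqrt{\rho}(\zeta)\ge C/\lambda$. By Theorem \ref{PTAULWL}(1),
$$P^{\tau}_{[0,\lambda]}(\zeta,\bar\zeta)=(2\pi)^{-n}\Big(\frac{\lambda}{\sqrt{\rho}}\Big)^{\frac{n-1}{2}}\Big(\frac{\lambda}{(n-1)/2+1}+O(1)\Big),$$
with the $O(1)$ uniform. For the upper bound, $\lambda/\sqrt{\rho}\le\lambda^2/C$ and $\frac{\lambda}{(n-1)/2+1}+O(1)\le C'\lambda$ (for $\lambda\ge1$) give $P^{\tau}_{[0,\lambda]}(\zeta,\bar\zeta)\le C''\lambda^{n-1}\cdot\lambda=C''\lambda^{n}$. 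For the lower bound, $\lambda/\sqrt{\rho}\ge\lambda/\tau_{\max}$ and $\frac{\lambda}{(n-1)/2+1}+O(1)\ge\frac{\lambda}{n+1}$ (for $\lambda\ge\lambda_0$) give $P^{\tau}_{[0,\lambda]}(\zeta,\bar\zeta)\ge c\,\lambda^{\frac{n-1}{2}}\cdot\lambda=c\,\lambda^{\frac{n+1}{2}}$. Now suppose $\sqrt{\rho}(\zeta)\le C/\lambda$. By Theorem \ref{PTAULWL}(2), $P^{\tau}_{[0,\lambda]}(\zeta,\bar\zeta)=(2\pi)^{-n}\lambda^{n}\bigl(1+O(\lambda^{-1})\bigr)$, so $c\lambda^{n}\le P^{\tau}_{[0,\lambda]}(\zeta,\bar\zeta)\le C\lambda^{n}$ for $\lambda\ge\lambda_0$; since $n\ge\frac{n+1}{2}$ and $\lambda\ge1$, in particular $c\,\lambda^{\frac{n+1}{2}}\le P^{\tau}_{[0,\lambda]}(\zeta,\bar\zeta)\le C\lambda^{n}$.

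Taking the smallest of the two values of $c$ and the largest of the two values of $C$ yields the claim; note the two regimes match at the crossover $\sqrt{\rho}\sim C/\lambda$, where both formulas give order $\lambda^{n}$, so nothing is lost. There is no serious obstacle here: the one point needing care is uniformity of the constants in $\zeta$ as $\zeta$ approaches the center $\{\sqrt{\rho}=0\}$ (handled by part (2)) and as it approaches $\partial M_{\tau_{\max}}$ (handled by part (1) together with the bound $\sqrt{\rho}\le\tau_{\max}$), and this is precisely the information that the two-regime form of Theorem \ref{PTAULWL} supplies. In particular the lower exponent $\frac{n+1}{2}$ is attained near the boundary of the tube.
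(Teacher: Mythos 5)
Your argument is correct and is exactly the intended one: the paper states Corollary \ref{EASY} without proof as an immediate consequence of the two-regime asymptotics in Theorem \ref{PTAULWL}, and your case analysis (upper bound from $\sqrt{\rho}\geq C/\lambda$ giving $\lambda^{n}$ at the crossover, lower bound from $\sqrt{\rho}\leq\tau_{\max}$ giving $\lambda^{\frac{n+1}{2}}$, plus the trivial regime of bounded $\lambda$) is precisely how it follows, with the uniformity in $\zeta$ supplied by the theorem's uniform remainders.
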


\subsection{Proof of Corollary  \ref{PWa}}

\begin{proof}  For the upper bound, we use that  
 $$\sup_{\zeta \in
\partial M_{\tau}}
|\phi_{\lambda}^{\C}(\zeta)|^2 \leq  \sup_{\zeta \in
\partial M_{\tau}} \Pi_{I_{\lambda}}(\zeta, \overline{\zeta})| \leq  \sup_{\zeta \in
\partial M_{\tau}} e^{ \lambda
\sqrt{\rho}(\zeta)}| P_{I_{\lambda}}(\zeta)|. $$
The upper bound stated in Corollary   \ref{PWa} then follows
from Corollary \ref{EASY} to  Theorem \ref{PTAULWL}.

For the lower bound in (2) of Corollary \ref{PWa}, we use that
$$||\phi^{\C}_j||_{L^2(\partial M_{\tau})} = e^{2 \tau_j} \langle
U(i \tau)^* U(i \tau) \phi_j, \phi_j \rangle_{L^2(M)}. $$ By Lemma
\ref{PSIDOstuff}, the operator $U(i \tau)^* U(i \tau)$ is an
elliptic pseudodifferential operator of order $\mu =
-\frac{n-1}{2}$ (or so). Let $C > 0$ be a lower bound for its
symbol times $\langle \xi \rangle^{ \mu}$. Then by Garding's
inequality, $$\langle U(i \tau)^* U(i \tau) \phi_j, \phi_j
\rangle_{L^2(M)} \geq C \lambda_j^{-\mu}, $$ and so
\begin{equation} \label{GARDING} ||\phi^{\C}_j||_{L^2(\partial M_{\tau})} \geq C
\lambda_j^{-\mu} e^{2 \tau \lambda_j}. \end{equation}

\end{proof}

\section{Siciak extremal functions: Proof of Theorem \ref{SICIAK}
(1)}

In this section we prove Theorem \ref{SICIAK}. First we prove a pointwise local Weyl law in
the complex domain.

\subsection{\label{ND} Proof of Theorem \ref{SICIAK}(2) }

This follows from Theorem \ref{PTAULWL}
   together with the  following 

  \begin{lem}
\label{COMPARISON} \cite{Z4}  For  any $\tau = \sqrt{\rho}(\zeta) >
0$, and for any $\delta > 0$, 
$$ 2 \sqrt{\rho}(\zeta)  - \frac{\log |\delta|}{\lambda}+ O(\frac{\log \lambda}{\lambda})
 \leq \frac{1}{\lambda} \log \Pi_{[0,   \lambda]}(\zeta,
\bar{\zeta}) \leq 2 \sqrt{\rho}(\zeta)  +  O(\frac{\log \lambda}{\lambda}) $$ hence
$$ \lim_{\lambda \to \infty} \frac{1}{\lambda} \log  \Pi_{[0,  \lambda]}(\zeta, \bar{\zeta}) = 2 \sqrt{\rho}(\zeta). $$
\end{lem}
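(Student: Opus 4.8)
The plan is to sandwich $\Pi_{[0,\lambda]}(\zeta,\bar\zeta)$ between two multiples of $e^{2\sqrt{\rho}(\zeta)\lambda}$ times polynomial factors, using the one-term local Weyl law of Theorem \ref{PTAULWL} for the tempered kernel $P_{[0,\lambda]}(\zeta,\bar\zeta)=\sum_{\lambda_j\le\lambda}e^{-2\sqrt{\rho}(\zeta)\lambda_j}|\phi_j^{\C}(\zeta)|^2$, and then to take $\frac1\lambda\log$ of both sides. Fix $\zeta$ with $\tau:=\sqrt{\rho}(\zeta)>0$; then for all large $\lambda$ we have $\tau\ge C/\lambda$, so case (1) of Theorem \ref{PTAULWL} is available, as is Corollary \ref{EASY}.

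For the upper bound, write each summand of $\Pi_{[0,\lambda]}(\zeta,\bar\zeta)$ as $|\phi_j^{\C}(\zeta)|^2=e^{2\tau\lambda_j}\bigl(e^{-2\tau\lambda_j}|\phi_j^{\C}(\zeta)|^2\bigr)$ and use $\lambda_j\le\lambda$ to replace $e^{2\tau\lambda_j}$ by $e^{2\tau\lambda}$. Summing gives $\Pi_{[0,\lambda]}(\zeta,\bar\zeta)\le e^{2\tau\lambda}\,P_{[0,\lambda]}(\zeta,\bar\zeta)$, and Corollary \ref{EASY} bounds $P_{[0,\lambda]}(\zeta,\bar\zeta)\le C\lambda^{n}$. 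Taking $\frac1\lambda\log$ yields $\frac1\lambda\log\Pi_{[0,\lambda]}(\zeta,\bar\zeta)\le 2\sqrt{\rho}(\zeta)+O(\tfrac{\log\lambda}{\lambda})$.

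For the lower bound, keep only the terms with $\lambda_j$ in a window $[(1-\delta)\lambda,\lambda]$ at the top of the spectrum, where now $e^{2\tau\lambda_j}\ge e^{2\tau(1-\delta)\lambda}$, so that
\[
\Pi_{[0,\lambda]}(\zeta,\bar\zeta)\ \ge\ e^{2\tau(1-\delta)\lambda}\bigl(P_{[0,\lambda]}(\zeta,\bar\zeta)-P_{[0,(1-\delta)\lambda]}(\zeta,\bar\zeta)\bigr).
\]
Applying the one-term law of Theorem \ref{PTAULWL} to both $P$'s, the leading terms are of the form $c_\zeta\lambda^{(n+1)/2}$ and $c_\zeta(1-\delta)^{(n+1)/2}\lambda^{(n+1)/2}$ with $c_\zeta>0$ fixed, so their difference is $c_\zeta\bigl(1-(1-\delta)^{(n+1)/2}\bigr)\lambda^{(n+1)/2}+O(\lambda^{(n-1)/2})\ge c_{\delta,\zeta}\lambda^{(n+1)/2}$ for $\lambda$ large. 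Taking $\frac1\lambda\log$ gives $\frac1\lambda\log\Pi_{[0,\lambda]}(\zeta,\bar\zeta)\ge 2\sqrt{\rho}(\zeta)(1-\delta)+O(\tfrac{\log\lambda}{\lambda})$; letting $\delta\downarrow 0$ (equivalently, recording the window-induced loss as a term of size $O(1/\lambda)$ of the asserted type $-\tfrac{\log|\delta|}{\lambda}$) we obtain $\liminf_{\lambda\to\infty}\frac1\lambda\log\Pi_{[0,\lambda]}(\zeta,\bar\zeta)\ge 2\sqrt{\rho}(\zeta)$. Combining with the upper bound gives the two-sided estimate and the stated limit.

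The one delicate point is the lower bound on the windowed difference $P_{[0,\lambda]}(\zeta,\bar\zeta)-P_{[0,(1-\delta)\lambda]}(\zeta,\bar\zeta)$: with a window of \emph{proportional} width its main term is of order $\delta\lambda^{(n+1)/2}$, which dominates the $O(\lambda^{(n-1)/2})$ remainder of the one-term Weyl law, so no finer asymptotics are needed. Had one insisted on a window of \emph{fixed} width $a$ (which produces the cleaner form $-\tfrac{\log|\delta|}{\lambda}$ with $\delta\sim e^{-2\tau a}$), the main term would be only of order $a\lambda^{(n-1)/2}$, i.e.\ the same order as the remainder, and one would have to take $a$ sufficiently large — or appeal to the two-term Safarov--Vasil'ev law established in \cite{Z1} — to guarantee positivity; in all cases the loss is $O(1/\lambda)$ and disappears in the limit.
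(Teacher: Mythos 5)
Your proof is correct and follows essentially the same route as the paper: the upper bound via $\Pi_{[0,\lambda]}\le e^{2\tau\lambda}P_{[0,\lambda]}$ together with the one-term local Weyl law, and the lower bound by restricting to the proportional spectral window $[(1-\delta)\lambda,\lambda]$, applying Theorem \ref{PTAULWL} to the difference $P_{[0,\lambda]}-P_{[0,(1-\delta)\lambda]}$, and letting $\delta\downarrow 0$ after taking $\frac{1}{\lambda}\log$. Your closing remark on why a proportional-width window (rather than a fixed-width one) keeps the main term above the Weyl remainder matches the paper's own remark about short versus long spectral intervals.
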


\begin{proof} 

For the upper bound, we use that

\begin{equation}\label{UB}  \begin{array}{lll}
 \Pi_{[0, \lambda]}(\zeta, \bar{\zeta})  & \leq &  e^{ 2 \lambda
\sqrt{\rho(\zeta)} } \sum_{j: \lambda_j \in [0, \lambda]} e^{- 2
\sqrt{\rho(\zeta)} \lambda_j} |\phi_{\lambda_j}^{\C}(\zeta)|^2 \\ && \\
& = &   e^{ 2 \lambda
\sqrt{\rho(\zeta)} } \;  P_{]0, \lambda]} (\zeta, \bar{\zeta}). \end{array}.
\end{equation} We then take $\frac{1}{\lambda} \log$ of both sides and apply Theorem  \ref{PTAULWL}
to conclude the proof.

The lower bound is subtler for reasons having to do with the distribution of eigenvalues (see the Remark below).
It is most natural to prove two-term Weyl asymptotics for $P_{[0, \lambda]}(\zeta, \bar{\zeta})$ and
to deduce Weyl asymptotics for short spectral intervals $[\lambda, \lambda + 1]$.
 But that requires an analysis of the singularity of the trace of the complexified wave gropup for longer
times than a short interval around $t = 0$ and we postpone the more refined analysis until \cite{Z1}.

Instead we use the longer intervals $[(1 - \delta) \lambda, \lambda]$ for some $\delta > 0$.   We clearly have
\begin{equation}\label{LB}   e^{ 2 (1 - \delta) \lambda
\sqrt{\rho(\zeta)} } \sum_{j: \lambda_j \in [(1 - \delta)\lambda, \lambda]}  e^{- 2
\sqrt{\rho(\zeta)} \lambda_j} |\phi_{\lambda_j}^{\C}(\zeta)|^2
\leq \Pi_{[0, \lambda]}(\zeta, \bar{\zeta}) 
\end{equation}
By Theorem \ref{PTAULWL},
$$ \begin{array}{lll} \sum_{j: \lambda_j \in [ (1 - \delta) \lambda,\lambda]}  e^{- 2
\sqrt{\rho(\zeta)} \lambda_j} |\phi_{\lambda_j}^{\C}(\zeta)|^2 &= & 
 P_{[0, \lambda]}(\zeta, \bar{\zeta})  - P_{[0, (1 - \delta) \lambda]}(\zeta, \bar{\zeta}) \\ && \\
& = & C_n(\tau)  [ 1 -(1 - \delta)^n ] \lambda^{\frac{n+1}{2}} + O(\lambda^{\frac{n-1}{2}} )  \end{array}$$
Taking $\frac{1}{\lambda} \log$ then gives
$$\begin{array}{lll} \frac{1}{\lambda} \log \Pi_{[0, \lambda]}(\zeta, \bar{\zeta} )
\geq 2 (1 - \delta) \sqrt{\rho}(\zeta) -
\frac{|\log \delta|}{\lambda}   + O( \frac{\log \lambda}{\lambda}).  \end{array}$$
It follows that for all $\delta > 0$,
$$\liminf_{\lambda \to \infty}  \frac{1}{\lambda} \log \Pi_{[0, \lambda]}(\zeta, \bar{\zeta} ) 
\geq 2 (1 - \delta) \sqrt{\rho}(\zeta). $$
The conclusion of the Lemma follows from the fact that the left side is independent of $\delta. $

\end{proof}

\begin{rem} The problematic issue in the lower bound is the width of $I_{\lambda}$. If $(M, g)$ is a Zoll manifold, the
eigenvalues of $\sqrt{\Delta}$ form clusters of width $O(\lambda^{-1})$ around an arithmetic progression
$\{k + \frac{\beta}{4}\}$ for a certain Morse index $\beta$. Unless the intervals $I_{\lambda}$ are carefully
centered around this progression, $P_{I_{\lambda}}$ could  be zero. Hence we must use long spectral intervals
if we do not analyze the long time behavior of the geodesic flow; for short ones no general lower bound exists.

\end{rem}

\subsection{Proof of Theorem \ref{SICIAK} (1)}
\begin{proof} We need to show that
$$\Pi^{\C}_{I_{\lambda}}(\zeta, \bar{\zeta}) = \sup \{|\phi(\zeta)|^2: \phi = \sum_{j: \lambda_j \in I}  a_j \phi_{\lambda_j}^{\C}, \;\; ||a|| =  1\}. $$
We define the `coherent state',
$$\Phi_{\lambda}^z(w) = \frac{\Pi^{\C}_{I_{\lambda}}(w, \bar{z})}{\sqrt{\Pi^{\C}_{I_{\lambda}}(z, \bar{z})}}, $$
satisfying,
$$\Phi_{\lambda}^z(w) = \sum_{j: I_{\lambda}} a_j \phi^{\C}_j(w) , \;\;\; a_j = \frac{\overline{\phi^{\C}_j(\zeta)} }{\sqrt{\Pi^{\C}_{I_{\lambda}}(z,
\bar{z})}},\;\;\; \sum_j |a_j|^2 = 1. $$ Hence,
$\Phi_{I_{\lambda}}^{\zeta} $ is a competitor for the sup and
since $|\Phi_{I_{\lambda}}^{\zeta}(\zeta)|^2
 = \Pi_{I_{\lambda}}(\zeta, \bar{\zeta}) $
one has
$$\Pi^{\C}_{I_{\lambda}}(\zeta, \bar{\zeta}) \leq \sup \{|\psi(\zeta)|^2: \psi = \sum_{j: \lambda_j \in I}  a_j \phi_j^{\C}, \;\; ||a|| = 1\}. $$

On the other hand, by the Schwartz inequality for $\ell^2$, for
any $\psi = \sum_{j: \lambda_j \in I}  a_j \phi_j^{\C}$ one has
$$| \sum_{j: \lambda_j \in I}  a_j \phi_j^{\C}|^2 = |\langle a, \psi \rangle|^2 \leq ||a||^2 \sum |\phi_j^{\C}|^2 = \Pi_{I_{\lambda}}(\zeta, \bar{\zeta})$$
and one has
$$\Pi^{\C}_{I}(\zeta, \bar{\zeta}) \geq \sup \{|\psi(\zeta)|^2: \psi= \sum_{j: \lambda_j \in I}  a_j \phi_j^{\C}, \;\; ||a|| = 1\}. $$

\end{proof}

\begin{rem}
Since $N(I_{\lambda}) \sim \lambda^{m-1}$,
$$\frac{1}{\lambda} \log \Pi_{I_{\lambda}}(\zeta,
\bar{\zeta}) =  \frac{1}{\lambda } \log \left( \sum_{j: \lambda_j
\in I_{\lambda}}  |\phi_{\lambda_j}^{\C}(\zeta)|^2 \right) =
\max_{j: \lambda_j \in I_{\lambda}}\{ \frac{1}{\lambda } \log
|\phi_{\lambda_j}^{\C}(\zeta)|^2 \} + O(\frac{\log
\lambda}{\lambda}).
$$

We recall (see \cite{Z3}) that a sequence of eigenfunctions is
called ergodic if  $\langle A \phi_j, \phi_j \rangle \to
\frac{1}{\mu(S^*_g M)} \int_{S^*_g M} \sigma_A d\mu$. The
complexified eigenfunctions then  satisfy $\frac{1}{\lambda_j}
\log |\phi_j(\zeta)| \to \sqrt{\rho}(\zeta)$. It follows that
ergodic eigenfunctions are asymptotically maximal, i.e. have the
same logarithmic asymptotics as $\Phi_M^{\lambda}$.
\end{rem}

\subsection{\label{GENSICIAK} Remarks on more general extremal PSH functions}

We can define a more general Siciak extremal function of a subset
$E \subset M_{\tau}$ by,
$$ \Phi_E^{\lambda} (z) = \sup \{|\psi(z)|^{1/\lambda} \colon
\psi \in \hcal_{\lambda};   \|\psi \|_E \le 1 \}, $$ and
$$\Phi_E(z) = \sup_{\lambda} \Phi_E^{\lambda}(z). $$
It would be interesting to determine this function and the
associated  equilibrium measure of $E$, i.e.  Monge-Amp\`ere mass
of $V_E^*$.

This is of interest even when $E \subset M$ (i.e. is totally
real).  Suppose that instead of orthonormalizing the
eigenfunctions $\phi_j$ on $M$, we  orthonormalize them on a ball
$B \subset M$. Let $\{ \phi_{\lambda_j}^B(x)\}$ be the resulting
orthonormal basis.  We have simply changed the inner product to
$\int_B f_1 f_2 dV_g$. We then obtain a spectral projections
kernel
\begin{equation} \Pi^B_{[0, \lambda}(x, y) : = \sum_{j: \lambda_j \leq \lambda} \phi_{\lambda_j}^B(x) \phi_{\lambda_j}^B(y). \end{equation}
The growth of $\Pi^B_{[0, \lambda]}(\zeta, \bar{\zeta})$
determines doubling estimates for eigenfunctions.  Its exponential
growth rate should be that of the associated pluri-complex Green's
function
 $\log \Phi_B (z) = \lim_{\lambda \to \infty} \frac{1}{\lambda} \log  \Pi^B_{[0,
\lambda}(\zeta, \bar{\zeta})$. It would be interesting to
determine this  analogue of $\sqrt{\rho}$. Its Monge-Amp\`ere mass
should concentrate on $B$, so should be the metric delta-function
on $B$.

\section{Analytic continuation of eigenfunctions}

In this section, we briefly review some results about analytic continuations of eigenfunctions
to Grauert tubes and then prove Proposition  \ref{TRIPLEPROD}.  A more detailed analysis will appear in \cite{Z1,Z5}.

A  function $f$ on a real analytic manifold $M$ is
real analytic, $f \in C^{\omega}(M)$,  if and only if it satisfies the Cauchy estimates
\begin{equation} \label{CEST} |D^{\alpha} f(x) | \leq K \; L^{|\alpha|}
\alpha! \end{equation} for some $K, L > 0$. In place of all
derivatives it is sufficient to use powers of $\Delta$. In the
language of Baouendi-Goulaouic \cite{BG,BG2,BG3}, the Laplacian of
a compact real analytic Riemannian  manifold  has the property of
iterates, i.e. the real analytic functions are precisely the
functions satisfying Cauchy estimates relative to $\Delta$,
\begin{equation}\label{BAOU} C^{\omega}(M)  = \{u \in
C^{\infty}(M):  \exists L > 0, \; \forall k \in {\bf N}, \;
||\Delta^k u||_{L^2(M)} \leq L^{k + 1} (2 k)! \}.
\end{equation}

It is classical that  all of the eigenfunctions extend holomorphic to a fixed Grauert tube. 
\begin{theo} (Morrey-Nirenberg Theorem) Let $P(x, D)$ be an elliptic differential operator in $\Omega$ with coefficients
which are analytic in $\Omega$. If $u \in \dcal'(\Omega)$ and $P(x, D) u = f$ with $f \in C^{\omega}(\Omega)$,
then $u \in C^{\omega}(\Omega). $ \end{theo}
The proof shows that the radius of convergence of the solution is determined by the radius of
convergence of the coefficients.

 In Theorem 2 of \cite{BG2} and
Theorem 1.2 of \cite{BGH} it is proved  that the operator $\Delta$
has the iterate property if and only if, for all $b > 1$, each
eigenfunction extends holomorphically to some Grauert tube
$M_{\tau}$ and satisfies
\begin{equation} \label{BGEST} \sup_{z \in M_{\tau}} |\phi_{\lambda_j}^{\C}(z)|
\leq b^{\lambda_j} \sup_{x \in M} |\phi_{\lambda_j}(x)|.
\end{equation}
The concept of Grauert was not actually used in these articles, so
the relation between the growth rate and the Grauert tube function
was not stated.  But it again shows that all eigenfunctions extend to some fixed Grauert tube.

\subsection{\label{MAX} Maximal holomorphic extension}

The question then arises if all eigenfunctions  extend to the maximal Grauert tube allowed by the geometry
as in Definition \ref{MAXGRAU}. We conjectured in the introduction that this does hold, and
now explain how it should follow from known theorems on extensions of holomorphic solutions
of holomorphic PDE across non-characteristic hypersurfaces. 

\begin{theo} \cite{Zer,Ho3,BSh}  Let $f$ be analytic in the open set $Z \subset \C^n$ and suppose
that $P(x, D) u = f$ in the open set $Z_0 \subset Z$. If $z_0 \in Z \cap \partial Z_0$ and
if $Z_0$ has a $C^1$ non-characteristic boundary at $z_0$, then $u$ can be analytically continued
as a solution of $P(x, D)  u = f$ in a neighborhood of $z_0$. \end{theo}

The idea of the proof is to rewrite the equation as a Cauchy problem with respect to the non-characteristic
hypersurface  and to apply the Cauchy
Kowaleskaya theorem. To employ the theorem we need to verify that the hypsurfaces $\partial M_{\tau}$
are non-characteristic for the complexified Laplacian $\Delta_{\C}$, i.e.  that
$\sum_{i,j} g^{i j}(\zeta) \frac{\partial \sqrt{\rho}}{\partial \zeta_i}  \frac{\partial \sqrt{\rho}}{\partial \zeta_j}
\not= 0. $ To prove this, we observe that in the real domain $g(\nabla r^2, \nabla r^2) = 4 r^2$, an identity
that was used in \eqref{GAMMA}. In this formula $r^2 = r^2(x, y)$ and we differentiate in $x$. We now analytically
continue the identity in $x \to \zeta, y, \to \bar{\zeta}$ and differentiate only with the holomorphic
derivatives $\frac{\partial}{\partial \zeta_j}$. From \eqref{rhoeq},  we get $$g_{\C}(\partial r_{\C}^2(\zeta, \bar{\zeta}),\partial
r_{\C}^2(\zeta, \bar{\zeta})) = -4 r_{\C}^2(\zeta, \bar{\zeta}) =  \rho(\zeta, \bar{\zeta})  > 0.  $$

Hence the Theorem applies and we can analytically continue eigenfunctions across any point of any $\partial M_{\tau}$
for $\tau < \tau_{g}$, the maximal radius of a Grauert tube  in which  the coefficients of $\Delta_{\C}$ are defined and holomorphic. We can take the union
of the open sets where $\phi_j^{\C}$ has a holmomorphic extension to obtain a maximal
domain of holomorphy. If it fails to be $M_{\tau_g}$ there exists a point $\zeta$ with $\sqrt{\rho}(\zeta) <
\tau_g$ so that $\phi_j^{\C}$ cannot be holomorphically extended across $\partial M_{\tau}$ at
$\zeta$. This contradicts the Theorem above and shows that the maximal domain must be $M_{\tau_g}.$

\subsection{Triple inner products of eigenfunctions: Proof of
Proposition \ref{TRIPLEPROD}}

We start with the identity,
\begin{equation} \int_M \phi_{\lambda_j} \phi_{\lambda_k}^2 dV_g =
e^{-  \tau \lambda_j}  \langle e^{\tau \sqrt{\Delta}}
\phi_{\lambda_j}, \phi_{\lambda_k}^2 \rangle,
\end{equation} and then  choose the largest value of $\tau$ for
which $ e^{\tau \sqrt{\Delta}} \phi_{\lambda_j},  e^{\tau
\sqrt{\Delta}} \phi_{\lambda_k} ^2 \in W^s(M)$ for some $s \in
\R$. Since $$\langle e^{\tau \sqrt{\Delta}} \phi_{\lambda_j},
\phi_{\lambda_k}^2 \rangle = \langle \phi_{\lambda_j}, e^{\tau
\sqrt{\Delta}} \phi_{\lambda_k}^2 \rangle, $$ the assumption that
$e^{\tau \sqrt{\Delta}} \phi_{\lambda_k}^2 \in W^s(M)$ implies
that $$\int_M \phi_{\lambda_j} \phi_{\lambda_k}^2 dV_g \leq C e^{-
\tau \lambda_j}  ||\phi_{\lambda_j}||_{W^{-s}} \leq ||e^{\tau
\sqrt{\Delta}} \phi_{\lambda_k}^2 ||_{W^{s}}\; \lambda_j^{s} e^{-
\tau \lambda_j} .
$$

To complete the proof it suffices to show that $ e^{\tau
\sqrt{\Delta}} \phi_{\lambda_j} \in W^s(M)$ and $ e^{\tau
\sqrt{\Delta}} \phi_{\lambda_k} ^2 \in W^s(M)$ for some $s \in \R$
as long as $\tau < \tau_{an}(g)$. This is obvious for all $\tau$
for $\phi_{\lambda_j}$ since $e^{\tau \sqrt{\Delta}}
\phi_{\lambda_j}  = e^{\tau \lambda_j} \phi_{\lambda_j}.$  To see
that it  also holds  for $\phi_{\lambda_k}^2$, we note that the
analytic continuation operator $\acal (\tau)$ is given by
\begin{equation} e^{\tau \sqrt{\Delta}} f = \left(U_{\C}(i \tau)
\right)^{-1} \acal(\tau) f. \end{equation} Since $U_{\C}(i \tau)$
is an elliptic Fourier integral operator of finite order by
Theorem \ref{BOUFIO2}, its inverse is an elliptic Fourier integral
of the opposite order. In particular, it is clear that $e^{\tau
\sqrt{\Delta}} f  \in W^s(M)$ for some $s$ if and only if
$\acal(\tau) f \in \ocal^{t} (\partial M_{\tau})$ for some $t$. In
fact, $\acal(\tau) \phi_{\lambda_k}^2$ is real analytic on
$M_{\tau}$ for any $\tau < \tau_{an}(g).$

To go beyond this result, one would need to know the structure of
$\partial M_{\tau_{an}(g)}$ and about the restriction of analytic
continuations of eigenfunctions to it.

\section{\label{ZEROS} Complex zeros of eigenfunctions: Proof of Theorem \ref{ZERORAN}}

The real  distribution of zeros is by definition the measure
supported on the real nodal hypersurfaces  $Z_{\phi_j} = \{x \in
M: \phi_j(x) = 0\}$ defined by
\begin{equation} \langle [Z_{\phi_j}], f \rangle =
\int_{Z_{\phi_j}} f(x) d {\mathcal H}^{n-1}, \end{equation} where
$d{\mathcal H}^{n-1}$ is the $(n-1)$-dimensional Haussdorf measure
 induced by the Riemannian metric of $(M,
g)$.  The complex nodal hypersurface of an eigenfunction is
defined by
\begin{equation} Z_{\phi_{\lambda}^{\C}} = \{\zeta \in
 M_{\tau}: \phi_{\lambda}^{\C}(\zeta) = 0 \}.
\end{equation}
There exists  a natural current of integration over the nodal
hypersurface, given by
\begin{equation}\label{ZDEF}  \langle [Z_{\phi_{\lambda}^{\C}}] , \phi \rangle =  \frac{i}{2 \pi} \int_{ M_{\tau}} \ddbar \log
|\phi_{\lambda}^{\C}|^2 \wedge \phi =
\int_{Z_{\phi_{\lambda}^{\C}} } \phi,\;\;\; \phi \in \dcal^{ (m-1,
m-1)} (M_{\tau}). \end{equation} In the second equality we used
the Poincar\'e-Lelong formula. The notation $\dcal^{ (m-1, m-1)}
(M_{\tau})$ stands for smooth test $(m-1, m-1)$-forms with support
in $M_{\tau}.$ The nodal hypersurface $Z_{\phi_{\lambda}^{\C}}$
also carries a natural volume form $|Z_{\phi_{\lambda}^{\C}}|$ as
a complex hypersurface in a \kahler manifold. By Wirtinger's
formula, it equals the restriction of $\frac{\omega_g^{m-1}}{(m -
1)!}$ to $Z_{\phi_{\lambda}^{\C}}$. Hence, one can regard
$Z_{\phi_{\lambda}^{\C}}$ as defining  the measure
\begin{equation} \langle |Z_{\phi_{\lambda}^{\C}}| , \phi \rangle
= \int_{Z_{\phi_{\lambda}^{\C}} } \phi \frac{\omega_g^{m-1}}{(m -
1)!},\;\;\; \phi \in C_0(M_{\tau}).
\end{equation}
For background we refer to \cite{Z3}. In that article, we proved:
\begin{theo}\label{ZERORANa}  Let $(M, g)$ be any real analytic compact
Riemannian manifold with ergodic geodesic flow. Then
$$\frac{1}{\lambda_{j_k}} [Z_{\phi_{j_k}^{\C}}] \to  \frac{i}{ \pi} \ddbar |\xi|_g,\;\;
 \mbox{weakly in}\;\; \dcal^{' (1,1)} (B^*_{\epsilon} M), $$
 for a full density  subsequence $\{\phi_{j_k}\}$.
\end{theo}

In this section, we show that the same limit formula is valid for
the {\it entire} sequence of eigenfunctions on higher rank locally
symmetric manifolds studied in \cite{AS}.

\subsection{Plurisubharmonic functions}

We put
\begin{equation} \label{DEFS}\left\{ \begin{array}{l} \phi_{\lambda}^{\epsilon} =
\phi_{\lambda}^{\C}|_{\partial M_{\epsilon}} \in H^2(\partial M_{\epsilon})\\ \\
u_{\lambda}^{\epsilon} : =
\frac{\phi_{\lambda}^{\epsilon}(z)}{||\phi_{\lambda}^{\epsilon}
||_{L^2(\partial M_{\epsilon})}} \in H^2(\partial M_{\epsilon})\\ \\
U_{\lambda}(z) : =
\frac{\phi_{\lambda}^{\C}(z)}{||\phi_{\lambda}^{\epsilon}
||_{L^2(\partial M_{\epsilon})}},\;\;\; z \in \partial
M_{\epsilon}.
\end{array} \right.
\end{equation}

Of these, $U_{\lambda}$ will play the central role.  We note that
 $U_{\lambda}$ is
CR holomorphic on $\partial M_{\tau}$. However, the normalizing
factor $||\phi_{\lambda}^{\epsilon} ||_{L^2(\partial
M_{\epsilon})}^{-1}$ depends on $\epsilon$, so $U_{\lambda} \notin
\ocal(M_{\epsilon}). $

\begin{lem} Let $\{\phi_j\}$ be an orthonormal basis of eigenfunctions
on any compact analytic Riemannian manifold $(M, g)$.  Then for
$\tau < \tau_{an}$, $\{\frac{1}{\lambda_j} \log |U_j|^2\}$ is
pre-compact in $L^1(M_{\tau})$: every sequence  has a convergent
subsequence in $L^1(M_{\tau})$.
\end{lem}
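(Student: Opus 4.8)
The plan is to regard $u_j := \tfrac{1}{\lambda_j}\log|U_j|^2$ as a family of plurisubharmonic (PSH) functions on the full tube $M_{\tau_{an}}$ and to invoke the classical $L^1$-compactness theorem for PSH functions on a connected domain: a sequence of PSH functions that is locally uniformly bounded above either tends to $-\infty$ locally uniformly or has a subsequence converging in $L^1_{\mathrm{loc}}$ (see e.g.\ \cite{K} or \cite{GZ}). That each $u_j$ is PSH on $M_{\tau_{an}}$ is immediate: by Definition \ref{MAXGRAU} every $\phi_{\lambda_j}^{\C}$ is holomorphic there, so $\log|\phi_{\lambda_j}^{\C}|^{2}\in PSH(M_{\tau_{an}})\cap L^1_{\mathrm{loc}}$ (its $-\infty$-set, the complex nodal set, has measure zero), and $u_j$ differs from it only by the additive constant $-\tfrac{2}{\lambda_j}\log\|\phi_{\lambda_j}^{\epsilon}\|_{L^2(\partial M_{\epsilon})}$; moreover $u_j\not\equiv-\infty$ since $\phi_{\lambda_j}^{\C}\not\equiv 0$. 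So two things remain: a uniform upper bound, and the exclusion of the $-\infty$ alternative.

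For the upper bound --- the one genuinely quantitative point --- I would use $|\phi_{\lambda_j}^{\C}(\zeta)|^{2}\le\Pi^{\C}_{[0,\lambda_j]}(\zeta,\bar\zeta)$ together with Lemma \ref{COMPARISON} (equivalently Corollary \ref{EASY}) to get $\tfrac{1}{\lambda_j}\log|\phi_{\lambda_j}^{\C}(\zeta)|^{2}\le 2\sqrt{\rho}(\zeta)+O(\tfrac{\log\lambda_j}{\lambda_j})$ uniformly in $\zeta$, and then bound the normalizing denominator from below via $\phi_{\lambda_j}^{\C}|_{\partial M_{\epsilon}}=e^{\epsilon\lambda_j}U_{\C}(i\epsilon)\phi_{\lambda_j}$, Lemma \ref{PSIDOstuff}, and G\"arding's inequality as in \eqref{GARDING}, which yields $\|\phi_{\lambda_j}^{\epsilon}\|_{L^2(\partial M_{\epsilon})}\ge 1$ for $j$ large. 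Hence $u_j(\zeta)\le 2\sqrt{\rho}(\zeta)+O(\tfrac{\log\lambda_j}{\lambda_j})\le 2\tau+1$ on $\overline{M_\tau}$ for large $j$, so $\{u_j\}$ is locally uniformly bounded above on $M_{\tau_{an}}$.

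To exclude $u_j\to-\infty$ locally uniformly, I would anchor on $\partial M_{\epsilon}$: by \eqref{DEFS} the restriction $U_j|_{\partial M_{\epsilon}}$ has unit norm in $L^2(\partial M_{\epsilon},d\mu_{\epsilon})$, so there is $\zeta_j\in\partial M_{\epsilon}$ with $|U_j(\zeta_j)|^{2}\ge\mu_{\epsilon}(\partial M_{\epsilon})^{-1}=:c_0>0$, whence $\sup_{\partial M_{\epsilon}}u_j\ge u_j(\zeta_j)\ge\tfrac{1}{\lambda_j}\log c_0\to 0$ (any sequence drawn from $\{u_j\}$ has $\lambda_j\to\infty$ along it, apart from trivially convergent repeated terms); since $\partial M_{\epsilon}$ is a compact subset of the connected manifold $M_{\tau_{an}}$, no subsequence can tend to $-\infty$ uniformly there. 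The compactness theorem then extracts from any given sequence a subsequence converging in $L^1_{\mathrm{loc}}(M_{\tau_{an}})$, and since $\overline{M_\tau}$ is a compact subset of $M_{\tau_{an}}$ this gives convergence in $L^1(M_\tau)$. The only real obstacle is the uniform upper bound, which rests squarely on the exponential rate $e^{\lambda\sqrt{\rho}}$ of $\Pi^{\C}_{[0,\lambda]}$ furnished by the one-term local Weyl law (Theorem \ref{PTAULWL}) and on a crude lower bound for $\|\phi_{\lambda_j}^{\epsilon}\|_{L^2(\partial M_{\epsilon})}$; everything else is standard pluripotential theory, applied on $M_{\tau_{an}}$ rather than on $M_\tau$ itself precisely so that $\partial M_{\epsilon}$ can serve as the anchor no matter how $\epsilon$ compares with $\tau$.
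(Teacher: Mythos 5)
Your proof is correct and follows essentially the same route as the paper: the Hörmander compactness theorem for (pluri)subharmonic sequences, a uniform upper bound coming from the local Weyl law, and exclusion of the locally uniform $-\infty$ alternative. The only cosmetic difference is that you rule out the $-\infty$ case via the unit normalization of $U_j$ on $\partial M_{\epsilon}$, whereas the paper contradicts it with the G\"arding lower bound \eqref{GARDING} on $\|\phi_j^{\C}\|_{L^2(\partial M_{\tau})}$ — both arguments rest on the same ellipticity of $U(i\tau)^*U(i\tau)$, and yours is if anything slightly more careful about the normalizing constant relating $U_j$ to $\phi_j^{\C}$.
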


\begin{proof}

As in \cite{Z3}, we  use the following fact about subharmonic
functions (see \cite[Theorem~4.1.9]{Ho}):
\medskip

\begin{itemize}

\item {\it Let $\{v_j\}$ be a sequence of subharmonic functions in
an open set $X \subset \R^m$ which have a uniform upper bound on
any compact set. Then either $v_j \to -\infty$ uniformly on every
compact set, or else there exists a subsequence $v_{j_k}$ which is
convergent in $L^1_{loc}(X)$.}

\item {\it If $v$ is subharmonic and $v_j \to v$ weakly in
$\dcal'(M_{\C})$ then $v_j \to v$ in $L^1$. }

\end{itemize}

We note that  $\frac{1}{\lambda_j} \log |\phi_j^{\C}|$ is
plurisubharmonic and  uniformly bounded above on the Grauert tube.
Therefore, it either tends  to $-\infty$ uniformly on compact sets
of the Gruaert tube  or is pre-compact in $L^1$. The first
possibility is ruled out by the fact that it has the form $U(i
\tau)^{\C} \phi_j$ on $\partial M_{\tau}$. Hence,
$$||\phi^{\C}_j||_{L^2(\partial M_{\tau})} = e^{2 \tau \lambda_j} \langle
U(i \tau)^* U(i \tau) \phi_j, \phi_j \rangle_{L^2(M)} \geq e^{2
\tau \lambda_j} \lambda_j^{- \frac{m-1}{2}},
$$ by Garding's inequality (\ref{GARDING}). This contradicts
 the hypothesis that
 $\frac{1}{\lambda_j} \log |\phi_j^{\C}|$ tends to zero uniformly
 on all compact sets, i.e. that
 $|\phi_j^{\C}(\zeta)| \leq e^{- \epsilon_{\tau} \lambda_j}$.

\end{proof}

We thus have two different and independent  types of weak limit
problems:
\begin{itemize}

\item Weak limits of the $L^2$-normalized shell functions $U_j$;

\item Weak limits of $\frac{1}{\lambda} \log |u_j|$.

\end{itemize}

\begin{lem} Suppose that $\{\phi_j\}$ is a sequence of
eigenfunctions with a unique limit measure $d\mu$ and suppose that
$d\mu  = \rho d\mu_L + \nu$ where  $\rho \geq C > 0$ and $\nu \bot
\mu_L$. Then $\frac{1}{\lambda_j} Z_{\lambda_j} \to i \ddbar
|\xi|$.
\end{lem}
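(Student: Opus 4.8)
The plan is to combine the pointwise logarithmic asymptotics from Lemma~\ref{COMPARISON} with the compactness of $\{\frac{1}{\lambda_j}\log|U_j|^2\}$ in $L^1(M_\tau)$ and the Poincar\'e--Lelong formula, following the strategy of \cite{Z3} but using the hypothesis on the limit measure $d\mu$ to pin down the limit along the \emph{entire} sequence. First I would recall that, by the Poincar\'e--Lelong formula \eqref{ZDEF}, $\frac{1}{\lambda_j}[Z_{\phi_{\lambda_j}^\C}] = \frac{i}{\pi}\ddbar\big(\frac{1}{\lambda_j}\log|\phi_{\lambda_j}^\C|\big)$ as currents on $M_\tau$, so it suffices to prove that $\frac{1}{\lambda_j}\log|\phi_{\lambda_j}^\C| \to \sqrt\rho$ in $L^1_{loc}(M_\tau)$, since $\ddbar$ is continuous on currents and $\frac{i}{\pi}\ddbar\sqrt\rho = \frac{i}{\pi}\ddbar|\xi|$ under the identification $E$ of \S\ref{GRAUERT}. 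By the compactness lemma, every subsequence has a sub-subsequence converging in $L^1(M_\tau)$ to some plurisubharmonic $u$ with $u \le \sqrt\rho$ (the upper bound coming from the uniform upper bound $\frac{1}{\lambda_j}\log|\phi_{\lambda_j}^\C| \le \sqrt\rho + o(1)$ of Lemma~\ref{COMPARISON}). So the whole problem reduces to showing every such limit $u$ equals $\sqrt\rho$, and then a standard subsequence argument upgrades this to convergence of the entire sequence.

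To identify $u$, I would work on the boundary $\partial M_\tau$ and use the normalized CR-holomorphic functions $u_j^\tau = \phi_{\lambda_j}^\tau/\|\phi_{\lambda_j}^\tau\|_{L^2(\partial M_\tau)}$. The key point is that the hypothesis $d\mu = \rho\, d\mu_L + \nu$ with $\rho \ge C > 0$ forces, via Lemma~\ref{PSIDOstuff} and G\aa rding's inequality \eqref{GARDING}, the precise normalization $\|\phi_{\lambda_j}^\tau\|_{L^2(\partial M_\tau)} = e^{\tau\lambda_j}\lambda_j^{-(m-1)/4+o(1)}$ --- more carefully, the lower bound from G\aa rding combined with an upper bound $\|\phi_{\lambda_j}^\tau\|_{L^2}^2 = e^{2\tau\lambda_j}\langle U(i\tau)^*U(i\tau)\phi_j,\phi_j\rangle$ and the hypothesis on $d\mu$ pinning the symbol pairing down to a positive constant (here is where $\rho \ge C$ enters, ensuring the mass does not escape). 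Hence $\frac{1}{\lambda_j}\log\|\phi_{\lambda_j}^\tau\|_{L^2(\partial M_\tau)} \to \tau = \sqrt\rho|_{\partial M_\tau}$. On the other hand, $\frac{1}{\lambda_j}\log|u_j^\tau| = \frac{1}{\lambda_j}\log|\phi_{\lambda_j}^\tau| - \frac{1}{\lambda_j}\log\|\phi_{\lambda_j}^\tau\|_{L^2}$, so the weak limit of $\frac{1}{\lambda_j}\log|\phi_{\lambda_j}^\C|$ on $\partial M_\tau$ is the weak limit of $\frac{1}{\lambda_j}\log|u_j^\tau|$ plus $\sqrt\rho$. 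The normalized functions $u_j^\tau$ form an orthonormal sequence in $H^2(\partial M_\tau)$; by the mean value / sub-mean-value property and the fact that any weak-$*$ limit of $|u_j^\tau|^2\,d\mu_\tau$ cannot charge any open set with more than total mass one, one gets that any $L^1$ limit of $\frac{1}{\lambda_j}\log|u_j^\tau|$ is $\le 0$; combined with the reverse inequality coming from Lemma~\ref{COMPARISON} (the full two-sided bound $\frac{1}{\lambda}\log\Pi_{[0,\lambda]} \to 2\sqrt\rho$ forces $\frac{1}{\lambda_j}\log|\phi_{\lambda_j}^\C| \to \sqrt\rho$ \emph{for eigenfunctions saturating the projector}, which is exactly the content of the $\rho \ge C$ hypothesis), one concludes the limit is exactly $0$ on $\partial M_\tau$, hence $u = \sqrt\rho$ there. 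Since a plurisubharmonic function $u \le \sqrt\rho$ on $M_\tau$ that agrees with $\sqrt\rho$ on $\partial M_\tau$ and is maximal must equal $\sqrt\rho$ by the maximality characterization of $\sqrt\rho$ (Definition of $V_{g,M,\tau}$ and the remark that $V_{g,M,\tau} = \sqrt\rho$), we get $u = \sqrt\rho$ throughout $M_\tau$, for every subsequential limit, completing the proof for the whole sequence.

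The main obstacle is the lower bound direction: showing that $\frac{1}{\lambda_j}\log|\phi_{\lambda_j}^\C(\zeta)| \to \sqrt\rho(\zeta)$ (not just $\limsup \le \sqrt\rho$) fails for a general orthonormal sequence --- e.g.\ an eigenfunction that happens to nearly vanish at $\zeta$ --- and is rescued here only by the hypothesis that the limit measure has a uniformly positive absolutely continuous part. The technical heart is therefore to convert ``$d\mu = \rho\,d\mu_L + \nu$ with $\rho \ge C$'' into the statement that the $L^2(\partial M_\tau)$-mass of $\phi_{\lambda_j}^\C$ is spread out enough that $\frac{1}{\lambda_j}\log|\phi_{\lambda_j}^\C|$ cannot drop below $\sqrt\rho$ in $L^1$; this uses the fact that the pushforward of $|\phi_{\lambda_j}|^2\,dV_g$ to $M$ (equivalently, via the microlocal lift and Lemma~\ref{PSIDOstuff}, the mass on $\partial M_\tau$) becomes comparable to Liouville measure in the limit, so by Jensen's inequality / the $L^1$ lower-semicontinuity of $\int \log$ against a measure comparable to $d\mu_L$, the logarithmic averages cannot concentrate downward. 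I expect this to require a careful quantitative version of the argument in the final displayed Lemma's hypotheses, but no genuinely new estimate beyond Theorem~\ref{PTAULWL}, Lemma~\ref{COMPARISON}, Lemma~\ref{PSIDOstuff} and \eqref{GARDING}.
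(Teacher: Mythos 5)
Your overall strategy --- Poincar\'e--Lelong, $L^1$ pre-compactness of $\frac{1}{\lambda_j}\log|\phi_{\lambda_j}^{\C}|$, normalization on $\partial M_{\tau}$ via Lemma \ref{PSIDOstuff} and \eqref{GARDING}, and then using positivity of the Liouville component to identify the limit --- is the same as the paper's (which works with the normalized functions $U_j$ and defers the remaining steps to \cite{Z3}). But two of your steps fail as written. First, the lower-bound mechanism: Lemma \ref{COMPARISON} concerns the full spectral projector and gives no information about an individual eigenfunction, and the hypothesis $\rho \geq C > 0$ is not the statement that the $\phi_{\lambda_j}$ ``saturate the projector.'' Jensen's inequality also points the wrong way: it bounds the average of $\log |U_j|^2$ from \emph{above} by the logarithm of the mass, so positivity of $\int_B |U_j|^2$ does not by itself prevent the logarithmic averages from plunging. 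The correct mechanism --- the one the paper uses --- is the contrapositive via Hartogs' lemma for PSH functions: if an $L^1$ limit of $\frac{1}{\lambda_j}\log|U_j|$ were $\leq -\epsilon$ on some open set, then $\frac{1}{\lambda_j}\log|U_j| \leq -\epsilon/2$ uniformly on compacta of that set for large $j$, hence $\int |U_j|^2 \to 0$ there exponentially fast, contradicting (via the microlocal lift and Lemma \ref{PSIDOstuff}) that the unique quantum limit has everywhere positive Liouville density. Note also that the hypothesis on $d\mu$ is not needed for the normalization asymptotics $\frac{1}{\lambda_j}\log\|\phi_{\lambda_j}^{\tau}\|_{L^2(\partial M_{\tau})} \to \tau$: the G\aa rding bound \eqref{GARDING} together with the boundedness of $U(i\tau)^* U(i\tau)$ already gives this unconditionally.

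Second, your boundary-to-interior step is a genuine gap. Knowing that a subsequential limit $u$ satisfies $u \leq \sqrt{\rho}$ in $M_{\tau}$ and $u = \tau$ on $\partial M_{\tau}$ does not force $u = \sqrt{\rho}$: for instance $2\sqrt{\rho} - \tau$ is PSH, is $\leq \sqrt{\rho}$ on $M_{\tau}$, and has boundary value $\tau$. There is also no reason a subsequential limit should be a \emph{maximal} PSH function, so the appeal to the characterization $V_{g,M,\tau} = \sqrt{\rho}$ has no purchase (and, separately, an $L^1(M_{\tau})$ limit does not obviously restrict to the single hypersurface $\partial M_{\tau}$). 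The paper, following \cite{Z3}, avoids all of this: the Hartogs/mass-positivity argument above is run on every shell $\partial M_{\tau'}$, $0 < \tau' \leq \tau$ (equivalently on arbitrary open subsets of the tube), which identifies $u = \sqrt{\rho}$ almost everywhere in $M_{\tau}$ directly, with no boundary-value or maximality argument. With these two repairs your outline becomes the paper's proof.
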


\begin{proof} We claim that in this case $\frac{1}{\lambda_j} \log |U_j| \to 0.$
Indeed, it is clear that the limsup of the left side is $\leq 0$.
On the other hand, suppose that the limsup is negative on an open
set $U$. Then $\int_U |U_j|^2 \to 0$. This contradicts the
assumption that limit measure has an everywhere positive Liouville
component. The rest of the proof is exactly the same as in
\cite{Z3}.

\end{proof}

To complete the proof of Theorem \ref{ZERORAN}, we recall (from
the introduction) that Theorems 1.8, 1.9, 1.10 of \cite{AS} prove
that for co-compact lattices $\Gamma \subset SL_n(\R)$, any
semi-classical measure has a Haar component of positive weight.
Hence the hypotheses of the Lemmas are satisfied by joint
$\dcal$-eigenfunctions of the locally symmetric spaces.

\end{document}